\newcommand{\F}{\mathbb{F}}
\newcommand{\Q}{\mathbb{Q}}
\newcommand{\Z}{\mathbb{Z}}
\newcommand{\Qbar}{{\overline{\Q}}}
\newcommand{\rhobar}{{\overline{\rho}}}
\newcommand{\calF}{\mathcal{F}}
\newcommand{\calL}{\mathcal{L}}
\newcommand{\calO}{\mathcal{O}}
\newcommand{\fp}{\mathfrak{p}}
\newcommand{\fq}{\mathfrak{q}}
\DeclareMathOperator{\Frob}{Frob}
\DeclareMathOperator{\Gal}{Gal}
\DeclareMathOperator{\Norm}{Norm}
\newcommand{\tors}{{\operatorname{tors}}}
\newcommand{\unr}{{\operatorname{unr}}}
\newcommand{\vv}{\upsilon}
\numberwithin{equation}{section}
\newtheorem{theorem}{Theorem}
\newtheorem{lemma}{Lemma}
\newtheorem{proposition}{Proposition}
\theoremstyle{definition}
\theoremstyle{remark}
\newtheorem{remark}[equation]{Remark}
\definecolor{darkgreen}{rgb}{0,0.5,0}
\begin{document}

\title{A multi-Frey approach to Fermat equations of signature $(r,r,p)$}

\author{Nicolas Billerey}
\address{Universit\'e Clermont Auvergne, CNRS, LMBP, F-63000 Clermont-Ferrand, France.
}
\email{Nicolas.Billerey@uca.fr}

\author{Imin Chen}

\address{Department of Mathematics, Simon Fraser University\\
Burnaby, BC V5A 1S6, Canada } \email{ichen@sfu.ca}

\author{Luis Dieulefait}

\address{Departament d'Algebra i Geometria,
Universitat de Barcelona,
G.V. de les Corts Catalanes 585,
08007 Barcelona, Spain}
\email{ldieulefait@ub.edu}

\author{Nuno Freitas}
\address{
Instituto de Ciencias Matem\'aticas, CSIC,
Calle Nicol\'as Cabrera
13--15, 28049 Madrid, Spain}
\email{nuno.freitas@icmat.es}

\date{\today}

\keywords{Fermat equations, modular method, multi-Frey}
\subjclass[2010]{Primary 11D41; Secondary 11F80, 11G05}

\thanks{N.B. acknowledges the financial support of ANR-14-CE-25-0015 Gardio. The last author was partly supported by the grant {\it Proyecto RSME-FBBVA $2015$ Jos\'e Luis Rubio de Francia}.}

\begin{abstract}
In this paper, we give a resolution of the generalized Fermat equations $$x^5 + y^5 = 3 z^n \text{ and }  x^{13} + y^{13} = 3 z^n,$$ for all integers $n \ge 2$,  and all integers $n \ge 2$ which are not a power of $7$, respectively, using the modular method with Frey elliptic curves over totally real fields. The results require a refined application of the multi-Frey technique, which we show to be effective in new ways to reduce the bounds on the exponents $n$.

We also give a number of results for the equations $x^5 + y^5 = d z^n$, where $d = 1, 2$, under additional local conditions on the solutions. This includes a result which is reminiscent of the second case of Fermat's Last Theorem, and which uses a new application of level raising at $p$ modulo $p$.
\end{abstract}

\maketitle

%\tableofcontents

\section{Introduction}

Wiles' 1995 proof \cite{Wiles} of Fermat's Last Theorem
pioneered a new strategy to attack Diophantine equations, 
now known as {\it the modular method}. 
The strategy, originally due to Frey, Serre, Ribet and Wiles is to attach to a putative solution
of a Diophantine equation an elliptic curve $E$ (known as a Frey elliptic curve), and study the mod $p$
representation attached to $E$ via modularity and level lowering. 
This relates the solution to a modular form of weight $2$ and small level and, to conclude, one needs to show that such relation leads to a contradiction (see Section~\ref{S:Overview} for more details).

The idea of using this same strategy to study variants of FLT goes back to the  work of Serre \cite[Section~4.3]{Serre87} and Ribet \cite{RibetFermat}. Since Wiles' breakthrough, mathematicians have generalized and improved the method and applied it to many other Diophantine equations. In particular, it was natural to use the modular approach to study the \emph{Generalized Fermat Equation}
\begin{equation}
 Ax^p+By^q=Cz^r, \qquad  p,q,r \in \mathbb{Z}_{\geq 2}, \qquad A,B,C \in \Z_{\ne 0}
\label{E:GFE}
\end{equation}
with $A,B,C$ pairwise coprime. This equation is subject of the following conjecture.

{\bf Conjecture.} Fix $A,B,C$ as above.
Over all choices of prime exponents $p,q,r$ satisfying $1/p+1/q+1/r<1$ the 
equation \eqref{E:GFE} admits only finitely many solutions $(a,b,c)$ such that
$abc \ne 0$ and $\gcd(a,b,c)=1$. (Here solutions like $2^3 + 1^q = 3^2$ are counted only once.) 

The only general result towards the above conjecture is a theorem due to Darmon and Granville \cite{DG} which states that if besides $A,B,C$ we also fix the prime exponents $p,q,r$ then there are only finitely many solutions as above. The conjecture is also known to hold in some particular cases including certain infinite families, for which  the authors of this paper have previously made contributions.  Moreover, it is also known that the full conjecture is a consequence of the $ABC$-conjecture (see \cite[Section~5.2]{DG}).

Bennett \cite{BenSki}, \cite{BenVatYaz}, Kraus \cite{kraus2}, \cite{kraus1}  and Siksek  \cite{BuMiSi2}, \cite{BuMiSi1} and their collaborators have developed and clarified the method using Frey elliptic curves over $\Q$. Unfortunately, there is a restrictive set of exponents $(p,q,r)$ which can be approached using the modular method over $\Q$ due to constraints coming from the classification of Frey representations \cite{DG}. As a consequence, attention has now shifted towards using Frey elliptic curves over totally real fields, and is made possible because of advances on the Galois representation side (i.e. modularity results).

In this paper, we establish further cases of the conjecture above based on 
extensions of the modular method to the setting of Hilbert modular forms as 
introduced in the work of the last two authors \cite{DF2}, and powered by the multi-Frey
technique as explained by Siksek in \cite{BuMiSi3}, \cite{BuMiSi4}.  

The results in this paper provide evidence that the multi-Frey technique applied with a `sufficiently rich' set of Frey curves can be used to `patch together' a complete resolution of a one parameter family of generalized Fermat equations. As it will be seen throughout the paper, the multi-Frey technique complements methods used in several steps in the modular method, allowing for refined bounds.

\subsection{Our Diophantine results}

Let $d\geq 1$ be an integer. We are concerned with 
Fermat type equations of the form 
\begin{equation}
  x^r + y^r = dz^p, \qquad xyz \ne 0, \qquad \gcd(x,y,z) = 1
  \label{E:rrp}
\end{equation}
where $r, \; p$ are prime exponents with $r$ fixed and $p$ is allowed to vary.

We say that a solution $(x,y,z)=(a,b,c)$ of equation  \eqref{E:rrp} is {\it
non-trivial} if it satisfies $|abc| > 1$ and we call it {\it primitive} if 
$\gcd(a,b,c) = 1$. In the case
of most interest to us, $d=3$, the condition $|abc| > 1$ is equivalent to
$abc \neq 0$, but it is important to note that for $d=2$ there are also the
extra trivial solutions $\pm (1,1,1)$.

The equation~\eqref{E:rrp} with $r = 5$ and $d = 2, 3$  has already been subject of the papers \cite{Bill}, \cite{BD} and \cite{DF1}, where it was resolved for $3/4$ of prime exponents $p$. For $r= 13$ and $d = 3$, it has been resolved in the papers \cite{DF2}, \cite{FS} under the assumption $13 \nmid z$.

Our main Diophantine results are that we completely solve equation \eqref{E:rrp} for $d = 3$ when $r = 5$ (resp.\ $r = 13$) and $p = n \geq 2$ is any integer (resp.\ $p = n \geq 2$ is any integer which is not a power of $7$). Clearly, this will follow directly from the same statements for prime exponents. More precisely, we will prove the following theorems.
\begin{theorem}\label{T:main5}
For all primes $p$, there are no non-trivial
primitive solutions to
\begin{equation}
\label{E:55p}
    x^5 + y^5 = 3 z^p.
\end{equation} 
\end{theorem}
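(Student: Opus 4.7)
The plan is to apply the modular method with Frey elliptic curves over the real quadratic field $K = \Q(\sqrt{5})$, combined with the multi-Frey technique advertised in the paper to sharpen the elimination step precisely where a single-Frey-curve argument breaks down. The previous papers \cite{Bill}, \cite{BD}, \cite{DF1} already establish Theorem \ref{T:main5} for $3/4$ of prime exponents $p$, so the substantive work consists of closing the remaining proportion, together with handling a handful of small primes $p$ by ad hoc methods (e.g.\ $p=5$ corresponds to $x^5+y^5=3z^5$, settled by descent on the curve cut out by $(x+y)\phi_5(x,y)=3z^5$, with $\phi_5$ the relevant homogeneous quartic, while $p=2,3,7$ reduce to rational points on curves of low genus amenable to classical or elliptic Chabauty).

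Next, to a hypothetical nontrivial primitive solution $(a,b,c)$ with $p$ a large prime, I would attach a Frey elliptic curve $E_1/K$ built from the factorization $a^5+b^5=(a+b)\phi_5(a,b)$ along the lines of Freitas--Dieulefait \cite{DF1}, \cite{DF2}. Invoking modularity of elliptic curves over real quadratic fields together with Fujiwara/Jarvis--Rajaei level lowering, the residual representation $\rhobar_{E_1,p}$ is isomorphic to $\rhobar_{f,\frp}$ for some Hilbert newform $f$ over $K$ at an explicitly predicted small level $\calN_1$. I would then enumerate the relevant newform spaces and attempt to eliminate each candidate $f$ by comparing traces of Frobenius $a_\fq(E_1)$ against $a_\fq(f) \pmod \frp$ at primes $\fq$ of small norm, ranging $(a,b)$ over residue classes mod $\fq$.

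The main obstacle, and the reason multi-Frey is essential here, is the existence of newforms $f$ compatible with the Frey curve $E_1$ at a positive density of exponents $p$, typically forms whose mod-$\frp$ traces can be arranged to match those of $E_1$ for structural reasons (congruences with CM forms, inner twists, or base changes from $\Q$). To kill these, I would introduce a second, genuinely independent Frey curve $E_2/K$ attached to the same putative solution via a different auxiliary construction (for instance a Kraus-style twist exploiting the $3$ in $3z^p$, or a Frey curve associated to a different factor of an auxiliary identity), producing an independent congruence $\rhobar_{E_2,p} \simeq \rhobar_{g,\frp'}$ at a distinct level $\calN_2$. Simultaneous compatibility of a surviving pair $(f,g)$ with $(E_1,E_2)$ at a single well chosen test prime $\fq$ then forces $p$ to divide a specific small integer built from the mismatched traces, a contradiction once $p$ has been pushed past the explicit threshold.

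The hardest step will be arranging this patchwork so that the union of residue classes of $p$ covered by the individual eliminations (each Frey curve contributing eliminations outside some arithmetic progression) exhausts \emph{every} large prime: in particular one must handle exponents for which one of the $E_i$ degenerates (additive reduction at a prime not dividing the minimal discriminant of the other, or $\rhobar_{E_i,p}$ reducible/dihedral), and the residual newforms that survive both $E_1$ and $E_2$ must themselves be excluded by introducing a third Frey curve or by an image-of-inertia argument at the prime $5$ (or $\sqrt{5}$). Managing the interplay between the three or more Frey curves and choosing the correct auxiliary primes $\fq$ so that no congruence class of $p$ slips through all nets is the delicate combinatorial and computational core of the argument.
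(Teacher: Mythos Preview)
Your general framework is right---multi-Frey over $\Q(\sqrt{5})$ together with a second Frey curve---but you have misidentified both the second curve and the mechanism that closes the gap, and as a result you overestimate the difficulty considerably.

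The paper's second Frey curve is not a new construction over $K=\Q(\sqrt{5})$ but the classical Darmon curve $W_{a,b}/\Q$ already studied in \cite{Bill,BD}. Its role is limited: the earlier analysis of $W$ (Proposition~\ref{L:BD}) shows that any nontrivial primitive solution with $p\ge 5$ must satisfy $5\nmid a+b$. The remaining work is done entirely with the single curve $E_{a,b}/\Q(\sqrt{5})$, and the decisive observation you are missing is elementary: because $3\mid d$, one has $3\mid a+b$, and this forces the reduction of $E_{a,b}$ at the prime above $3$ to be a fixed curve with $a_3(E_{a,b})=6$. After level lowering the only surviving newforms at level $2^6$ in the case $5\nmid a+b$ are (twists of) $E_{1,0}$ and $E_{1,1}$, all of which have $a_3=4$. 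Hence $p\mid 6-4=2$, a contradiction for $p\ge 7$. That is the whole elimination step (Proposition~\ref{contra2}); no patchwork over residue classes of $p$, no third Frey curve, no image-of-inertia argument is needed here.

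So the ``hardest step'' you describe---arranging nets so that no congruence class of $p$ escapes---does not arise. The argument is not a density-$3/4$ result patched by further congruences; it is a clean contradiction $5\mid a+b$ versus $5\nmid a+b$ obtained from two different Frey curves. Your description of the small primes is also off: $p=2,3,5$ are dispatched by citing \cite{BenSki}, \cite{BenVatYaz}, and Dirichlet \cite{Di28}, not by Chabauty or descent.
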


\begin{theorem} 
For all primes $p \not= 7$, there are no non-trivial primitive solutions to
\begin{equation}
\label{E:1313p}
  x^{13} + y^{13} = 3 z^p.
\end{equation}
\label{T:main13}
\end{theorem}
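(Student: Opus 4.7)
The plan is to combine the partial resolution of \cite{DF2} and \cite{FS} in the case $13 \nmid z$ with a refined multi-Frey treatment of the remaining case $13 \mid z$. First I would reduce to prime exponents: if $n \geq 2$ is not a power of $7$, then $n$ admits a prime divisor $p \neq 7$, and any primitive solution of $x^{13}+y^{13}=3z^n$ yields a primitive solution of $x^{13}+y^{13}=3z^p$. So it suffices to treat prime exponents $p \neq 7$, which I would split into small primes $p \in \{2,3,5,11,13\}$ and large primes $p \geq 17$, $p \neq 7$.

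For the small exponents $p \in \{2,3\}$, I would proceed by Chabauty-type arguments on the corresponding superelliptic curve of small genus, and for $p \in \{5,11,13\}$ by finite Hilbert newform enumeration at small level over $K = \Q(\zeta_{13})^+$ combined with an explicit elimination via trace of Frobenius at a few auxiliary primes.

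For primes $p \geq 17$, $p \neq 7$, I would introduce the standard Frey elliptic curve $E/K$ attached to a putative primitive solution $(a,b,c)$, as in \cite{FS} and \cite{DF2}. By modularity and level lowering, $\overline{\rho}_{E,p}$ is isomorphic to $\overline{\rho}_{\boldf,\mathfrak{p}}$ for some Hilbert newform $\boldf$ of parallel weight $2$ over $K$, with level explicitly computable and depending on whether $13 \mid z$. When $13 \nmid z$ the results of \cite{DF2} and \cite{FS} already rule out any such $\boldf$. When $13 \mid z$, the level of $\boldf$ is divisible by the unique prime $\mathfrak{q}$ of $K$ above $13$, and the candidate set of newforms is non-empty; here I would invoke the multi-Frey technique advertised in the abstract, introducing an auxiliary Frey elliptic curve — for instance a second Frey over $K$ from a different twist in Darmon's construction, or one defined over a proper subfield of $K$ exploiting the $3 \cdot 13$ divisibility of the right-hand side — and compare the two resulting mod $p$ representations via trace-of-Frobenius congruences at carefully chosen auxiliary primes to discard every surviving newform.

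The principal obstacle is the $13 \mid z$ case: a single Frey curve leaves a non-trivial pool of Hilbert newforms surviving level lowering, and the multi-Frey comparison must yield congruences strong enough to eliminate every one for all primes $p \geq 17$, $p \neq 7$ simultaneously. The exponent $p = 7$ is expected to persist as an unavoidable exception precisely because one of the surviving newforms $\boldf$ gives rise to a mod $7$ representation genuinely isomorphic to $\overline{\rho}_{E,7}$, a congruence which the multi-Frey method alone cannot break.
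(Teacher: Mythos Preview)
Your broad outline---split off small primes, invoke \cite{DF2,FS} for $13 \nmid c$, then deploy multi-Frey for $13 \mid c$---has the right shape, but your conception of how the multi-Frey step actually works misses the paper's key architectural idea, and as written your plan would stall on a computational obstruction you have not acknowledged.

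The paper does not use two Frey curves to compare their mod~$p$ representations against each other at auxiliary primes. Instead, it uses the first Frey curve $E$ over $\Q(\sqrt{13})$ (not $\Q(\zeta_{13})^+$) together with a refined image-of-inertia argument to establish two \emph{local} constraints on the solution: $13 \mid a+b$ and, crucially, $4 \mid a+b$ (in fact $\upsilon_2(a+b)\ge 3$); this is Theorem~\ref{T:4nmidaplusb}. The $2$-adic constraint is obtained by working over a carefully chosen degree-$8$ extension of $\Q(\sqrt{13})$ and comparing conductor exponents of $E$ and $E_{1,-1}$ at the prime above~$2$ (Version~3 of the inertia argument in Section~\ref{S:inertia}). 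Only then does the paper switch to the second Frey curve $F$, defined over the \emph{cubic} subfield $K$ of $\Q(\zeta_{13})$. The point is that the constraints $3 \mid a+b$, $13 \mid a+b$, $\upsilon_2(a+b)\ge 3$ force the level-lowered level of $F$ to be exactly $2\cdot 3\cdot \mathfrak{q}_{13}$, whose new cuspidal subspace has dimension~$181$. Without the $2$-adic constraint, the $2$-part of the level would be $2^3$ or $2^4$, and the paper explicitly records that the relevant spaces then have dimensions ranging from roughly $10^4$ to over $10^6$---completely infeasible. Your proposal's ``compare two mod~$p$ representations via trace-of-Frobenius congruences'' does not explain how you would ever get to a space small enough to compute.

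A few secondary points: the cases $p=2,3$ are dispatched by citing \cite{BenSki} and \cite{BenVatYaz} (modular method with other Frey curves), not Chabauty; $p=13$ is handled by Serre \cite{Serre87}; and $p=5,11$ are not treated separately but fall out of the same argument over the cubic field, with a refined elimination at those two primes. Finally, the $p=7$ obstruction arises already at the $\Q(\sqrt{13})$ stage (Proposition~\ref{P:iso2}): an extra Hilbert newform $g$ with coefficient field $\Q(\sqrt{2})$ survives, apparently because $\overline{\rho}_{g,\mathfrak{p}_7}\cong\overline{\rho}_{E_{1,-1},7}$, which blocks the derivation of the $4\mid a+b$ constraint for that exponent.
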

In the previous papers concerning equations~\eqref{E:55p} and \eqref{E:1313p}, the main tool used was the modular method, where the Frey elliptic curves were obtained by exploiting the factorization over $\Q(\zeta_r)$ (for $r=5$ or $r=13$) of the left-hand side of each equation. More generally, in the work of the last author \cite{F}, for each $r \geq 5$, several Frey elliptic curves defined over real subfields of $\Q(\zeta_r)$ are attached to equation \eqref{E:rrp}. Our proofs of Theorems~\ref{T:main5} and \ref{T:main13} build on these previous works and are made possible by introducing new multi-Frey techniques.

In particular, we show how the multi-Frey technique can be used to obtain tight bounds on the exponent $p$, improve bounds coming from Mazur-type irreducibility results (see Theorem~\ref{T:irreducibilityF}), and move to another level where the required computations of Hilbert modular forms is within the range of what is currently feasible (see paragraph after Lemma~\ref{L:Fconductors}). We also need a refined `image of inertia argument' (see Section~\ref{S:inertia}) for the elimination step of the modular method.

A major obstruction to the success of the modular method for solving \eqref{E:rrp} for $d=1,2$ 
is the existence of trivial solutions like $(1,0,1)$, $(1,-1,0)$ or $(1,1,1)$. Indeed, when the Frey elliptic curve evaluated at a trivial solution is non-singular then its corresponding (via modularity) newform will be among the newforms after level lowering; in particular, the mod $p$ representations of the Frey curve and a newform can be isomorphic, requiring the use of global methods to distinguish Galois representations which are uniform in $p$.

It is sometimes possible to resolve equation $\eqref{E:rrp}$ by assuming additional $q$-adic conditions to avoid the obstructing trivial solutions. Indeed, we will prove a number of partial results for the equation \eqref{E:rrp} with $r = 5$ and $d=1,2$ under certain $q$-adic conditions.

For example, we will prove the following result resembling the second case of Fermat's Last Theorem. Its proof involves a new application of the condition for level raising at $p$ modulo~$p$.
\begin{theorem}
For all primes $p$, the equation 
\begin{equation*}
x^5 + y^5 = dz^p,\qquad\text{with $d\in\{1,2\}$}
\end{equation*}
has no non-trivial solutions $(a,b,c)$ satisfying~$p \mid c$. 
\label{T:second case}
\end{theorem}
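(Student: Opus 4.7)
The plan is to assume there exists a non-trivial primitive solution $(a,b,c)$ to $x^5+y^5=dz^p$ with $d\in\{1,2\}$ and $p\mid c$, and to derive a contradiction from a level-raising-at-$p$-modulo-$p$ obstruction. Attach to $(a,b,c)$ a Frey elliptic curve $F=F(a,b,c)$ defined over $K=\Q(\sqrt{5})$ from the family studied in \cite{F}. Since $c^p$ enters the discriminant of a minimal model of $F$ with fixed multiplicity, at any prime $\mathfrak{p}\mid p$ of $K$ the valuation $v_\mathfrak{p}(\Delta(F))$ is divisible by $p$; hence $F$ has multiplicative reduction at $\mathfrak{p}$ and, by Tate's description of the image of inertia, $\bar\rho_{F,p}$ is unramified at $\mathfrak{p}$. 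The prime-to-$p$ part of its Serre conductor is an explicit small ideal $\mathfrak{N}$ of $K$ depending only on $d$ and on the chosen Frey curve.

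By the modularity of elliptic curves over real quadratic fields, the irreducibility of $\bar\rho_{F,p}$ (Theorem~\ref{T:irreducibilityF}), and Fujiwara--Jarvis--Rajaei level lowering, there exists a Hilbert newform $f$ of parallel weight $2$ and level $\mathfrak{N}$ over $K$, together with a prime $\mathfrak{P}\mid p$ of its coefficient field $K_f$, such that $\bar\rho_{F,p}\simeq\bar\rho_{f,\mathfrak{P}}$. Now comes the novel ingredient: because $F$ has multiplicative reduction at $\mathfrak{p}\mid p$, while $f$ has good reduction at $\mathfrak{p}$ (since $\mathfrak{p}\nmid\mathfrak{N}$), comparing traces of Frobenius on both sides of the isomorphism yields
\begin{equation*}
a_\mathfrak{p}(f)\equiv\pm\bigl(N(\mathfrak{p})+1\bigr)\pmod{\mathfrak{P}},
\end{equation*}
which is the level-raising-at-$\mathfrak{p}$-modulo-$p$ congruence for the (good-reduction) form $f$. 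Since the Weil bound forces $a_\mathfrak{p}(f)\neq\pm(N(\mathfrak{p})+1)$ as algebraic integers, $p$ must divide the nonzero rational integer $N_{K_f/\Q}\bigl(a_\mathfrak{p}(f)\mp(N(\mathfrak{p})+1)\bigr)$, giving a finite, explicit bound on $p$ for each $f$.

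To finish, I would compute the (short) space of Hilbert newforms of level $\mathfrak{N}$ over $K$, apply the congruence above at one or two auxiliary primes $\mathfrak{p}$ to reduce to a finite list of pairs $(f,p)$, and eliminate the residual pairs either by invoking a second Frey curve from the multi-Frey family of \cite{F} or by the refined image-of-inertia argument of Section~\ref{S:inertia}. The principal obstacle I anticipate is the newforms $f$ of level $\mathfrak{N}$ whose mod~$p$ representations coincide with those of Frey curves attached to the trivial solutions like $(1,-1,0)$ or $\pm(1,1,1)$; these are the forms that classically block the modular method for $d=1,2$. The key point is that at a trivial solution we do \emph{not} have $p\mid c$, so the hypothesis of the theorem guarantees that the level-raising congruence above is a genuine restriction on $p$ even for those obstructive forms, and verifying it cleanly for each such form is the delicate computational heart of the argument.
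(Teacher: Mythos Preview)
Your proposal has a genuine gap at the central step. You assert that because $p\mid v_{\mathfrak p}(\Delta(F))$ and $F$ has multiplicative reduction at~$\mathfrak p\mid p$, the residual representation $\bar\rho_{F,p}$ is \emph{unramified} at~$\mathfrak p$, and then you ``compare traces of Frobenius'' there. This reasoning is only valid at primes of residue characteristic $\ne p$. At a prime above~$p$, Tate's uniformisation gives $\bar\rho_{F,p}|_{G_{K_{\mathfrak p}}}$ as an extension of the trivial character by the mod~$p$ cyclotomic character, and the latter is ramified at~$p$. So $\bar\rho_{F,p}$ is not unramified at~$\mathfrak p$, there is no well-defined Frobenius trace to compare, and the congruence $a_{\mathfrak p}(f)\equiv\pm(N(\mathfrak p)+1)\pmod{\mathfrak P}$ does not follow from your argument. (Separately, Theorem~\ref{T:irreducibilityF} concerns the $r=13$ Frey curve over the cubic subfield of $\Q(\zeta_{13})$; it is not the irreducibility input for a curve over $\Q(\sqrt5)$.)

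The paper proceeds differently and more simply. It uses the Frey curve $W=W_{a,b}$ over~$\Q$ (not over $\Q(\sqrt5)$), so after level lowering one lands among classical newforms of level $50$, $200$ or $400$, all of which correspond to rational elliptic curves. The hypothesis $p\mid c$ means that the congruent form $g$ must \emph{raise level} at~$p$: this is where the new ingredient Proposition~\ref{P:levelraising} enters, and its proof is $p$-adic Hodge theory (Breuil, Breuil--M\'ezard), not a Frobenius-trace comparison. The correct necessary condition is $a_p(g)\equiv\pm1\pmod p$, which by the Hasse bound forces $a_p(g)=\pm1$. The endgame is then a clean parity argument: $W$ has a rational $2$-torsion point, so any congruent elliptic curve with a rational $2$-torsion point has even $a_p$, contradicting $a_p=\pm1$; the few curves without rational $2$-torsion are eliminated by comparing $a_3$. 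Your proposed computational elimination over $\Q(\sqrt5)$ with a second Frey curve is therefore unnecessary, and the substantive point you are missing is precisely the local-at-$p$ analysis that yields Proposition~\ref{P:levelraising}.
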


In addition, we will use the multi-Frey technique to prove the following result, which was known in the case $d=1$ by work of Billerey (\cite[Th\'eor\`eme~1.1]{Bill}) and Dahmen-Siksek (\cite[Proposition~3.3]{DahSik}) using the Frey curve introduced in section~\ref{S:overQ}.
\begin{theorem}
For all primes $p$, the equation 
\begin{equation*}
x^5 + y^5 = dz^p 
\end{equation*}
has no non-trivial solutions $(a,b,c)$ in each of the following situations~:
\begin{itemize}
 \item[(i)] $d = 1,2$ and $5 \mid c$ or,
 \item[(ii)] $d=1$ and $c$ even or,
 \item[(iii)] $d=2$ and $c$ even.
\end{itemize}
\label{T:partial}
\end{theorem}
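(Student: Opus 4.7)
My plan is to run the modular method with several Frey curves in parallel, using the three local conditions in (i)--(iii) to force the conductor of the relevant Frey curves into spaces of modular forms small enough to analyze explicitly, and then to play the surviving forms off against each other (multi-Frey) to rule them out.

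First I would recall the Frey elliptic curve $E/\Q$ attached in Section~\ref{S:overQ} to a putative primitive solution $(a,b,c)$ of $x^5+y^5=dz^p$ (coming from the factorization $x^5+y^5=(x+y)\Phi_5(x,y)$), together with the Frey curves $F$ defined over the real subfield $K=\Q(\sqrt{5})$ of $\Q(\zeta_5)$ introduced in \cite{F} and used already in the proof of Theorem~\ref{T:main5}. For $E/\Q$, modularity is classical; for $F/K$, modularity of elliptic curves over real quadratic fields is available. In each of the cases (i)--(iii), I would tabulate how the extra divisibility condition ($5\mid c$ or $2\mid c$) affects the conductors: for $E/\Q$ the condition $5\mid c$ forces multiplicative reduction at $5$ with $p\mid v_5(\Delta_E)$, while the parity condition pins down the reduction type at $2$ depending on $d$. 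Similarly for $F/K$, one refines Lemma~\ref{L:Fconductors} under the new hypotheses to obtain a predicted conductor $\NN$ with small, explicitly computable norm.

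Having fixed the predicted conductors I would apply Ribet-type level lowering (and its Hilbert-modular analogue of Fujiwara--Jarvis--Rajaei) to conclude that $\bar\rho_{E,p}$ (resp.\ $\bar\rho_{F,\fp}$) arises from a classical (resp.\ Hilbert) newform of weight $2$ at a level $N_0$ (resp.\ $\NN_0$) independent of $p$, after checking the irreducibility of the mod-$p$ representation via Theorem~\ref{T:irreducibilityF} combined with Mazur's theorem for $E/\Q$. For each case (i)--(iii) I would enumerate the resulting finite list of newforms. In case (i) the condition $5\mid c$ kills ramification of $F$ at one of the primes above $5$, giving a very small level over $K$; in case (ii) the Frey curve $E/\Q$ already handled by \cite{Bill,DahSik} lowers to a manageable level; in case (iii) the condition $c$ even combined with $d=2$ again produces a small level for $F/K$ (and for $E/\Q$ after twisting).

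The elimination step is where the multi-Frey technique enters. For each surviving newform $f$ coming from one Frey curve, I would compute the traces $a_{\fq}(f)$ at a handful of small auxiliary primes $\fq$ and compare with the constraints on $a_{\fq}(E)$ or $a_{\fq}(F)$ obtained by running over all residues of $(a,b,c)\bmod\fq$ compatible with the equation and the local condition; when a single Frey curve leaves a form standing (typically because it matches the Galois representation of the Frey curve evaluated at a trivial solution like $(1,-1,0)$ or $(1,1,1)$), I would switch to the second Frey curve and check that the same $(a,b,c)\bmod\fq$ is inconsistent with the congruences forced by the newform(s) predicted there. For case (ii) this reduces to the argument already in \cite{Bill,DahSik}; cases (i) and (iii) are new, and I would combine $E/\Q$ with $F/K$ to obtain the cross-elimination.

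The main obstacle, as always for $d=1,2$, is the presence of trivial solutions ($(1,-1,0)$ when $d=1$, and $(1,1,1)$ or $(1,-1,0)$ when $d=2$): the associated newforms survive any purely local elimination at a single Frey curve and a single auxiliary prime $\fq$. The multi-Frey step is essential precisely here, because a trivial solution produces a compatible congruence in one Frey curve but usually fails to simultaneously match the predicted form coming from a second Frey curve of a different geometric origin. Verifying that, under conditions (i)--(iii), the trivial solutions are genuinely excluded by the local conditions (e.g.\ $5\nmid 0$, $2\nmid 1$) and that the multi-Frey matching breaks for every small auxiliary prime one uses, is the delicate computational heart of the argument.
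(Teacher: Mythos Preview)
Your outline has a genuine gap in the elimination step. After level lowering, the surviving newforms are exactly those attached to the Frey curves evaluated at trivial points: for the curve $E$ over $\Q(\sqrt 5)$ (used when $5\nmid a+b$) these are $E_{1,0}$, $E_{1,1}$ and their quadratic twists; for $F$ (used when $5\mid a+b$) they are $F_{1,-1}$ and its twist by $2$. Trace comparison at an odd auxiliary prime $q$ cannot rule these out under the $2$-adic hypothesis of (ii) or (iii): by the Chinese Remainder Theorem there exist $(a,b)$ with $8\mid a+b$ and $(a,b)\equiv(1,0)$ (or $(1,1)$) modulo $q$, so the Frey trace matches the obstructing form exactly, and the same trivial pair also yields a non-singular curve for the rational Frey curve $W$, so switching curves does not help. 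What is actually needed is the image-of-inertia argument of Section~\ref{S:inertia}: in cases (ii)--(iii) one base-changes $E_{a,b}$ and each candidate $Z\in\{E_{1,0},E_{1,1},\dots\}$ to a suitable subfield of the $3$-division field of $Z$ and checks that $Z$ acquires good reduction at the prime above $2$ while $E_{a,b}$ does not whenever $8\mid a+b$ (Proposition~\ref{P:fieldsAt2r5}); this distinguishes the inertial fields at $2$ uniformly in $p$. In case (i) a multi-Frey step does enter, but in a specific way your sketch misses: comparing $a_3(F_{a,b})$ with $a_3(F_{1,-1})=4$ first forces $3\mid a+b$, whence $W=W_{a,b}$ is multiplicative at $3$; level lowering at $3$ then gives $p\mid(3+1)^2-a_3(g)^2$, so only $p=7$ remains, and that case is killed by image of inertia at $5$ (the four candidate rational newforms have potentially good reduction at $5$, whereas $W$ has potentially multiplicative reduction there since $5\mid a+b$).

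Two cross-reference slips: Theorem~\ref{T:irreducibilityF} and Lemma~\ref{L:Fconductors} belong to the $r=13$ analysis over the cubic subfield of $\Q(\zeta_{13})$; for $r=5$ the relevant inputs are Proposition~\ref{irreducible5} and Lemmas~\ref{lem:bad_reduction_55p_E_and_FI}--\ref{lem:bad_reduction_55p_E_and_FII}.
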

We remark that, in all our theorems, to deal with certain small primes, we invoke references where the results are obtained using Frey elliptic curves different from the ones used in this paper; this is another instance of the multi-Frey technique.

The computations required to support the proof of our main theorems were performed using {\tt Magma} \cite{magma}. The program files are provided with this paper and we refer to \cite{programs} whenever an assertion involves a computation in {\tt Magma} from one of these programs.

\subsection*{Acknowledgments}
We thank Christophe Breuil for discussions concerning Proposition~\ref{P:levelraising}. 
We also thank Peter Bruin for helpful conversations regarding Remark~\ref{R:example},
Maarten Derickx and Michael Stoll for a conversation about \cite{smallTorsion}, and Enrique Gonzalez Jimenez for helpful suggestions. We are grateful to the referee for a careful reading and helpful remarks.

\section{Overview of the multi-Frey modular method}

\label{S:Overview}

\noindent {\bf Notation:} Let $\Qbar$ be an algebraic closure of~$\Q$ and let $p$ be a prime number. 
For a totally real subfield $K$ of~$\Qbar$, we write $G_K = \Gal(\Qbar /K)$ for its absolute Galois group.
For a prime $\ell$ of $K$ we write $I_\ell$ for an inertia subgroup at $\ell$ in $G_K$.
Given $E$ an elliptic curve defined over~$K$, we denote by
$\rhobar_{E,p}$ the representation giving the action of~$\Gal(\Qbar/K)$ on the $p$-torsion points of~$E$.
For a Hilbert modular form $f$ defined over $K$ and a prime ideal $\fp$ in its field of coefficients~$\Q_f$, we write $\rhobar_{f,\fp}$ for the mod~$\fp$ Galois representation attached to $f$; when $K=\Q$ we get classical modular forms.

We now recall the main steps of the modular method.
\begin{itemize}
\item[]{\bf Step 1: Constructing a Frey curve.} Attach a Frey elliptic curve
$E/K$ to a putative solution of a Diophantine equation, where $K$ is a
totally real field. A Frey curve $E/K$ has the property that the Artin conductor of $\rhobar_{E,p}$ is bounded independently of the putative solution.
\item[]{\bf Step 2: Irreducibility.} Prove the irreducibility of 
$\overline{\rho}_{E,p}$.
\item[]{\bf Step 3: Modularity.} Prove the modularity of $E/K$, and hence modularity of $\rhobar_{E,p}$.
\item[]{\bf Step 4: Level lowering.} Use level lowering theorems, which require irreducibility of $\rhobar_{E,p}$, to conclude that $\overline{\rho}_{E,p} \cong \rhobar_{f,\fp}$ where $f$ is a Hilbert
newform over $K$ of parallel weight 2, trivial character, and level among
finitely many explicit possibilities $N_i$ and~$\fp$ is a prime ideal  above~$p$ in the field of coefficients~$\Q_f$ of~$f$.
\item[]{\bf Step 5: Contradiction.} Compute all the Hilbert
newforms predicted in Step 4 and show that $\overline{\rho}_{E,p} \not\cong
\rhobar_{f,\fp}$ for all of them. This typically uses various methods to distinguish local Galois representations.
\end{itemize}
In current applications of the modular method, the most
challenging step is often Step~$5$, contrasting with the proof
of Fermat's Last Theorem (the origin of the modular method) 
where the big issue was modularity. Indeed, in the proof of FLT
we have $K = \Q$ and in Step~$4$ there is only one level $N_1= 2$; 
since there are no newforms at this level 
we get directly a contradiction in Step~$5$. 
In essentially every other application of the method, there are candidates for $f$, therefore more work is needed to complete the argument, namely Step~$5$. 
It is now convenient for us to divide Step 5 into two substeps.
\begin{itemize}
 \item[]{\bf Step 5a: Computing newforms.} Compute all the Hilbert newforms of parallel weight 2, trivial character and levels $N_i$ predicted in Step~$4$.
 \item[]{\bf Step 5b: Discarding newforms.} For each newform $f$ computed in Step 5a and each prime ideal~$\fp$ above~$p$ in its field of coefficients show that $\overline{\rho}_{E,p} \not\cong
\rhobar_{f,\fp}$.
\end{itemize}
With the objective of succeeding more often in Step 5, Siksek introduced the {\bf multi-Frey technique} in \cite{BuMiSi3} and \cite{BuMiSi4}. This is a variant of Step~$1$ where more than one Frey curve is used simultaneously in order to put more restrictions on the putative solutions, thereby increasing the likelihood of a contradiction in Step~5b.

It is a common assumption in discussions about the modular method found in the literature
that Step 5a can be completed. We want to stress that more recently Step 5a is becoming a real obstruction to the method. This computational obstruction was not noticed in initial applications since they only required small (even empty) spaces of newforms over $\Q$ which were easily accessible. However, when working over totally real fields this is no longer the case as the dimensions of spaces of Hilbert cusp forms grow very fast. 

Besides the Diophantine results mentioned in the Introduction, one of the
underlying themes of this paper is to illustrate 
that the multi-Frey approach is a powerful and versatile tool 
with applications at various stages of the modular method.
Indeed, in the proofs of our main results, we will use it to circumvent 
challenges in Steps 2, 5a, and 5b.

{\bf Notation.} Let~$E$ be an elliptic curve defined over a totally real field~$K$ and let~$q$ be a rational prime such that~$E$ has good reduction at each prime ideal~$\fq$ dividing~$q$ in~$K$. For~$f$ a Hilbert newform over~$K$ of parallel weight~$2$ and trivial character, define
\[
B_q(E,f)=\gcd\left( \left\{ \Norm\left( a_{\fq}(E)-a_{\fq}(f)\right): \fq\mid q \right\} \right)
\]
where $\fq$ runs through the prime ideals above~$q$ in~$K$. Here~$a_{\fq}(f)$ denotes the $\fq$-th Fourier coefficient of~$f$ and \(a_{\fq}(E)=\#\F_{\fq}+1-\#\widetilde{E}(\F_{\fq})\) where $\F_{\fq}$ is the residual field at~$\fq$ and~$\widetilde{E}$ denotes the reduction of~$E$ modulo~$\fq$.

If for some prime ideal~$\fp$ above~$p$ in the coefficient field of~$f$ we have $\overline{\rho}_{E,p} \cong \rhobar_{f,\fp}$, then by considering the trace of Frobenius elements at each prime ideal above~$q$ on both sides, we get that $p$ divides~$q B_q(E,f)$. 

Throughout the paper, we write $\vv_{\fq}(a)$ for the valuation at the prime ideal~$\fq$ of the ideal generated by~$a\in K$.

\section{The image of inertia argument}
\label{S:inertia}

In this section we recall and generalize the `image of inertia argument'. 
This technique, originated in \cite{BenSki}, is used
to distinguish local Galois representations in Step 5b of the modular method. 
We start with the well known version,  and then provide two generalizations. 
All three versions are used later in the paper.

Let $L$ be a finite extension of $\Q_\ell$ contained in some fixed algebraic closure~$\Qbar_\ell$ of~$\Q_\ell$. Let
$E/L$ be an elliptic curve with potentially good reduction. Let $m \in
\Z_{\geq 3}$ be coprime to $\ell$ and consider the {\it inertial field of $E$}
given by $L_E = L^{un}(E[m])$, where $L^{un}$ is the maximal unramified extension of $L$ in~$\Qbar_\ell$.
The extension $L_E / L^{un}$ is independent of $m$ and it is the minimal extension of $L^{un}$ where $E$ achieves good reduction.

Suppose that, for a prime $p \not= \ell$, we have
\begin{equation}
  \overline{\rho}_{E,p} \cong \rhobar_{Z,p}, 
  \label{E:inertiaArg}
\end{equation}
where $E$ and $Z$ are elliptic curves over the local field $L$ and let $I_L$ denote the inertia subgroup of $L$.
In our applications below $E$ and $Z$ will be defined over a totally
real number field $K$ and $L$ will be the completion of $K$ at
some prime of $K$ above $\ell$. Moreover, $E$ will be a Frey elliptic curve and
$Z$ an elliptic curve corresponding to a (Hilbert) newform
with rational coefficients, as predicted in Step 4 of the modular method. 
The objective of the inertia argument is to obtain a contradiction to \eqref{E:inertiaArg},
thereby establishing
\begin{equation}
  \overline{\rho}_{E,p} \not\cong \rhobar_{Z,p}, 
  \label{E:inertiaArgNot}
\end{equation}
as required in Step 5b. We will now describe the three versions,
each version generalizing the previous one.

{\bf Version 1: different inertia sizes.} 
Show that $\# \rhobar_{E,p}(I_L) \ne \# \rhobar_{Z,p}(I_L)$; 
this clearly implies \eqref{E:inertiaArgNot}. This is effective when 
one curve has potentially good reduction and the other has 
potentially multiplicative reduction, and is the original version 
which has been used in many papers applying the modular method.

{\bf Version 2: the field of good reduction.} 
Suppose both $E$ and $Z$ have potentially good reduction.
Note that the inertial field $L_E$ corresponds to the field fixed
by the restriction $\rhobar_{E,p}|_{I_L}$ and that
isomorphism \eqref{E:inertiaArg}
implies $\overline{\rho}_{E,p}|_{I_L} \cong \rhobar_{Z, p}|_{I_L}$. Then
the inertial fields of $E$ and $Z$ must be the same. Therefore, even when
$\# \rhobar_{E,p}(I_L) = \# \rhobar_{Z,p}(I_L)$ (i.e.\ version 1 fails) we can still 
establish \eqref{E:inertiaArgNot} by showing that $L_E  \ne L_Z$ (working in a fixed algebraic closure of $L$). 

In practice, this is achieved by finding an extension $M/L$ 
where $Z$ has good reduction and $E$ does not. 
Indeed, consider the compositum $M' =L^{un} M$, which is an unramified extension of $M$.
Therefore, the type of reduction of $E$ and $Z$ over $M'$ is the same as over $M$. 
Since $Z/M'$ has good reduction by minimality of $L_Z$, it follows $L_Z \subset M'$; since $E/M'$ does not have good reduction, we have $L_E \not\subset M'$, and hence $L_E \neq L_Z$. We note that (when both curves have potentially good reduction) version 1 boils down to showing that $L_E$ and $L_Z$ 
are different because they have different degrees over $L^{un}$. This version was used in \cite{BeChDaYa} for instance.

{\bf Version 3: different conductors.} Let $M$ be an extension of $L$ 
and $G_M \subset \Gal(\overline{L}/ L)$ its corresponding subgroup.
Note that the isomorphism \eqref{E:inertiaArg} implies 
that $\rhobar_{E,p} |_{G_M} \cong \rhobar_{Z,p}|_{G_M}$. 
In particular, the restrictions $\rhobar_{E,p} |_{G_M}$
and $\rhobar_{Z,p}|_{G_M}$ must have the same conductor exponent.
Therefore, we can establish \eqref{E:inertiaArgNot}
if we find a field $M/L$ where the two restrictions 
have different conductor exponents. 

In practice, we compute the conductor exponents of $A/M$, where $A = E$ or $Z$. However, if $A/M$ has potentially good reduction, then the $\rhobar_{A,p} |_{I_M}$ factors through a finite group of order only divisible by $2$ and $3$. Hence, the conductor exponent of $\rhobar_{A,p} |_{G_M}$ is the same as the conductor exponent of $\rho_{A,p} |_{G_M}$, where $\rho_{A,p}$ denotes the $p$-adic representation attached to~$A$. This in turn coincides with the conductor exponent of $A/M$, provided $p \not= 2,3$.

Note that version 2 is obtained by taking $M = L_E$. Indeed, we get $G_M = I_L$ and  $\rhobar_{E,p} |_{G_M}$ will have conductor exponent 0 (because $E/M$ has good reduction)  whereas $\rhobar_{Z,p} |_{G_M}$ has non-zero conductor exponent (because $Z/M$ does not 
have good reduction).

\begin{remark}
In applications, the curves are often defined over a totally 
real number field $K$. Therefore, we can test if any of the 
versions above succeeds for different primes. Success at one 
prime is enough to discard the global isomorphism of two mod $p$ representations.
\end{remark}

\section{A multi-Frey approach to the equation $x^{5} + y^{5} = 3 z^p$}

In this section, we will use the following
factorization and notation
\[
x^5 + y^5 = (x+y)\phi_5(x,y) = (x+y)\psi_5(x,y)\bar{\psi}_5(x,y),
\]
where $\omega$ and $\bar{\omega}$ are the complex roots of $X^2+X-1$, and
\[
\phi_5(x,y) = x^4-x^3y+x^2y^2-xy^3+y^4
\]
\[
 \psi_5(x,y) = x^2 + \omega xy + y^2,  \qquad \bar{\psi}_5(x,y) = x^2 + \bar{\omega} xy + y^2.
\]

\subsection{The modular method over $\Q$}
\label{S:overQ}

Here we compile results from \cite{Bill} and \cite{BD}.

Let $a,b$ be coprime integers with~$a+b\not=0$. We consider the following Frey elliptic curve over~$\Q$ denoted~$E(a,b)$ or~$E$ in~\cite{Bill} and~\cite{BD}, and whose construction is due to Darmon~:
\[
 W_{a,b} \; : \; y^2 = x^3 - 5(a^2 + b^2)x^2 + 5\phi_5(a,b)x.
\]
The discriminant~$\Delta(W_{a,b})$ of~$W_{a,b}$ is given by
\[
\Delta(W_{a,b})= 2^4 5^3 (a+b)^2 (a^5 + b^5)^2.
\]
The following lemma is a reformulation of results proved in Section~$2$ of~\cite{Bill}.
\begin{lemma}\label{lemma:W}
The conductor~$N_{W_{a,b}}$ of~$W_{a,b}$ is 
\[
N_{W_{a,b}}=2^\alpha\cdot 5^2\cdot r,
\]
where $r$ is the product of all prime divisors~$\not=2,5$ of~$a^5+b^5$ and 
\[
\alpha=\left\{
\begin{array}{ll}
3 & \text{if $ab\equiv 0\pmod{4}$} \\
4 & \text{if $ab\equiv 2\pmod{4}$ or $\vv_2(a+b)=1$} \\
0 & \text{if $\vv_2(a+b)=2$} \\
1 & \text{if $\vv_2(a+b)\ge 3$} \\
\end{array}
\right..
\]
Furthermore, the following properties hold where $j(W_{a,b})$ denotes the $j$-invariant of~$W_{a,b}$~:
\begin{itemize}
\item  if $\ell\not=2,5$ is a prime of bad reduction, then the model defining~$W_{a,b}$ is minimal at~$\ell$ and we have $\vv_{\ell}(\Delta(W_{a,b}))=\delta\vv_\ell(a^5+b^5)$ where $\delta=2$, $4$ if $\ell$ divides~$\phi_5(a,b)$ or $\ell$ divides~$a+b$ respectively;
\item we have $\vv_2(j(W_{a,b}))\ge 0$ if and only if~$\vv_2(a+b)\le 2$;
\item we have $\vv_5(j(W_{a,b}))=1-4\vv_5(a+b)<0$ if~$5\mid a+b$ and $\vv_5(j(W_{a,b}))=0$ otherwise.
\end{itemize}
\end{lemma}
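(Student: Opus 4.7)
The lemma compiles the outputs of Tate's algorithm applied to $W_{a,b}$ at each bad prime, and its conclusions can be extracted from the computations carried out in \cite[\S 2]{Bill}. The plan is as follows.

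First, I would record the invariants of the given model. The polynomial identity
\[
25(a^2+b^2)^2 - 20\,\phi_5(a,b) = 5\,(a+b)^4,
\]
obtained by direct expansion, combined with the formula $\Delta = 16\,a_4^2(a_2^2-4a_4)$ for a curve of the shape $y^2=x^3+a_2x^2+a_4x$, yields
\[
\Delta(W_{a,b}) = 2^4\cdot 5^3\,(a+b)^2(a^5+b^5)^2
\]
since $a^5+b^5 = (a+b)\phi_5(a,b)$. A similar computation gives $c_4(W_{a,b}) = 2^4\cdot 5\,P(a,b)$ with $P(a,b) = 2a^4+3a^3b+7a^2b^2+3ab^3+2b^4$. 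In particular, the primes of bad reduction lie in $\{2,5\}\cup\{\ell : \ell\mid a^5+b^5\}$.

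Next, for primes $\ell\neq 2,5$ dividing $a^5+b^5$, the evaluation $\phi_5(a,-a) = 5a^4$ shows that $\gcd(a+b,\phi_5(a,b))$ is a power of $5$, so $\ell$ divides exactly one of the two factors. In either sub-case a direct check gives $v_\ell(c_4)=0$, so the reduction is multiplicative: the model is minimal at $\ell$, $v_\ell(N_{W_{a,b}}) = 1$, and the formula $v_\ell(\Delta) = \delta\,v_\ell(a^5+b^5)$ with $\delta\in\{2,4\}$ follows immediately from the discriminant formula above. For $\ell=5$, a direct inspection of $v_5(c_4)$ and $v_5(\Delta)$ together with Tate's algorithm shows that the conductor exponent is exactly $2$, regardless of whether $5\mid a+b$.

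The main obstacle is the analysis at $\ell = 2$. I would split into the four sub-cases defining $\alpha$, corresponding respectively to the congruences $ab\equiv 0\pmod 4$, $ab\equiv 2\pmod 4$ or $v_2(a+b)=1$, $v_2(a+b)=2$, and $v_2(a+b)\geq 3$; together with $\gcd(a,b)=1$ these cover all possibilities. In each sub-case I would apply Tate's algorithm over $\Z_2$ with appropriate scalings and translations. The most delicate sub-case is $v_2(a+b)=2$: here $v_2(\Delta)=12$ and $v_2(c_4)=4$, so after a careful $\Z_2$-integral change of coordinates one shows that $W_{a,b}$ acquires good reduction at $2$, giving $\alpha=0$. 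The three other sub-cases produce $\alpha=3,4,1$ by more standard applications of the algorithm.

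Finally, the three statements on $j(W_{a,b}) = c_4^3/\Delta$ follow from the computed valuations of $c_4$ and $\Delta$, using that $v_\ell(j)$ is independent of the chosen model. At $\ell=2$ this amounts to comparing $3v_2(c_4)$ with $v_2(\Delta)$ across the four sub-cases, and at $\ell=5$ one uses $P(a,-a)=5a^4$ to deduce $v_5(P(a,b))=1$ whenever $5\mid a+b$, hence $v_5(c_4)=2$ in that range, yielding $v_5(j) = 6 - (5+4v_5(a+b)) = 1-4v_5(a+b)$.
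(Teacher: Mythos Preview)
Your proposal is correct and follows exactly the approach the paper indicates: the paper does not prove this lemma but states it ``is a reformulation of results proved in Section~2 of~\cite{Bill}'', and your plan---computing $c_4$, $\Delta$ explicitly, separating the multiplicative primes $\ell\nmid 10$ via the coprimality of $a+b$ and $\phi_5(a,b)$ away from~$5$, and running Tate's algorithm at $2$ and $5$ in the indicated sub-cases---is precisely what that reference does. The identities you record (in particular $25(a^2+b^2)^2-20\phi_5 = 5(a+b)^4$, $c_4 = 80P$ with $P = 5(a^2+b^2)^2 - 3\phi_5$, and $P(a,-a)=5a^4$) are the right ones and your derivation of the $j$-invariant valuations from them is accurate.
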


Let $W_0$ and~$W_0'$ be the rational elliptic curves defined by the following equations
\[
W_0 \; : \; y^2=x^3+x^2+592x-16812\quad\text{and}\quad W_0' \; : \;   y^2=x^3-x^2-333x-2088.
\]
They are labelled \cite[\href{http://www.lmfdb.org/EllipticCurve/Q/1200/k/8}{1200.k8}]{lmfdb} and~\cite[\href{http://www.lmfdb.org/EllipticCurve/Q/1200/a/1}{1200.a1}]{lmfdb} in LMFDB respectively. In~\cite{Bill}, the elliptic curves~$W_0$ and $W_0'$ were referred to as~$1200$P$1$ and $1200$N$1$ (in Cremona's labelling) respectively, whereas in~\cite{BD} the authors used Stein's notation~$1200$K$1$ and $1200$A$1$. 

\begin{proposition}\label{L:BD} 
Let $(a,b,c)$ be a non-trivial primitive
solution to \eqref{E:55p} for $p \geq 5$. Write $W=W_{a,b}$. Then we have~$p > 10^7$,  $\vv_2(ab) = 1$ and $5\nmid a+b$. Furthermore, we have $\rhobar_{W,p} \cong \rhobar_{W_0',p}$. 
\end{proposition}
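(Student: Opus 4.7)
The plan is to assemble the statement by combining results already established in \cite{Bill} and \cite{BD}, which treat this very equation with the Frey curve $W_{a,b}$. I would begin by verifying the standing hypotheses of the modular method over $\Q$: modularity of $W$ is automatic, and the irreducibility of $\rhobar_{W,p}$ for $p\ge 5$ is handled in \cite{Bill} via Mazur's isogeny theorem applied to the specific form of $W_{a,b}$ (rational $2$-torsion is already visible in the defining model). Using the hypothesis $a^5+b^5 = 3c^p$ together with Lemma~\ref{lemma:W}, any prime $\ell\ne 2,3,5$ dividing the conductor $N_W$ satisfies $p\mid \vv_\ell(\Delta(W_{a,b}))$ via the multiplicities $\delta\in\{2,4\}$. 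Ribet's level lowering then yields $\rhobar_{W,p}\cong\rhobar_{f,p}$ for a weight-$2$ newform $f$ with trivial character and level supported on $\{2,3,5\}$, the factor $3$ appearing because $3\|a^5+b^5$.

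Next, I would rule out the possibilities $\vv_2(a+b)\in\{0,\ge 2\}$ and $5\mid a+b$. Each case combined with Lemma~\ref{lemma:W} produces an explicit small level (one of a handful of divisors of $2^4\cdot 3\cdot 5^2$, with multiplicative reduction at $5$ when $5\mid a+b$). For each, I would enumerate the finitely many newforms $f$ and apply the standard tools to eliminate them: comparison of $a_q(W)$ with $a_q(f)$ for small auxiliary primes $q$ via the quantity $B_q(W,f)$, and the image-of-inertia argument at $2$ and $5$ (Version~1 of Section~\ref{S:inertia}) to separate potentially good from potentially multiplicative reduction. This recovers the conditions $\vv_2(ab)=1$ and $5\nmid a+b$ established in \cite{Bill,BD}.

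Under these constraints, Lemma~\ref{lemma:W} gives $N_W=2^4\cdot 5^2\cdot r$ with $r$ odd, squarefree, and coprime to $15$, and the level-lowered representation $\rhobar_{W,p}$ lives at level $2^4\cdot 3\cdot 5^2=1200$. I would then enumerate the newforms in $S_2^{\mathrm{new}}(\Gamma_0(1200))$ and, following \cite{BD}, discard all candidates except $W_0'$ by combining (i) comparison of traces $a_q$ at several small primes $q$ of good reduction for $W$, and (ii) the image-of-inertia argument at $2$ to distinguish curves with different $2$-adic inertial types from $W$ (whose type is pinned down by $\vv_2(ab)=1$). The newform $W_0$ is kept alive by these local tests at most primes but is separated from $W_0'$ using a Galois-theoretic invariant at $2$ (or a sufficiently discriminating small prime), as carried out in loc.\ cit. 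The outcome is the asserted isomorphism $\rhobar_{W,p}\cong\rhobar_{W_0',p}$.

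The most delicate piece, and the one I expect to be the main obstacle, is the quantitative bound $p>10^7$. This does not follow from the level-lowering analysis of $W_{a,b}$ alone; it is obtained in \cite{BD} by invoking a second Frey curve (defined over $\Q(\sqrt 5)$ using the factorization through $\phi_5$) and extracting effective information from a \emph{multi-Frey} comparison — essentially combining the congruences coming from both Frey sources with the symplectic criterion to rule out small $p$ explicitly. My plan for this step is simply to cite the corresponding theorem of \cite{BD}, which packages the argument, rather than re-derive it.
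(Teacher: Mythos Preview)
Your overall plan is sound and close to the paper's own argument: cite \cite{Bill,BD} for irreducibility, the conductor analysis, the bound $p>10^7$, and the reduction to candidates at level $1200$, then finish with an image-of-inertia argument. Two corrections are in order.

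First, there is a slip in your $2$-adic bookkeeping. You propose to rule out $\vv_2(a+b)\in\{0,\ge 2\}$ and claim this recovers $\vv_2(ab)=1$. These conditions are incompatible: $\vv_2(ab)=1$ forces one of $a,b$ to be odd and the other $\equiv 2\pmod 4$, so $a+b$ is odd, i.e.\ $\vv_2(a+b)=0$. The case distinction in Lemma~\ref{lemma:W} is indexed partly by $ab\pmod 4$ and partly by $\vv_2(a+b)$; what actually survives (and yields $\alpha=4$) is $ab\equiv 2\pmod 4$, not $\vv_2(a+b)=1$. Keep the two quantities straight when invoking that lemma.

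Second, your description of how \cite{BD} obtains $p>10^7$ is not accurate. That paper works only with the Frey curve $W$ over $\Q$; there is no second Frey curve over $\Q(\sqrt 5)$ and no multi-Frey input there (those curves are introduced later, in \cite{DF1}). The bound comes from Kraus's method applied to the surviving congruence $\rhobar_{W,p}\cong\rhobar_{W_0,p}$ or $\rhobar_{W_0',p}$: for each $p\le 10^7$ one exhibits an auxiliary prime at which the congruence fails. Since your plan is simply to cite \cite{BD}, this does not break your proof, but your explanation of the obstacle is off.

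Finally, the paper organises the elimination of $5\mid a+b$ slightly differently from you, and more cleanly. It first establishes $\vv_2(ab)=1$ and, via \cite[Lemma~4.4]{BD}, reduces to the dichotomy $\rhobar_{W,p}\cong\rhobar_{W_0,p}$ (if $5\mid a+b$) or $\rhobar_{W_0',p}$ (if $5\nmid a+b$). The case $W_0$ is then killed in one stroke by inertia at $2$, not at $5$: the curve $W_0$ has potentially \emph{multiplicative} reduction at $2$, whereas $\vv_2(ab)=1$ forces $\vv_2(j(W))\ge 0$, so $W$ has potentially \emph{good} reduction at $2$, and Version~1 of the inertia argument gives $\#\rhobar_{W,p}(I_2)\ne\#\rhobar_{W_0,p}(I_2)$. (An alternative, also noted in the paper, is to twist by $\chi_{-1}$ and compare $a_3$.) Your proposal to use inertia at $5$, or to eliminate $5\mid a+b$ before pinning down the $2$-adic behaviour, could be made to work but is not the route taken.
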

\begin{proof}
According to~\cite[Remark~4.6]{BD}, we have~$p>10^7$. Besides, it follows from conductor computations (recalled above and in Section~$3$ of~\cite{Bill}) and \cite[Lemma~4.4]{BD}, that we have $\vv_2(ab)=1$ and $\rhobar_{W,p} \cong \rhobar_{W_0,p}$ or  $\rhobar_{W,p} \cong \rhobar_{W_0',p}$ according to whether $5$ divides~$a+b$ or not. 

The curve~$W_0$ has bad additive reduction at~$2$ with potentially multiplicative reduction. On the other hand, from $\vv_2(ab)=1$ and Lemma~\ref{lemma:W}, it follows that~$\vv_2(j(W))\ge 0$. Therefore if~$I_2$ denotes an inertia subgroup at~$2$, then $\#\rhobar_{W,p}$ belongs to~$\{2,3,4,6,8,24\}$ while by the theory of Tate curves we have~$\#\rhobar_{W_0,p}(I_2)=2$ or~$2p$. In particular, it follows from Version~$1$ of the `image of inertia argument' explained in Section~\ref{S:inertia} that $\rhobar_{W,p} \not\cong \rhobar_{W_0,p}$ and $5\nmid a+b$ as claimed.

Alternatively, we can argue as follows: Suppose $5 \mid a + b$ and $\rhobar_{W,p} \cong \rhobar_{W_0,p}$. Then it follows that $\rhobar_{W,p} \otimes \chi \cong \rhobar_{W \otimes \chi,p} \cong \rhobar_{W_0 \otimes \chi,p} \cong \rhobar_{W_0,p} \otimes \chi$, where $W \otimes \chi$ and $W_0 \otimes \chi$ are the twists of $W$ and $W_0$, respectively, by the quadratic character $\chi = \chi_{-1}$ associated to the quadratic field $\Q(\sqrt{-1})$. Now, the trace of Frobenius at $3$ of $W_0 \otimes \chi$ is $a_3(W_0 \otimes \chi) = -1$, where as the possible traces of Frobenius at $3$ of $W \otimes \chi$ are $a_3(W \otimes \chi) = 1, \pm 2$. Hence, $p \le 3$, a contradiction.

\end{proof}

\subsection{The modular method over $\Q(\sqrt{5})$}
\label{S:overQ5}

In~\cite{DF1}, the modular method was applied with the multi-Frey technique using two
Frey \mbox{$\Q$-curves} defined over $\Q(\sqrt{5})$ to solve~\eqref{E:55p} for a set
of prime exponents with Dirichlet density $3/4$. 
At the time, the purpose of using $\Q$-curves
was to guarantee their modularity. It is now known that elliptic
curves over real quadratic fields are modular (see \cite{FLHS}) and therefore we
can work directly over $\Q(\sqrt{5})$, largely simplifying the arguments.

We now sharpen the relevant results from \cite{DF1} in the language of Hilbert modular forms.

Let $a,b$ be coprime integers. Using the notation in the beginning of this section, we consider the two elliptic curves defined over~$\Q(\sqrt{5})$ by the following equations~:
\begin{eqnarray*}
E_{a,b} & : & y^2 = x^3 + 2(a+b)x^2 - \bar{\omega}\psi_5(a,b)x \\
F_{a,b} & : & y^2 = x^3 + 2(a-b)x^2 + \left(\frac{-3(\omega-\bar{\omega})}{10} + \frac{1}{2}\right)\psi_5(a,b)x.
\end{eqnarray*}
These two curves were denoted~$E_{(a,b)}$ and~$F_{(a,b)}$ in~\cite{DF1}, respectively. 
Their standard  invariants  are given by the following identities~:
\begin{eqnarray}
c_4(E_{a,b}) & = & -2^4\left(\bar{\omega}\psi_5(a,b)+2^2\omega\bar{\psi}_5(a,b)\right)  \label{eq:c4_formulaI}\\
c_6(E_{a,b}) & = & -2^6(a+b)\left(\bar{\omega}\psi_5(a,b)-2^3\omega\bar{\psi}_5(a,b)\right) \\
\Delta(E_{a,b}) & = & 2^6\bar{\omega}\phi_5(a,b)\psi_5(a,b) \label{eq:disc_formulaI} 
\end{eqnarray}
and
\begin{eqnarray}
c_4(F_{a,b}) & = & 2^4\left(\left(\frac{-3}{10}(\omega-\bar{\omega}) + \frac{1}{2}\right)\psi_5(a,b)+2^2\left(\frac{3}{10}(\omega-\bar{\omega}) + \frac{1}{2}\right)\bar{\psi_5}(a,b)\right) \label{eq:c4_formulaII}\\
c_6(F_{a,b}) & = & 2^6(a-b)\left(\left(\frac{-3}{10}(\omega-\bar{\omega}) + \frac{1}{2}\right)\psi_5(a,b)-2^3\left(\frac{3}{10}(\omega-\bar{\omega}) + \frac{1}{2}\right)\bar{\psi_5}(a,b)\right) \label{eq:c6_formulaII} \\
\Delta(F_{a,b}) & = & 2^6\left(\frac{-3}{10}(\omega-\bar{\omega}) + \frac{1}{2}\right)^2 \left(\frac{3}{10}(\omega-\bar{\omega}) + \frac{1}{2}\right)\phi_5(a,b)\psi_5(a,b). \label{eq:disc_formulaII}
\end{eqnarray}

We now determine the conductors of~$E_{a,b}$ and~$F_{a,b}$. For simplicity, let us write $E=E_{a,b}$ and $F=F_{a,b}$ and denote by $N_E$ and $N_F$ the conductors of the curves $E$ and~$F$, respectively.

\begin{lemma}	\label{lem:bad_reduction_55p_E_and_FI}
Let $C$ be one of the curves~$E$ or~$F$ and let~$\fq$ be a prime ideal in~$\Q(\sqrt{5})$ of residual characteristic~$\ell\neq2, 5$. Then $C$ has bad reduction at~$\fq$ if and only if $\ell\mid\phi_5(a,b)$. Moreover in that case, we have $\ell\equiv1\pmod{5}$ and 
$$
\vv_{\fq}(c_4(C))=0\quad\text{and}\quad \vv_{\fq}(\Delta(C))=2\vv_{\fq}(\psi_5(a,b))+\vv_{\fq}(\bar{\psi_5}(a,b))
$$
In particular, $C$ has bad multiplicative reduction at~$\fq$ and hence~$\vv_{\fq}(	N_C)=1$.
\end{lemma}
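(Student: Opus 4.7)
The plan is to read both the locus of bad reduction and its Kodaira type directly from the formulas \eqref{eq:c4_formulaI}--\eqref{eq:disc_formulaII}, treating $C = E$ and $C = F$ in parallel. The main preliminary observation is that every scalar coefficient appearing in front of $\psi_5(a,b)$ and $\bar{\psi}_5(a,b)$ in those formulas is a unit at any prime $\fq$ of residue characteristic $\ell \neq 2,5$. For $\omega$ and $\bar{\omega}$ this follows from $\omega \bar{\omega} = -1$; for the scalars $\alpha = -3(\omega-\bar{\omega})/10 + 1/2$ and $\beta = 3(\omega-\bar{\omega})/10 + 1/2$ appearing in the formulas for $F$, one computes $\alpha\beta = -1/5$, which is a unit at $\fq$ since $\ell \neq 5$, so both $\alpha$ and $\beta$ are $\fq$-adic units. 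The same denominator check also shows that the model defining $F_{a,b}$ is $\fq$-integral.

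Next I would establish the stated equivalence. Using $\phi_5 = \psi_5 \bar{\psi}_5$, we may rewrite $\Delta(C)$ as a $\fq$-adic unit times $\psi_5(a,b)^{2} \bar{\psi}_5(a,b)$, so bad reduction at $\fq$ is equivalent to $\fq \mid \psi_5(a,b)\bar{\psi}_5(a,b) = \phi_5(a,b)$; since $\phi_5(a,b) \in \Z$, this is equivalent to $\ell \mid \phi_5(a,b)$. Assuming this holds, coprimality of $a,b$ forces $\ell \nmid ab$, and $(a/b) \bmod \ell$ is a primitive $5$-th root of unity in $\F_\ell^\times$, whence $\ell \equiv 1 \pmod 5$. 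The small but essential point is then that $\fq$ cannot divide both $\psi_5(a,b)$ and $\bar{\psi}_5(a,b)$: from the identities $\psi_5(a,b) - \bar{\psi}_5(a,b) = (\omega - \bar{\omega})\,ab = \sqrt{5}\,ab$ and $\ell \neq 5$, simultaneous vanishing would force $\fq \mid ab$, and substituting $\fq \mid a$ into $\psi_5(a,b) \equiv 0$ gives $\fq \mid b^2$, contradicting $\gcd(a,b)=1$.

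Once one knows that exactly one of $\psi_5(a,b), \bar{\psi}_5(a,b)$ is a $\fq$-adic unit, the required equalities follow at once. The formula $v_\fq(\Delta(C)) = 2v_\fq(\psi_5(a,b)) + v_\fq(\bar{\psi}_5(a,b))$ is immediate from the factorization above; and reducing the $c_4$-formula modulo $\fq$, the surviving term is a product of three units (a power of $2$, one of $\omega, \bar{\omega}, \alpha, \beta$, and the non-vanishing among $\psi_5, \bar{\psi}_5$), so $v_\fq(c_4(C)) = 0$. Combined with $v_\fq(\Delta(C)) > 0$, Tate's algorithm then yields multiplicative reduction at $\fq$ and $v_\fq(N_C) = 1$.

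The only steps that require genuine care are the norm identities $\omega\bar{\omega} = -1$ and $\alpha\beta = -1/5$ that make the scalar coefficients $\fq$-adic units, and the check that, after specialising at $(a,b)$, no further cancellation occurs in $c_4(C)$ modulo $\fq$; both are routine computations in $\Q(\sqrt{5})$, and are the main obstacle only in the sense of book-keeping.
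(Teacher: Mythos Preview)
Your argument is correct and follows the same approach as the paper: read off the reduction type from the explicit formulas for $c_4$ and $\Delta$, using that $\psi_5(a,b)$ and $\bar\psi_5(a,b)$ are coprime away from~$5$. The paper outsources the coprimality and the congruence $\ell\equiv 1\pmod 5$ to \cite[Prop.~3.1 and Lemma~2.2]{DF1}, whereas you supply both directly; one harmless slip is that it is $-(a/b)$, not $(a/b)$, that is a primitive $5$th root of unity modulo~$\ell$ (since $\phi_5(t,1)=(t^5+1)/(t+1)$), but the conclusion $\ell\equiv 1\pmod 5$ is unaffected.
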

\begin{proof}
Recall that~$\phi_5(a,b)=\psi_5(a,b)\bar{\psi}_5(a,b)$ with~$\psi_5(a,b)$, $\bar{\psi_5}(a,b)$ coprime outside~$5$ (\cite[Prop.~3.1]{DF1}). 
If~$C$ has bad reduction at~$\fq$ with $\fq$ above $\ell\not=2,5$, then by formulas~\eqref{eq:c4_formulaI}-\eqref{eq:disc_formulaI}~and~\eqref{eq:c4_formulaII}-\eqref{eq:disc_formulaII}, we have that $\fq$ divides $\psi_5(a,b)\phi_5(a,b)=\psi_5(a,b)^2\bar{\psi_5}(a,b)$ and~$\fq\nmid c_4(C)$. Hence $\ell\mid\phi_5(a,b)$. Conversely, if $\ell\mid \phi_5(a,b)$, then any prime ideal~$\fq$ above~$\ell$ divides~$\psi_5(a,b)\bar{\psi}_5(a,b)$. In particular, we have $\fq\mid\Delta(C)$ and $\fq\nmid c_4(C)$.

Hence the result with the congruence~$\ell\equiv1\pmod{5}$ coming from~\cite[Lemma~2.2]{DF1}.
\end{proof}

Let $\fq_2$ and $\fq_5$ be the unique primes in $\Q(\sqrt{5})$ above $2$ and~$5$, respectively. Since $2$ is inert in $\Q(\sqrt{5})$ we will write simply $2$ for $\fq_2$. 
\begin{lemma}\label{lem:bad_reduction_55p_E_and_FII}
We have the following valuations~:
\begin{align}
  & \vv_2(N_E) = \vv_2(N_F) = 6, \label{cond5-1} \\
  & \vv_{\fq_5}(N_E) = 0 \text{ when } 5 \nmid a+b \text{ and } \vv_{\fq_5}(N_E) = 2 \text{ when } 5 \mid a+b, \\
  & \vv_{\fq_5}(N_F) = 2 \text{ when } 5 \nmid a+b \text{ and } \vv_{\fq_5}(N_F) = 0 \text{ when } 5 \mid a+b. \label{cond5-2}
\end{align}

\end{lemma}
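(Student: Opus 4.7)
The plan is to apply Tate's algorithm to the Weierstrass models of $E=E_{a,b}$ and $F=F_{a,b}$ using the explicit formulas~\eqref{eq:c4_formulaI}--\eqref{eq:disc_formulaII} for their standard invariants $c_4$, $c_6$, and~$\Delta$.

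At the prime $\fq_2=(2)$ of~$\Q(\sqrt{5})$ (inert, with residue field~$\F_4$): for any coprime integers~$a,b$, the quantities $\phi_5(a,b)$, $\psi_5(a,b)$, $\bar{\psi}_5(a,b)$ all reduce to units in~$\F_4$. When $a,b$ have opposite parity this is immediate; when both are odd, one uses that $\omega$ reduces to a primitive cube root of unity in $\F_4$ (since $\omega^2+\omega-1\equiv\omega^2+\omega+1\pmod{2}$). A similar direct verification shows that $\omega$, $\bar{\omega}$, and the coefficients $\tfrac{\pm 3(\omega-\bar{\omega})}{10}+\tfrac{1}{2}$ appearing in the defining equation of~$F$ are all $\fq_2$-units. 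The formulas then give $\vv_2(\Delta(E))=\vv_2(\Delta(F))=6$ and $\vv_2(c_4(E))=\vv_2(c_4(F))=4$, so in particular $\vv_2(j)=6\geq 0$ and both curves have potentially good reduction at~$\fq_2$. Running Tate's algorithm at residue characteristic~$2$ on these invariants yields the conductor exponent~$6$.

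At the prime $\fq_5=(\sqrt{5})$ (with residue field~$\F_5$): from $2\omega=-1+\sqrt{5}\equiv-1\pmod{\fq_5}$ we have $\omega\equiv\bar{\omega}\equiv 2\pmod{\fq_5}$, and therefore
\[
\psi_5(a,b)\equiv\bar{\psi}_5(a,b)\equiv (a+b)^2\pmod{\fq_5}.
\]
Hence $\fq_5\mid\psi_5(a,b)$ if and only if $5\mid a+b$. For~$E$: since $\bar{\omega}$ is a $\fq_5$-unit, the formula $\Delta(E)=2^6\bar{\omega}\phi_5(a,b)\psi_5(a,b)$ shows that $E$ has good reduction at~$\fq_5$ precisely when $5\nmid a+b$, giving $\vv_{\fq_5}(N_E)=0$; when $5\mid a+b$, Tate's algorithm gives additive reduction with conductor exponent~$2$. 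For~$F$: the coefficient $\alpha:=\tfrac{-3(\omega-\bar{\omega})}{10}+\tfrac{1}{2}=\tfrac{5-3\sqrt{5}}{10}$ satisfies $\vv_{\fq_5}(\alpha)=-1$, so the given Weierstrass model is not $\fq_5$-integral; after an admissible change of variables to an $\fq_5$-minimal integral model, the two cases on whether $5\mid a+b$ interchange relative to~$E$, giving the complementary values $\vv_{\fq_5}(N_F)=2$ and~$0$ respectively.

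The main obstacle is Tate's algorithm at residue characteristic~$2$: wild ramification can inflate the conductor exponent beyond what the valuations of $c_4$, $c_6$, $\Delta$ alone determine, so pinning down the exact value~$6$ requires a careful check of the detailed hierarchy of congruence conditions in the algorithm (conveniently verified in \texttt{Magma}). A secondary technical point is the rescaling to an $\fq_5$-minimal integral model for~$F$; it is exactly this change of variables that flips the $5\mid a+b$ case distinction between~$E$ and~$F$.
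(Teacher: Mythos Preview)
Your approach is the same as the paper's: compute the valuations of $c_4$ and $\Delta$ and feed them into Tate's algorithm via Papadopoulos's tables. The paper likewise treats only~$F$ in detail, calling~$E$ ``similar, but simpler''. Two points, however, are handled more precisely in the paper than in your sketch.

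\textbf{At the prime~$2$.} You correctly obtain $(\vv_2(c_4),\vv_2(\Delta))=(4,6)$, but then defer the actual Tate step to {\tt Magma}. The paper instead appeals to \cite[Tableau~IV]{papado}, which from $(4,6)$ places us in Case~$3$ or Case~$4$ of Tate's classification, and then uses \cite[Proposition~1]{papado} with the explicit translations $t=0$ and $r=1+\bar{\omega}$ (if $ab$ is even) or $r=1$ (if $ab$ is odd) to compute $\vv_2(a_4+ra_2+r^2)=1$; this nails down Case~$3$ and hence conductor exponent~$6$. That is the ``careful check of the detailed hierarchy'' you allude to, and it can be done by hand.

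\textbf{At the prime~$\fq_5$ for~$F$.} Your claim that ``the given Weierstrass model is not $\fq_5$-integral'' is not quite right: since the coefficient of~$x$ is $\alpha\,\psi_5(a,b)$ and $\vv_{\fq_5}(\psi_5(a,b))=1$ when $5\mid a+b$, the model \emph{is} $\fq_5$-integral precisely when $5\mid a+b$; in that case one reads off good reduction directly. The ``interchange relative to~$E$'' you invoke is only a heuristic, not an argument. In the paper, the non-integral case $5\nmid a+b$ is handled by computing $(\vv_{\fq_5}(c_4(F)),\vv_{\fq_5}(\Delta(F)))=(-1,-3)$, making the standard change of variables to an integral model with valuations $(3,9)$, and then reading off $\vv_{\fq_5}(N_F)=2$ from \cite[Tableau~I]{papado}.
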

\begin{proof}
We only give the details of our computations for the curve~$F$, the case of~$E$ being similar, but simpler. The given model for~$F$ is integral at~$2$ and we have $(\vv_2(c_4(F)),\vv_2(\Delta(F))) = (4,6)$ (see formulas~\eqref{eq:c4_formulaII} and~\eqref{eq:disc_formulaII}). Therefore, according to~\cite[Tableau~IV]{papado}, we are either in Case~$3$ or in Case~$4$ of Tate's classification. To decide which case actually occurs, we then apply Proposition~$1$ of \emph{loc. cit.} with, in its notation, $t=0$ and $r=1+\bar{\omega}$ or $r=1$ according to whether $ab$ is even or odd, respectively. Let us denote by $a_2$ and $a_4$ the coefficients of~$x^2$ and~$x$ in the right-hand side of the equation defining~$F$, respectively. Then, we have $\vv_2(a_4+ra_2+r^2)=1$ and we conclude that we are in Case~$3$ of Tate's classification. In particular, we have $\vv_2(N_E) = 6$.

For the conductor valuation at~$\fq_5$, we first notice that the given model for~$F$ is integral at~$\fq_5$ if and only if~$5$ divides~$a+b$. In that case we have $\vv_{\fq_5}(\phi_5(a,b))=2$ and $\vv_{\fq_5}(\psi_5(a,b))=1$. In  particular, the curve~$F$ has good reduction at~$\fq_5$ and therefore $\vv_{\fq_5}(N_F) = 0$. If $5$ does not divide~$a+b$, then we have $(\vv_{\fq_5}(c_4(F)),\vv_{\fq_5}(\Delta(F))) = (-1,-3)$. A change of variables over~$\Q(\sqrt{5})$	then gives an integral model for~$F$ whose~$c_4$ and~$\Delta$ invariants have respective valuations~$3$ and~$9$ at~$\fq_5$. According to~\cite[Tableau~I]{papado}, we have ${\vv_{\fq_5}(N_F)=2}$.
\end{proof}

In \cite{DF1}, the work of Ellenberg on $\Q$-curves (see \cite[Proposition~3.2]{Ell}) was 
used to establish that the mod~$p$ Galois representations attached to~$E_{a,b}$ and $F_{a,b}$ are
irreducible for $p=11$ and $p \geq 17$. We establish here an irreducibility result
without using the fact that $E_{a,b}$ and $F_{a,b}$ are $\Q$-curves.

\begin{proposition}\label{irreducible5}
 Let $p\ge 7$  be a prime number. Then,  $\rhobar_{E,p}$ and $\rhobar_{F,p}$ are irreducible when $5\nmid a+b$ and $5\mid a+b$ respectively.
\end{proposition}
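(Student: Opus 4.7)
The plan is to argue by contradiction: assume $\bar\rho_{E,p}$ is reducible (the argument for $\bar\rho_{F,p}$ will be entirely symmetric, since under $5 \mid a+b$ the curve $F$ has the same type of good reduction at $\fq_5$ and the same conductor profile as $E$ under $5 \nmid a+b$). Then there is a character $\theta \colon G_K \to \overline{\F}_p^{\times}$ whose fixed line in $E[p]$ is $G_K$-stable, with quotient character $\bar\chi_p\theta^{-1}$, where $\bar\chi_p$ is the mod $p$ cyclotomic character. The goal is to combine tight control on the conductor of $\theta$ coming from Lemmas~\ref{lem:bad_reduction_55p_E_and_FI} and~\ref{lem:bad_reduction_55p_E_and_FII} with the triviality of the narrow class group of $K=\Q(\sqrt 5)$ to derive a contradiction for $p\ge 7$.

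First, I would pin down the ramification of $\theta$. At a prime $\fq\nmid 2p$ where $E$ has multiplicative reduction (these are exactly the primes dividing $\phi_5(a,b)$ by Lemma~\ref{lem:bad_reduction_55p_E_and_FI}), the Tate curve description forces the semisimplification of $\bar\rho_{E,p}|_{G_{K_\fq}}$ to be $\bar\chi_p\lambda \oplus \lambda$ for an unramified quadratic character $\lambda$; since $\bar\chi_p$ is unramified away from $p$, we obtain $\theta|_{I_\fq}=1$. At $\fq_5$, the hypothesis $5\nmid a+b$ forces good reduction (by \eqref{cond5-1}--\eqref{cond5-2}), so $\theta$ is unramified at $\fq_5$. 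At the prime $2$, where $\vv_2(N_E)=6$ and $p\neq 2$, the image of inertia factors through a tame quotient of order dividing $24$, so $\theta^{24}$ is unramified at $2$. Combining these, $\theta^{24}$ is unramified outside primes above $p$.

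Next I would analyze $\theta|_{I_\fp}$ at primes $\fp\mid p$ using the Serre/Raynaud classification of finite flat group schemes: $\theta|_{I_\fp}$ is a product of fundamental characters of level $1$ or $2$, hence $\theta^{24}|_{I_\fp}$ is an integer power of $\bar\chi_p^{24}$. Since $K=\Q(\sqrt 5)$ has both class number and narrow class number equal to $1$, a suitable twist of $\theta^{24}$ by a power of the cyclotomic character becomes a finite-order totally even Hecke character of $K$ whose conductor is a power of $2$ alone; such characters are enumerable via the (small) narrow ray class groups of conductor $2^k$ over $K$, yielding a short finite list of candidates for $\theta$.

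Finally I would compare traces of Frobenius. For each candidate $\theta$, the relation
\[
a_\fq(E) \equiv \theta(\operatorname{Frob}_\fq) + \bar\chi_p(\operatorname{Frob}_\fq)\,\theta(\operatorname{Frob}_\fq)^{-1} \pmod{\mathfrak p}
\]
at a prime $\fq$ of good reduction produces an explicit nonzero integer (depending only on the candidate and on $\fq$, not on $(a,b)$ in a problematic way, because $a_\fq(E)$ for $N\fq$ small can be listed via its finitely many possible residues modulo $\fq$) which must be divisible by $p$; taking the gcd over a few split primes $\fq$ (and, if needed, cycling $(a\bmod \fq, b\bmod \fq)$) gives a bound forcing $p<7$. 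The main obstacle is this last step: one must verify that the finite list of candidate Hecke characters, together with the local-at-$2$ and local-at-$\fq_5$ constraints specific to the Frey family, leaves no room for a reducibility prime $p\ge 7$ — this is the replacement for the $\Q$-curve input of Ellenberg used in \cite{DF1}, and the verification is carried out by an explicit, uniform-in-$(a,b)$ computation of traces at a small set of auxiliary primes.
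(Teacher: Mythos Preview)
Your outline follows a reasonable general strategy for proving irreducibility over totally real fields, but it diverges from the paper's proof and, as you yourself flag, leaves the decisive step unverified.

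The paper's argument is much more direct. After writing $\rhobar_{C,p}^{ss}\cong\theta\oplus\theta'$ with $\theta\theta'=\chi_p$, it invokes the criterion of Freitas--Siksek to show that exactly one of $\theta,\theta'$ is ramified at~$p$; taking $\theta$ unramified at~$p$, its conductor is supported only at the unique additive prime~$2$, and since $\vv_2(N_C)=6$ the conductor of~$\theta$ is exactly~$2^3$. The ray class group of $\Q(\sqrt5)$ of modulus $2^3\infty_1\infty_2$ is $(\Z/2\Z)^3$, so $\theta$ has order at most~$2$. Hence $C$ or a quadratic twist~$C'$ has a $K$-rational $p$-torsion point. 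Since the Weierstrass model of~$C$ visibly has a $2$-torsion point over~$K$, the torsion subgroup of $C$ or $C'$ has order divisible by~$2p$ with $p\ge 7$, contradicting the Bruin--Najman classification of torsion over real quadratic fields.

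By contrast, you weaken unnecessarily to $\theta^{24}$ (also: the inertia image at~$2$ has order dividing~$24$ because of potentially good reduction, not because it factors through a \emph{tame} quotient---the ramification here is wild), and you end with a trace comparison over all residue classes of $(a,b)$ at auxiliary primes, which you correctly identify as the ``main obstacle'' but do not carry out. That computation is not obviously successful: the values $a_\fq(E)$ genuinely vary with $(a\bmod\fq,b\bmod\fq)$, so you would need the resulting gcds to be uniformly $<7$, and you give no evidence for this. The paper avoids this computation entirely by converting the problem into a torsion question with a known clean answer.
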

\begin{proof}
Let $p \ge 7$ be a prime, and put $C = E$ or $C = F$.

Let us denote by $\rhobar_{C,p}^{ss}$ the semi-simplification of the representation~$\rhobar_{C,p}$. Suppose 
$\rhobar_{C,p}^{ss} \cong \theta \oplus \theta'$ with the characters~$\theta, \theta'$ satisfying 
$\theta \theta' = \chi_p$ where~$\chi_p$ denotes the mod~$p$ cyclotomic character. By \cite[Lemma~6.3]{FreSikFLTsmall} for instance, we have that $\theta$ and $\theta'$ are unramified outside $p$ and the additive primes of~$C$. Furthermore, $\theta$ and $\theta'$ have the same conductor away from $p$. The unit group of $K$ is 
generated by $\{-1, \epsilon\}$ where $\epsilon^2-\epsilon-1=0$. In the notation of~\cite{FS}, we compute $B = -2^6\cdot 5$. From the first paragraph of the proof of \cite[Theorem~1]{FS} we 
thus conclude that exactly one of $\theta$, $\theta'$ 
ramifies at (the primes above) $p$. Let us therefore assume that $\theta$ is unramified at~$p$.

Under the assumptions of the proposition, the only additive prime for~$C$ is~$2$ and it satisfies $\Norm(2) = 4$. It follows from $\vv_2(N_C) = 6$
and \cite[Theorem~1.5]{Jarv2} that level lowering at $2$ cannot occur.
Therefore, from the conductor computations above (see Lemmas~\ref{lem:bad_reduction_55p_E_and_FI} and~\ref{lem:bad_reduction_55p_E_and_FII}) it follows that the conductor of $\theta$ is $2^3$.

The Ray class group of~$\Q(\sqrt{5})$ of modulus~$2^3 \infty_1 \infty_2$ (where 
$\infty_1$ and $\infty_2$ denote the two real places) is isomorphic to~$(\Z/2\Z)^3$. In particular, if $I_2 \subset G_{\Q(\sqrt{5})}$ denotes an inertia subgroup at~$2$, then $\theta |_{I_2}$ is of order~$1$ or~$2$. Thus either $C$ or a quadratic twist 
$C'$ of $C$ has a~$2$-torsion point defined over $\Q(\sqrt{5})$. Therefore the torsion subgroup
of $C$ or $C'$ has order divisible by $2p$ with $p \geq 7$. 
From \cite[Theorem 6]{BruNaj}, we see that this is impossible.
\end{proof}

Since~$E_{a,b}$ and~$F_{a,b}$ are defined over a real quadratic field, they are modular by the main result of~\cite{FLHS}. This completes Step~$3$ of the modular method.

\subsection{Bounding the exponent}
\label{bounding}
We are now in position to study equation~\eqref{E:rrp} with $r=5$. Suppose that there exists an integer~$c$ such that $(a,b,c)$ is a non-trivial primitive solution to equation~\eqref{E:rrp} with $r=5$ and $p\geq 7$ and assume that all the prime factors $\ell$ of $d$
satisfy $\ell \not\equiv 1 \pmod{5}$. Write $E = E_{a,b}$ and $F = F_{a,b}$.

The following lemma summarizes Step~$4$ of the modular method as applied to~$E$ and~$F$.
\begin{lemma} 
There exist a Hilbert newform~$f$ over $\Q(\sqrt{5})$ of parallel weight $2$, trivial character
and level~$2^6$ and a prime ideal~$\fp$ above~$p$ in the coefficient field of~$f$ such that
\[
\rhobar_{f,\fp}\cong \rhobar_{F,p}, \qquad \text{or } \qquad \rhobar_{f,\fp}\cong \rhobar_{E,p},
\]
according to whether~$5$ divides~$a+b$ or not.
\label{L:DF}
\end{lemma}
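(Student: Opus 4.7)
The plan is to apply Step 4 of the modular method to the curve $C := E$ when $5\nmid a+b$ and $C := F$ when $5\mid a+b$. First, since $C$ is an elliptic curve over the real quadratic field $\Q(\sqrt{5})$, it is modular by \cite{FLHS}, so there exists a Hilbert newform $g$ over $\Q(\sqrt{5})$ of parallel weight~$2$ and trivial character whose level equals the conductor $N_C$ and such that $\rhobar_{C,p}\cong\rhobar_{g,\fp'}$ for some prime $\fp'$ above $p$ in the coefficient field of $g$. Proposition~\ref{irreducible5} then provides the irreducibility hypothesis needed to invoke the level lowering theorems of Fujiwara--Jarvis--Rajaei for Hilbert modular forms.

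Next I would check the $p$-divisibility condition required to strip each prime of bad multiplicative reduction from the level. By Lemma~\ref{lem:bad_reduction_55p_E_and_FI} these are exactly the primes $\fq$ over rational primes $\ell\neq 2,5$ with $\ell\mid\phi_5(a,b)$, and such $\ell$ satisfy $\ell\equiv 1\pmod 5$, so $\ell\nmid d$ by the standing hypothesis on $d$. Primitivity of $(a,b,c)$ combined with $\phi_5(a,b)\equiv 5b^4\pmod{a+b}$ forces $\gcd(a+b,\phi_5(a,b))$ to be a power of $5$, so from $(a+b)\phi_5(a,b)=dc^p$ we get $\vv_\ell(\phi_5(a,b))=p\,\vv_\ell(c)$. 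Since $\psi_5(a,b)$ and $\bar\psi_5(a,b)$ are coprime outside $5$ (\cite[Prop.~3.1]{DF1}), the prime $\fq$ divides exactly one of them, and using the discriminant formula of Lemma~\ref{lem:bad_reduction_55p_E_and_FI} together with conjugation in $\Q(\sqrt{5})$ one gets $\vv_\fq(\Delta(C))\in\{p\,\vv_\ell(c),2p\,\vv_\ell(c)\}$, which is divisible by $p$. Thus level lowering strips all such primes from the level.

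It remains to control the primes above $2$ and $5$. The choice of $C$ was exactly engineered so that, by Lemma~\ref{lem:bad_reduction_55p_E_and_FII}, $\vv_{\fq_5}(N_C)=0$, so $\fq_5$ does not appear in the level at all. At the additive prime $2$ we have $\vv_2(N_C)=6$ and $\Norm(2)=4$, so \cite[Theorem~1.5]{Jarv2} precludes level lowering at $2$ and the factor $2^6$ survives. Putting everything together, we obtain a Hilbert newform $f$ over $\Q(\sqrt{5})$ of parallel weight $2$, trivial character and level $2^6$, together with a prime $\fp$ above $p$ in the coefficient field of $f$, such that $\rhobar_{f,\fp}\cong\rhobar_{C,p}$, which is the statement of the lemma.

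The routine part is the verification of the $p$-divisibility of $\vv_\fq(\Delta(C))$ at the multiplicative primes; the only mild subtlety is keeping track of the splitting behaviour of $\ell\equiv 1\pmod 5$ in $\Q(\sqrt{5})$ and the separate contributions from $\psi_5(a,b)$ and $\bar\psi_5(a,b)$, but this is straightforward. The conceptual content of the proof is concentrated in the earlier inputs (modularity \cite{FLHS}, irreducibility in Proposition~\ref{irreducible5}, and the conductor computations of Lemmas~\ref{lem:bad_reduction_55p_E_and_FI} and~\ref{lem:bad_reduction_55p_E_and_FII}), so no new obstacle is expected at this stage.
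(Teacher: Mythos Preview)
Your proof is correct and follows essentially the same route as the paper's: compute the Artin conductor of $\rhobar_{C,p}$ by showing that at every multiplicative prime $\fq$ (necessarily above some $\ell\equiv 1\pmod 5$, hence $\ell\nmid d$) the minimal discriminant has $p$-divisible valuation, note that the choice of $C$ makes the contribution at $\fq_5$ vanish, and then apply level lowering for Hilbert modular forms with irreducibility supplied by Proposition~\ref{irreducible5}. The only difference is cosmetic: the paper packages the valuation computation via \cite[Lemma~2.2]{DF1} whereas you unwind it directly, and your appeal to \cite[Theorem~1.5]{Jarv2} at the prime $2$ is not actually needed here (the conductor exponent of $\rhobar_{C,p}$ at the additive prime $2$ already equals $\vv_2(N_C)=6$ for $p>3$, so no lowering is in question), but it does no harm.
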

\begin{proof}
Let $\fq$ be a prime ideal in~$\Q(\sqrt{5})$ of bad reduction for~$E$ or~$F$ with residual characteristic~$\ell\not=2,5$. According to Lemma~\ref{lem:bad_reduction_55p_E_and_FI} and~\cite[Lemma~2.2]{DF1}, the reduction is multiplicative and, by our assumption on~$d$, the valuation of the minimal discriminant at~$\fq$ is $\delta\vv_{\ell}(a^5+b^5)=\delta(\vv_{\ell}(d)+p\vv_{\ell}(c))=\delta p\vv_{\ell}(c)$, where~$\delta=1$, $2$ when $\fq$ divides~$\bar{\psi_5}(a,b)$ or $\fq$ divides~$\psi_5(a,b)$ respectively. In particular, it is divisible by~$p$. We conclude from Lemma~\ref{lem:bad_reduction_55p_E_and_FII} that the Artin conductor of the mod~$p$ representations of~$F$ and~$E$ is~$2^6$ according to whether~$5$ divides~$a+b$ or not. 

The rest of the proof follows by applying level lowering for Hilbert modular forms (see \cite{Fuj}, \cite{Jarv}, \cite{Raj}), with irreducibility coming from Proposition~\ref{irreducible5} above.
\end{proof}

Let $q \neq 2,5$ be a rational prime such that $q \not\equiv 1 \pmod{5}$ and let $\fq$ be a prime in $\Q(\sqrt{5})$ above~$q$. It follows from Lemma~\ref{lem:bad_reduction_55p_E_and_FI} that for any integers~$x,y$ with $(x,y)\not=(0,0)$ and~$0\leq x,y\leq q-1$, both elliptic curves~$E_{x,y}$ and~$F_{x,y}$ have good reduction at~$\fq$.  Moreover if $x,y$ are defined by $(a,b)\equiv(x,y)\pmod{q}$ and~$0\le x,y\le q-1$, then $a_{\fq}(E_{a,b})=a_{\fq}(E_{x,y})$. 

The result below follows from Lemma~\ref{L:DF} and our definitions here and in Section~\ref{S:Overview}.
\begin{proposition}
There exists a Hilbert newform~$f$ over $\Q(\sqrt{5})$ of parallel weight $2$, trivial character
and level~$2^6$ such that for any prime $q \neq 2,5$ with $q \not\equiv 1 \pmod{5}$, there exists $(x,y)\in\{0,\dots,q-1\}^2\setminus\{(0,0)\}$ such that we have $p\mid qB_q(E_{x,y},f)$ or $p\mid qB_q(F_{x,y},f)$ respectively if $5 \nmid a+b$ or $5 \mid a+b$.
\label{P:summary}
\end{proposition}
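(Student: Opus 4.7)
The statement is essentially a bookkeeping consequence of Lemma~\ref{L:DF} together with the standard Frobenius trace comparison recalled after the definition of $B_q(E,f)$ in Section~\ref{S:Overview}, so my plan is to carefully assemble the pieces already in place.

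First, I would apply Lemma~\ref{L:DF} to the fixed non-trivial primitive solution $(a,b,c)$ to obtain the Hilbert newform $f$ over $\Q(\sqrt 5)$ of parallel weight $2$, trivial character and level $2^6$, together with a prime ideal $\fp$ above $p$ in its coefficient field, such that $\rhobar_{f,\fp}\cong \rhobar_{E,p}$ if $5\nmid a+b$, and $\rhobar_{f,\fp}\cong \rhobar_{F,p}$ if $5\mid a+b$. This $f$ is the newform claimed by the proposition; note that it depends only on the solution, not on the auxiliary prime $q$.

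Next, given $q\ne 2,5$ with $q\not\equiv 1\pmod 5$, I would set $(x,y)\in\{0,\dots,q-1\}^2$ to be the reduction of $(a,b)$ modulo~$q$. Since the solution is primitive, $\gcd(a,b)=1$, so $(x,y)\ne(0,0)$. For any prime $\fq$ of $\Q(\sqrt 5)$ above $q$, the hypothesis $q\not\equiv 1\pmod 5$ together with Lemma~\ref{lem:bad_reduction_55p_E_and_FI} implies that $E_{a,b}$ and $F_{a,b}$ have good reduction at $\fq$; the same then holds for $E_{x,y}$ and $F_{x,y}$, and one checks directly from the defining equations that $a_{\fq}(E_{a,b})=a_{\fq}(E_{x,y})$ and $a_{\fq}(F_{a,b})=a_{\fq}(F_{x,y})$, as already pointed out in the paragraph preceding the statement.

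To finish, write $C$ for $E_{a,b}$ or $F_{a,b}$ according as $5\nmid a+b$ or $5\mid a+b$, and $C_{x,y}$ for the corresponding reduction. For every prime $\fq\mid q$ not lying above $p$, both $\rhobar_{C,p}$ and $\rhobar_{f,\fp}$ are unramified at $\fq$ (the former by good reduction, the latter because $f$ has level $2^6$ prime to $q$), so comparing traces of Frobenius at $\fq$ under the isomorphism $\rhobar_{C,p}\cong\rhobar_{f,\fp}$ yields $a_{\fq}(C)\equiv a_{\fq}(f)\pmod{\fp}$, hence $p\mid\Norm(a_{\fq}(C_{x,y})-a_{\fq}(f))$. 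Taking the gcd of these norms over $\fq\mid q$ gives $p\mid B_q(C_{x,y},f)$ when $q\ne p$, and the factor of $q$ in the statement absorbs the possibility $q=p$ (where ramification above $p$ could obstruct the trace comparison). There is no real obstacle here: the only minor point to handle cleanly is the inclusion of the factor $q$, which is exactly the standard convention from Section~\ref{S:Overview}.
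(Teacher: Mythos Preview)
Your proposal is correct and follows exactly the paper's approach: the paper's own proof is the single line ``The result below follows from Lemma~\ref{L:DF} and our definitions here and in Section~\ref{S:Overview},'' and you have simply unpacked this, invoking Lemma~\ref{L:DF} for the newform, the paragraph preceding the proposition for the good reduction and trace equality $a_{\fq}(E_{a,b})=a_{\fq}(E_{x,y})$, and the Frobenius trace comparison from Section~\ref{S:Overview} for the divisibility $p\mid qB_q(C_{x,y},f)$.
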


The following summarizes part of Step 5 of the modular method as applied to the Frey elliptic curves $E$ and $F$.
\begin{proposition}
\begin{itemize}
\item[]
 \item[(1)] If $5 \nmid a + b$ and $p\geq 7$ then $\rhobar_{E,p}$ is isomorphic to the mod $p$ representation of one of the curves
 \[E_{1,0}, \quad E_{1,0} \otimes \chi_{-1}, \quad E_{1,0} \otimes \chi_{2}, \quad
  \quad E_{1,0} \otimes \chi_{-2}, \quad E_{1,1}, \quad E_{1,1} \otimes \chi_2;
 \]
 \item[(2)] If $5 \mid a + b$ and $p\geq 11$ then $\rhobar_{F,p}$ is isomorphic to the mod $p$ representation
of one of the curves
\[ F_{1,-1} \qquad \text{or} \qquad F_{1,-1} \otimes \chi_2, \]
\end{itemize}
where $\chi_{D}$ denotes the quadratic character corresponding
to the field $\Q(\sqrt{D})$.
\label{P:generald}
\end{proposition}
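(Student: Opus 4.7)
The plan is to combine Lemma~\ref{L:DF} with an explicit enumeration of all Hilbert newforms at the predicted level, and then to apply the multi-Frey bound of Proposition~\ref{P:summary} to discard every newform whose associated Galois representation is not already accounted for by the curves in the statement.

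By Lemma~\ref{L:DF}, in both cases there is a Hilbert newform $f$ over $\Q(\sqrt{5})$ of parallel weight $2$, trivial character and level $2^{6}$, together with a prime $\fp\mid p$ in its coefficient field, such that $\rhobar_{E,p}\cong\rhobar_{f,\fp}$ in case (1) and $\rhobar_{F,p}\cong\rhobar_{f,\fp}$ in case (2). The first step is therefore Step~5a: use \texttt{Magma} to compute the full new subspace of parallel weight~$2$ Hilbert cusp forms of level $2^{6}$ over $\Q(\sqrt{5})$. Each newform either has rational Fourier coefficients, in which case it corresponds (up to isogeny) to an elliptic curve over $\Q(\sqrt{5})$ of conductor $2^{6}$, or its field of coefficients strictly contains $\Q$.

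For each newform $f$ with non-rational coefficients I would invoke Proposition~\ref{P:summary} with several auxiliary rational primes $q\ne 2,5$ satisfying $q\not\equiv 1\pmod 5$ (e.g.\ $q=3,7,13,17,23,\dots$). For every $(x,y)\in\{0,\dots,q-1\}^{2}\setminus\{(0,0)\}$, both $E_{x,y}$ and $F_{x,y}$ have good reduction at the primes of $\Q(\sqrt{5})$ above $q$, so the integer $q\cdot B_{q}(E_{x,y},f)$ (resp.\ $q\cdot B_{q}(F_{x,y},f)$) is well defined and divisible by~$p$ for some~$(x,y)$. Taking the least common multiple over all such pairs and intersecting the resulting bounds over a handful of $q$ should force $p<7$ in case (1) and $p<11$ in case (2), contradicting the hypothesis and eliminating~$f$.

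For the rational newforms, each is attached to an isogeny class of elliptic curves over $\Q(\sqrt{5})$ of conductor exactly $2^{6}$. Using the formulas \eqref{eq:c4_formulaI}--\eqref{eq:disc_formulaII} and Tate's algorithm, exactly as in the proof of Lemma~\ref{lem:bad_reduction_55p_E_and_FII}, one checks that the six curves in (1) and the two curves in (2) all have conductor $2^{6}$, so each contributes a rational newform at this level. Any further rational newform (in a different isogeny class) would be discarded by the same multi-Frey procedure, now comparing the rational traces $a_{\fq}(f)$ with $a_{\fq}(E_{x,y})$ or $a_{\fq}(F_{x,y})$ across enough auxiliary primes $q$ to force $p<7$ (resp.\ $p<11$). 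The surviving newforms are then precisely those attached to the listed curves, giving the stated conclusion.

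The main obstacle is Step~5a itself: the new subspace of level $2^{6}$ over $\Q(\sqrt{5})$ has sufficiently large dimension that the Hilbert modular forms machinery must be pushed towards the edge of its current practical range. Once the space is available, the multi-Frey elimination is mechanical, although the number of residue pairs $(x,y)$ to test grows quadratically with each auxiliary prime, so the choice of auxiliary primes must be made with some care to keep the computation tractable.
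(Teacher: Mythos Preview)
Your proposal is correct and follows essentially the same approach as the paper: compute all Hilbert newforms at level $2^{6}$ over $\Q(\sqrt{5})$, then for each form take the gcd over auxiliary primes $q$ (the paper uses $q\le 30$) of the product over residue pairs $(x,y)$ of $qB_q(E_{x,y},f)$ (resp.\ $qB_q(F_{x,y},f)$), discarding any form for which this gcd is not divisible by~$p$, and finally identifying the survivors with the listed twists of $E_{1,0}$, $E_{1,1}$ (resp.\ $F_{1,-1}$). Two minor remarks: the rational/non-rational dichotomy you introduce is unnecessary---the paper applies the same elimination uniformly and simply observes a posteriori that the survivors are the listed elliptic curves---and the computation at level $2^{6}$ over $\Q(\sqrt{5})$ is routine rather than near the edge of feasibility.
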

\begin{proof}
Using \cite{programs}, we do the following: we compute all the
newforms over $\Q(\sqrt{5})$ of level~$2^6$, parallel weight 2 and trivial
character. For each such newform $h$, we compute $q\mathcal{E}_q(h)$ and $q\mathcal{F}_q(h)$
for all primes $q \leq 30$ as above where $\mathcal{E}_q(h)$ (resp.  $\mathcal{F}_q(h)$) is the product of all~$B_q(E_{x,y},h)$ (resp.~$B_q(F_{x,y},h)$) over the pairs~$(x,y)\not=(0,0)$ of integers  in the range~$\{0,\dots, q-1\}$.

Suppose $5 \nmid a+b$. From the previous proposition it follows that, for each $h$, 
if $p$ does not divide the gcd of all $q\mathcal{E}_q(h)$ we can discard $h$ for that $p$. 
This allows us to discard all except $6$ newforms for $p \geq 7$ ;  we identify the remaining $6$ newforms with twists of the Frey elliptic curves~$E_{1,0}$ and $E_{1,1}$.

Suppose $5 \mid a+b$. From the previous proposition it follows that, for each $h$, 
if $p$ does not divide the gcd of all $q\mathcal{F}_q(h)$ we can discard $h$ for that $p$. 
For $p \geq 11$, this allows to discard all except $2$ newforms which correspond to $F_{1,-1}$ and its quadratic twist by~$2$. We also note for later use that~$p\ge 7$ works for all except three other newforms~$f$, all of them satisfying~$a_3(f)=4$.
\end{proof}

\subsection{Proof of Theorem~\ref{T:main5}}
\label{S:proofMain5}
We will now prove Theorem~\ref{T:main5} under the slightly more general situation where $d$ is divisible by~$3$ but not by any prime $\ell \equiv 1 \pmod 5$.

From $3 \mid d$ and \cite[Lemma~2.2]{DF1}, it follows that $3 \mid a+b$. This imposes a very strong restriction on the value of the trace of Frobenius of $E_{a,b}$ at (the unique prime ideal above)~$3$ in~$\Q(\sqrt{5})$. Namely, the elliptic curve $E_{a,b}$ reduces modulo~$3$ to the curve defined by $y^2=x^3-\bar{\omega}^2x$. Hence, we have $a_3(E_{a,b})=6$.

Note that the elliptic curves~$E_{x,y}$ that appear in part~(1) of Proposition~\ref{P:generald}  satisfy $x+y\not\equiv0\pmod{3}$. Therefore, for $p\geq7$, one may hope to discard them by computing their trace of Frobenius at~$3$. Indeed, we find that the $a_3$ coefficient of the curves $E_{1,0}$, $E_{1,0} \otimes \chi_{-1}$, $E_{1,0} \otimes \chi_{2}$, $E_{1,0} \otimes \chi_{-2}$, $E_{1,1}$, and $E_{1,1} \otimes \chi_2$ is~$4$. We have thus proved the following result.
\begin{proposition}
\label{contra2}
If~$p\ge 7$ and $d$ is divisible by~$3$ but not by any prime $\ell \equiv 1 \pmod 5$, then we necessarily have $5\mid a + b$.
\end{proposition}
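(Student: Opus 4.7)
The plan is to argue by contradiction: assume $5 \nmid a+b$ and produce a mod~$p$ congruence that cannot hold for $p \geq 7$. Under this assumption, Proposition~\ref{P:generald}(1) tells us that $\overline{\rho}_{E_{a,b},p}$ is isomorphic to $\overline{\rho}_{E',p}$ for $E'$ one of the six explicit twists of $E_{1,0}$ or $E_{1,1}$ listed there. I will extract a contradiction by comparing traces of Frobenius at a single well-chosen prime of good reduction for both sides.

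The natural choice is the prime above~$3$. First, since $3 \mid d$ and $a^5+b^5 = dc^p$, applying \cite[Lemma~2.2]{DF1} forces $3 \mid a+b$. Next, $3$ is inert in $\Q(\sqrt{5})$, so let $\mathfrak{q}_3$ denote the unique prime above~$3$, with residue field~$\F_9$. Because $3 \not\equiv 1 \pmod 5$, Lemma~\ref{lem:bad_reduction_55p_E_and_FI} guarantees that $E_{a,b}$ has good reduction at~$\mathfrak{q}_3$; the same is immediate for each of the six candidate curves~$E'$, since they are all specializations (or quadratic twists thereof) of the Frey family evaluated at pairs $(x,y)$ with $x+y\not\equiv 0\pmod 3$.

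I would then compute both sides explicitly. Using $a+b\equiv 0\pmod 3$ to simplify the defining equation of $E_{a,b}$ modulo~$\mathfrak{q}_3$, the reduction becomes $y^2=x^3-\bar{\omega}^2 x$ over $\F_9$, from which a direct point count yields $a_{\mathfrak{q}_3}(E_{a,b})=6$. A parallel point count on each of $E_{1,0}$, $E_{1,0}\otimes\chi_{-1}$, $E_{1,0}\otimes\chi_{2}$, $E_{1,0}\otimes\chi_{-2}$, $E_{1,1}$, and $E_{1,1}\otimes\chi_{2}$ over~$\F_9$ gives the uniform value $a_{\mathfrak{q}_3}(E')=4$ in all six cases.

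The isomorphism of semisimple mod~$p$ representations then forces $a_{\mathfrak{q}_3}(E_{a,b})\equiv a_{\mathfrak{q}_3}(E')\pmod p$, so $p\mid 6-4=2$, contradicting $p\geq 7$; hence $5\mid a+b$ as claimed. There is no real obstacle here once Proposition~\ref{P:generald}(1) is in hand: the argument is essentially forced, and the only small verification is that all six candidate twists share the same $a_{\mathfrak{q}_3}$, which is a direct computation well-suited to \cite{programs}.
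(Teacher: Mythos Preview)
Your proposal is correct and follows essentially the same argument as the paper: both use $3\mid d$ together with \cite[Lemma~2.2]{DF1} to obtain $3\mid a+b$, compute that the reduction of $E_{a,b}$ at the inert prime above~$3$ is $y^2=x^3-\bar{\omega}^2x$ with trace $a_3(E_{a,b})=6$, verify that all six candidate curves from Proposition~\ref{P:generald}(1) have $a_3=4$, and conclude $p\mid 2$. Your write-up is slightly more explicit about why both sides have good reduction at~$\mathfrak{q}_3$, but the strategy and the key computation are identical to the paper's.
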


\begin{remark}
The previous type of argument does not always work; for instance, when $r = 7$ and $d = 3$ in equation~\eqref{E:rrp}, the condition $3 \mid a + b$ does not distinguish $E_{0,1}$ and $E_{1,-1}$ by traces of Frobenius at $3$, where $E_{a,b}$ is the Frey elliptic curve in the last paragraph of \cite[p.\ 630]{F}.
\end{remark}

To prove Theorem~\ref{T:main5} it now suffices to notice that the cases $p = 2$, $p=3$ and~$p=5$ follow from \cite[Theorem 1.1]{BenSki}, \cite[Theorem 1.5]{BenVatYaz} and~\cite[Th\'eor\`eme IX]{Di28} respectively. Hence we can assume~$p\geq7$. Applying Propositions~\ref{L:BD}~and~\ref{contra2} concludes the proof.

\begin{remark}
Note we cannot improve on the result in \cite{DF1} for $r = 5$ and $d = 2$ since we do not have the condition $3 \mid a + b$ to eliminate the curves in Proposition~\ref{P:generald} (1); furthermore, the additional use of the Frey elliptic curve $W_{a,b}$ also does not help because $W_{1,1}$ is an elliptic curve without complex multiplication.
\end{remark}

\section{Partial results for $x^5 + y^5 = d z^p$ with $d = 1, 2$}
\label{partial}

It is sometimes possible to resolve equation $\eqref{E:rrp}$ by assuming additional $q$-adic conditions to avoid the obstructing trivial solutions. 
In this section we provide such examples regarding the equation  
\begin{equation}
x^5 + y^5 = dz^p, \qquad \text{ where } \quad d \in \{ 1, 2 \}.
\label{E:55pd}
\end{equation}
First note that the conditions on $c$ of Theorem~\ref{T:partial} can easily be translated 
into divisibility conditions on $a+b$. More precisely, Theorem~\ref{T:partial}
follows from the following two theorems.
\begin{theorem}
\label{T:main5a} Assume $d = 1,2$. Then, for all primes $p$, 
there are no non-trivial primitive solutions $(a,b,c)$
to~\eqref{E:55pd} satisfying $5 \mid a + b$.
\end{theorem}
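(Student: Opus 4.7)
The plan is to run a multi-Frey argument combining the Frey curves $F_{a,b}$ over $\Q(\sqrt{5})$ from Section~\ref{S:overQ5} with the Darmon Frey curve $W_{a,b}$ over $\Q$ from Section~\ref{S:overQ}. First, the small exponents $p=2,3,5$ follow from \cite{BenSki}, \cite{BenVatYaz}, and \cite{Di28}, as in the proof of Theorem~\ref{T:main5}; the case $p=7$ is covered for $d=1$ by \cite[Th\'eor\`eme~1.1]{Bill} and \cite[Proposition~3.3]{DahSik}, and for $d=2$ will be handled by the same modular method applied to $W_{a,b}$ at the finitely many relevant small newforms. One may therefore assume $p\ge 11$.

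Since $d\in\{1,2\}$ has no prime factor $\equiv 1\pmod 5$ and $5\mid a+b$, Proposition~\ref{P:generald}(2) applies and gives
\[
\rhobar_{F_{a,b},p}\ \cong\ \rhobar_{F_{1,-1},p}\quad\text{or}\quad\rhobar_{F_{1,-1}\otimes\chi_2,\,p}.
\]
Both candidates are exactly the representations attached to the trivial solution $(a,b,c)=(1,-1,0)$ (and its twist by the quadratic character~$\chi_2$), so the rest of the argument has to distinguish the Frey curve of a putative non-trivial solution from these.

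For this, I would bring in $W_{a,b}$ as a second Frey curve. By Lemma~\ref{lemma:W}, the hypothesis $5\mid a+b$ forces $\vv_5(j(W_{a,b}))<0$, so $W_{a,b}$ has potentially multiplicative reduction at~$5$; modularity over~$\Q$ together with standard irreducibility and level-lowering results then yield $\rhobar_{W_{a,b},p}\cong\rhobar_{g,p}$ for a classical newform~$g$ of weight~$2$ and trivial character in a short, explicit list of levels dividing $2^{\alpha}\cdot 5^{\beta}$ (with $\alpha$ as in Lemma~\ref{lemma:W}). The key asymmetry, which is what makes multi-Frey effective here, is that $W_{1,-1}$ is singular (its discriminant vanishes), so no newform~$g$ in this list comes from the trivial solution that obstructs the $F$-side; the symmetry that blocked elimination on the $F$-side is thus broken on the $W$-side.

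Elimination then proceeds by combining the two isomorphisms through the $B_q$-type tests of Section~\ref{S:Overview} (compare Proposition~\ref{P:summary}) at small auxiliary primes $q\ne 2,5$, computed in \texttt{Magma} as in \cite{programs}. For each pair (candidate on the $F$-side, newform $g$ on the $W$-side), the simultaneous congruences coming from both Frey curves force $p$ into a finite explicit set, which can be cleared; any residual exceptional primes can be handled by an image-of-inertia argument (Section~\ref{S:inertia}) at $2$ or at a prime $\fq$ above some $\ell\equiv 1\pmod 5$ dividing $\phi_5(a,b)$. The main obstacle I expect is precisely the coincidence of the $F$-side candidates with the trivial solution: at any prime $\fq\mid\phi_5(a,b)$, the curve $F_{a,b}$ has multiplicative reduction while $F_{1,-1}$ has good reduction, yet level lowering kills the ramification at~$\fq$ in $\rhobar_{F_{a,b},p}$, so the naive ``different-inertia'' version of the image-of-inertia argument fails at such primes; the substantive content of the proof is to extract the extra information from $W_{a,b}$ to force a contradiction for every non-trivial primitive solution with $5\mid a+b$.
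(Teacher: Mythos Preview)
Your overall multi-Frey framework (combining $F_{a,b}$ over $\Q(\sqrt{5})$ with $W_{a,b}$ over $\Q$) is the same as the paper's, but you are missing the concrete mechanism that actually makes the elimination succeed, and your treatment of $p=7$ for $d=2$ is not a proof.

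The decisive step, which replaces your vague ``combined $B_q$-tests'', is this: after Proposition~\ref{P:generald}(2) (and its proof, which for $p=7$ leaves three additional forms), \emph{all} candidate newforms on the $F$-side satisfy $a_3=4$. A direct computation gives $a_3(F_{a,b})\in\{-2,6\}$ whenever $3\nmid a+b$, so the congruence $a_3(F_{a,b})\equiv 4\pmod p$ with $p\ge 7$ forces $3\mid a+b$. This is the actual output of the $F$-side; it is a $2$-adic/$3$-adic constraint on the solution, not a bound on $p$.

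With $3\mid a+b$ in hand, Lemma~\ref{lemma:W} shows $W_{a,b}$ has multiplicative reduction at $3$. Level lowering for $\rhobar_{W,p}$ then removes $3$ from the level, and the standard lowering relation at a removed multiplicative prime gives $p\mid (3+1)^2-a_3(g)^2$ for the surviving newform $g$ (at level $2^\alpha\cdot 5^2$). By the Hasse bound this forces $p=7$ and $a_3(g)=\pm 3$. The four such $g$ all correspond to elliptic curves with potentially good reduction at $5$ and $\#\rhobar_{g,7}(I_5)\in\{3,6\}$, while $5\mid a+b$ makes $W$ potentially multiplicative at $5$ with $\#\rhobar_{W,7}(I_5)\in\{2,14\}$; Version~1 of the image-of-inertia argument at the prime $5$ (not at $2$, and not at primes dividing $\phi_5(a,b)$ as you suggest) finishes the proof. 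In particular $p=7$ is handled inside this argument, uniformly for $d=1,2$; there is no need to carve it out separately, and your sentence ``for $d=2$ will be handled by the same modular method'' does not constitute a proof.

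Two minor corrections: for $p=5$ the correct references here are Fermat's Last Theorem (case $d=1$) and \cite{DarmonMerel} (case $d=2$), not \cite{Di28}, which in this paper is invoked only for $d=3$; and your observation that $W_{1,-1}$ is singular, while true, is not what drives the contradiction --- the contradiction comes from the forced multiplicative reduction at $3$, which is invisible in your outline.
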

\begin{theorem}\label{T:main5b}
Assume $d=1$ (resp.\ $d=2$). Then, for all primes $p$, 
there are no non-trivial primitive solutions to~\eqref{E:55pd} satisfying
$2 \mid a + b$ (resp.\ $4 \mid a + b$).
\end{theorem}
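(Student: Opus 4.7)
I will prove Theorem~\ref{T:main5b} by the multi-Frey technique, combining the Frey curve $W_{a,b}/\Q$ of Section~\ref{S:overQ} with the Frey curve $E_{a,b}/\Q(\sqrt{5})$ of Section~\ref{S:overQ5}. The small primes $p \in \{2,3,5\}$ are handled by the references already invoked in the proof of Theorem~\ref{T:main5}, so I assume $p\geq 7$. By Theorem~\ref{T:main5a} (which I plan to establish first) I may assume $5 \nmid a+b$. Since $\gcd(a,b)=1$ and $2\mid a+b$ forces both $a$ and $b$ odd, $\phi_5(a,b)$ is odd; comparing $\vv_2(a^5+b^5)=\vv_2(a+b)$ with $\vv_2(dc^p)$ gives $\vv_2(a+b)\geq p$ if $d=1$ and $\vv_2(a+b)\geq p+1$ if $d=2$. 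In either case $\vv_2(a+b)\geq 7$, so by Lemma~\ref{lemma:W} the curve $W_{a,b}$ has conductor exponent $\alpha=1$ at~$2$ and thus multiplicative reduction there.

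Modularity together with Ribet-style level lowering (valid because, at each odd prime $\ell\neq 5$ of bad reduction of $W_{a,b}$, the minimal discriminant exponent is divisible by~$p$) and the irreducibility of $\rhobar_{W,p}$ for $p\geq 7$ (which holds for $d=1$ by \cite[Th\'eor\`eme~1.1]{Bill} and admits a parallel argument for $d=2$) yields a classical newform $g\in S_2(\Gamma_0(50))^{\mathrm{new}}$ with $\rhobar_{W,p}\cong\rhobar_{g,p}$. The space $S_2(\Gamma_0(50))^{\mathrm{new}}$ has dimension~$2$, so $g$ corresponds to one of the two isogeny classes of elliptic curves of conductor~$50$. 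In parallel, Proposition~\ref{P:generald}(1) says that $\rhobar_{E_{a,b},p}$ is isomorphic to the mod $p$ representation of one of six specific twists of $E_{1,0}$ and $E_{1,1}$.

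It remains to discard each of the (at most $2\times 6=12$) candidate pairs $(g,E_0)$. Following Proposition~\ref{P:summary}, for suitable auxiliary primes $q\nmid 10$ I enumerate residue classes $(x,y)\pmod q$ compatible with $\vv_2(x+y)\geq 3$, and for each pair I compute and combine gcds of $q\,B_q(W_{x,y},g)$ (over $\Q$) and $q\,B_q(E_{x,y},E_0)$ (over $\Q(\sqrt 5)$) to bound $p$. The chief obstacle will be candidate pairs coming from a trivial evaluation of the Frey curves, for instance $(g,E_0)=(W_{1,0},E_{1,0})$ (note that $W_{1,0}$ has conductor exactly~$50$); for such pairs, trace congruences are automatically satisfied at all primes of good reduction, so no bound on $p$ can be extracted by trace comparisons alone. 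To dispose of these I invoke the image of inertia argument at~$2$ of Section~\ref{S:inertia}: the hypothesis $\vv_2(a+b)\geq 7$ pins down $\vv_2(\Delta_{\min}(W_{a,b}))=4\vv_2(a+b)-8$ modulo~$p$ (equal to $-8$ or $-4$ mod $p$ according as $d=1$ or $d=2$), whereas the corresponding valuation for any trivial Frey evaluation is a small fixed integer; this mismatch in the $2$-adic Tate parameter, together with the split/non-split discrepancy that may arise, forces an incompatibility of the local representations at~$2$ and yields the required contradiction.
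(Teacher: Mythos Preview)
Your route differs from the paper's and, as written, has a real gap in the final step. The paper does not bring in $W_{a,b}$ at all here: after reducing to $8\mid a+b$ and $5\nmid a+b$, it works solely with $E_{a,b}$ over~$\Q(\sqrt 5)$ and the six candidates $Z$ of Proposition~\ref{P:generald}(1). Each $Z$ is eliminated by Version~2 of the image of inertia argument at the prime~$2$ of $\Q(\sqrt 5)$, where both $E_{a,b}$ and $Z$ have additive, potentially good reduction (recall $\vv_2(N_E)=6$). Concretely (Proposition~\ref{P:fieldsAt2r5}), for each $Z$ one passes to a well-chosen subfield $M$ of its $3$-division field over which $Z$ acquires good reduction at the prime above~$2$, and then verifies via Tate's algorithm on finitely many residue classes $(a,b)\bmod 2^4$ that $E_{a,b}/M$ does not, whenever $8\mid a+b$.

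Your proposed endgame via $W_{a,b}$ does not work as stated. Once $\vv_2(a+b)\ge 3$, the curve $W_{a,b}$ has \emph{multiplicative} reduction at~$2$, and so does every elliptic curve of conductor~$50$. For two curves with multiplicative reduction at~$\ell$, the restriction $\rhobar|_{I_\ell}$ is nontrivial unipotent whenever $p\nmid\vv_\ell(\Delta_{\min})$; the residue of $\vv_\ell(\Delta_{\min})\bmod p$ beyond ``zero or not'' is \emph{not} an invariant of the local mod~$p$ representation, so your ``mismatch in the $2$-adic Tate parameter'' gives nothing. The split/non-split type of $W_{a,b}$ at~$2$ varies with $(a,b)$, so that cannot yield a uniform contradiction either. (Incidentally, $W_{1,0}$ has $ab\equiv 0\pmod 4$, so by Lemma~\ref{lemma:W} its conductor is $2^3\cdot 5^2=200$, not $50$; no trivial evaluation of $W$ lands at level~$50$, so the obstruction you anticipate is not even present.) That said, your idea of invoking $W$ can be salvaged, and in fact more simply than you propose: both level-$50$ newforms have $a_3=\pm1$ and correspond to curves without rational $2$-torsion, whereas $W_{a,b}$ carries a rational $2$-torsion point; a direct trace comparison at $q=3$ (treating separately the cases $3\nmid a+b$ and $3\mid a+b$) then forces $p\le 5$, with no inertia argument needed.
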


We want to emphasize that, in the proof of
Theorem~\ref{T:main5a}, using the multi-Frey technique we are able to force a Frey
curve to have multiplicative reduction at 3. 

These results, and their proofs, should illustrate clearly to the reader
that the obstruction to solving~\eqref{E:55pd} with $d = 1$ (resp.\ $d = 2$) is that none of the Frey curves we use are sensitive to the trivial solutions $\pm (1,0,1), \pm (0,1,1)$ (resp.\  $\pm (1,1,1)$).

\subsection{Proof of Theorem~\ref{T:main5a}}
\label{S:byproduct2}

The cases $p = 2$ and $p=3$ follow from \cite[Theorem 1.1]{BenSki}, \cite[Theorem 1.5]{BenVatYaz},  respectively. It follows from Fermat's Last Theorem and the main theorem of \cite{DarmonMerel} 
that the result holds for $p=5$. Hence we can assume~$p\geq7$.

Let $(a,b,c)$ be a putative non-trivial primitive solution to equation
\eqref{E:55pd} with $d = 1, 2$, exponent $p \geq 7$ and $5 \mid a + b$.

By part (2) of Proposition~\ref{P:generald} we have 
$\rhobar_{F_{a,b},p} \cong \rhobar_{A,p}$, 
where $A = F_{1,-1}$ or $F_{1,-1} \otimes \chi_2$ when $p \geq 11$; furthermore, from its proof 
it follows that for $p=7$ we can have $\rhobar_{F,p} \cong \rhobar_{A,p}$ or 
$\rhobar_{F,p} \cong \rhobar_{f,p}$, where $f$ is one of other four possible Hilbert newforms over~$\Q(\sqrt{5})$ of parallel weight~$2$, trivial character
and level~$2^6$.

The traces of Frobenius at 3 of these six newforms satisfy $a_3(A) = a_3(f) = 4$;
using \texttt{Magma} to compute $a_3(F_{a,b})$ shows that $3
\mid a + b$ (if not, then $a_3(F_{a,b}) \in\{-2,6\}$ and we get that $p \mid 6$, which is not the case). This means the curve $W = W_{a,b}$ from Section~\ref{S:overQ}
has multiplicative reduction at $3$ (see Lemma~\ref{lemma:W}). Note that this is another instance of using the multi-Frey technique.

From \cite[Proposition~3.1]{Bill} we have that the representation~$\rhobar_{W,p}$ is irreducible.
A standard application of the modular method with $W$ (which follows from Propositions~3.3 and~3.4 of ~\cite{Bill}) gives
that $\rhobar_{W,p} \cong
\rhobar_{g,p}$, where $g$ is a rational newform of weight~$2$, trivial Nebentypus and level $2^4 \cdot 5^2$, $2^3 \cdot 5^2$, or $2 \cdot 5^2$ for $d = 1$ and $2^4
\cdot 5^2$, $2 \cdot 5^2$ for $d = 2$ respectively. All newforms in these spaces correspond to (isogeny classes of) elliptic curves over $\Q$. Since level lowering is happening at the prime 3, 
we must have that $p \mid (3+1)^2 - a_3(g)^2$. By the Hasse bound and our assumption, it implies  $p=7$  and $a_3(g)=\pm3$.

We then notice using~\cite{Cre97} that there are four newforms~$g$ of these levels for which we have~$a_3(g)=\pm3$. Moreover they all correspond to elliptic curves with potentially good reduction at~$5$ and whose minimal discriminant has valuation~$2$ or~$8$ at~$5$. According to~\cite[p.~312]{Ser72} it follows that $\#\rhobar_{g,7}(I_5) = 3$  or $6$, where $I_5$ is an inertia subgroup at 5. 

On the other hand, since $5 \mid a+b$, the curve~$W$ has potentially multiplicative reduction at~$5$
(see Lemma~\ref{lemma:W}). Hence by the theory of Tate curves, we have $\#\rhobar_{W,7}(I_5) = 2$ or $14$.
According to Version~$1$ of the `image of inertia argument' explained in Section~\ref{S:inertia}, this gives the desired contradiction.

\subsection{Proof of Theorem~\ref{T:main5b}}
\label{S:byproduct1}
As in the previous proof, the result is known for $p \le 5$.
Let $(a,b,c)$ be a putative non-trivial primitive solution to equation
\eqref{E:55pd} with $d = 1$ and $2 \mid a + b$ (resp.\ $d = 2$ and $4 \mid a +
b$) for $p \geq 7$. 

In the case $d = 1$, the condition $2 \mid a + b$ implies that in fact $8
\mid a + b$, because $2 \mid c$, $p \geq 7$ and $2 \nmid \phi_5(a,b)$, where
we recall $a^5 + b^5 = (a+b)\phi_5(a,b) = dc^p$;
in the case $d = 2$, the condition $4 \mid a + b$ also implies that
in fact $8 \mid a + b$. So  we now assume $8 \mid a + b$.

By Theorem~\ref{T:main5a}, we may assume $5 \nmid a+b$, and then invoking
part (1) of Proposition~\ref{P:generald} we deduce $\rhobar_{E,p} \cong \rhobar_{A,p}$
where $A = E_{1,0}$,  $E_{1,0} \otimes \chi_{-1}$, $E_{1,0} \otimes
\chi_{2}$, $E_{1,0} \otimes \chi_{-2}$, $E_{1,1}$, or $E_{1,1} \otimes \chi_2$.

The result now follows from version 2 of the image of inertia argument (see Section~\ref{S:inertia}).
Indeed, from $\rhobar_{E,p} \cong \rhobar_{A,p}$ we know that the inertial 
field at $2$ of $E$ and $A$ must be the same. 
By Proposition~\ref{P:fieldsAt2r5} below
and the assumption $8 \mid a + b$, we see 
this is not possible, as desired.

Write $L_{a,b} = L_{E_{a,b}}$ for the inertial field at $2$ corresponding
to the Frey elliptic curve $E_{a,b}$ (i.e.\ the field fixed by the kernel of
$\rhobar_{E_{a,b},m}(I_2)$ for any $m \ge 3$ coprime to $2$).
Respectively, for any integers $x,y$, we write $L_{x,y,D}$ for the inertial field at~$2$
corresponding to the curve~$E_{x,y} \otimes \chi_D$.

\begin{proposition} Suppose $(a,b,c)$ is a non-trivial primitive solution to
\eqref{E:55pd} satisfying $8 \mid a + b$. Then $L_{a,b} \not=  L_{1,0},
\; L_{1,0,-1}, \; L_{1,0,2}, \; L_{1,0,-2}, \; 
L_{1,1}, \; L_{1,1,2}$. \label{P:fieldsAt2r5}
\end{proposition}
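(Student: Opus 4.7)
The plan is to apply Version~2 of the image of inertia argument at the prime $\fq_2$ above $2$ in $\Q(\sqrt{5})$. Let $L$ denote the completion at $\fq_2$, which is the unramified quadratic extension of $\Q_2$ and has $2$ as uniformizer. Since $\gcd(a,b)=1$ and $8\mid a+b$, both $a$ and $b$ are odd, and from $\psi_5(a,b)\equiv \omega ab\pmod{2}$ we get $\vv_2(\psi_5(a,b))=0$; the formulas \eqref{eq:c4_formulaI}--\eqref{eq:disc_formulaI} then give $(\vv_2(c_4(E_{a,b})),\vv_2(\Delta(E_{a,b})))=(4,6)$, so $E_{a,b}$ has potentially good reduction at $\fq_2$.

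First, I would identify $L_{a,b}$ uniformly in the solution $(a,b)$ satisfying $8\mid a+b$. The strong divisibility $\vv_2(a+b)\ge 3$ makes the coefficient $2(a+b)$ in the model of $E_{a,b}$ essentially negligible modulo a high power of $2$. A change of variables $x\mapsto u^2 x,\ y\mapsto u^3 y$ over a suitable ramified extension $M/L$ then produces an integral model with good reduction at the unique prime above $\fq_2$, from which one reads off the inertial field $L_{a,b}$. The conclusion should be that $L_{a,b}$ depends only on the residue classes of $(a+b)/8$ and of $ab$ modulo small powers of~$2$, not on the full solution.

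Second, I would compute each of the six reference fields $L_{1,0}$, $L_{1,0,-1}$, $L_{1,0,2}$, $L_{1,0,-2}$, $L_{1,1}$, $L_{1,1,2}$ by the same procedure applied to the explicit curves $E_{1,0}$, $E_{1,1}$ and their quadratic twists over $L$. For each $D\in\{-1,2,-2\}$, the character $\chi_D$ is ramified over $L$ (since $\sqrt{D}\notin L$), so the inertial field of $E_{x,y}\otimes\chi_D$ is determined by that of $E_{x,y}$ together with the quadratic extension $L(\sqrt{D})/L$. These computations can be carried out by Tate's algorithm over ramified quadratic or biquadratic extensions of $L$, or mechanically in \texttt{Magma} \cite{programs}.

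The final step is the comparison: tabulate the six reference fields and verify that $L_{a,b}$ matches none of them. The main obstacle is step~1, namely extracting a description of $L_{a,b}$ that is at once explicit enough for pairwise comparison and uniform across all solutions satisfying $8\mid a+b$. The key leverage is precisely this hypothesis: it forces the $2$-adic behaviour of $E_{a,b}$ into a single reduction pattern incompatible with each of the six twisted models coming from the trivial solutions $\pm(1,0,1),\pm(0,1,1),\pm(1,1,1)$ of~\eqref{E:55pd} (reflecting the obstruction to the modular method noted in the introduction of Section~\ref{partial}). Once the fields are tabulated, the comparison itself is mechanical and completes the proof.
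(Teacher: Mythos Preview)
Your outline is a legitimate strategy in principle, but it is not the paper's, and you have not carried out the part you yourself flag as the main obstacle. You propose to compute $L_{a,b}$ itself---the minimal extension of $L^{\rm un}$ over which $E_{a,b}$ acquires good reduction---uniformly in $(a,b)$ with $8\mid a+b$, then separately compute each of the six reference fields, and compare. The paper does something asymmetric and far more practical: for each reference curve $Z$ it takes an explicit finite extension $M$ of $\Q(\sqrt{5})$ inside the $3$-division field of $Z$ (so $Z/M$ has good reduction at the prime $\fq'$ above~$2$), and then checks that $E_{a,b}/M$ does \emph{not} have good reduction at~$\fq'$ when $8\mid a+b$. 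This is exactly the form of Version~2 stated in Section~\ref{S:inertia}: one need never identify $L_{a,b}$, only find a single $M$ distinguishing the two curves. The periodicity that makes the check on $E_{a,b}$ a finite computation is supplied by \cite[Lemma~2.1]{BeChDaYa}: once one knows $(a,b)\bmod 2^4$, the reduction type at $\fq'$ is determined (provided it is $II$, $II^*$ or $I_0^*$), so a short loop in \texttt{Magma} over residues suffices.

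Your step~1 is where the gap lies. Saying that ``a change of variables over a suitable ramified extension $M/L$ produces good reduction'' and that ``the conclusion should be that $L_{a,b}$ depends only on residue classes'' is a hope, not an argument: you have not named $M$, nor shown its degree, nor identified which of the finitely many totally ramified extensions of $L^{\rm un}$ of that degree it is---and that identification is precisely what any direct comparison of inertial fields would require. The paper sidesteps all of this by reversing the roles of $E_{a,b}$ and $Z$: the explicit curve provides the test field, and the Frey curve only needs to fail the good-reduction test there.
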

\begin{proof}
This is verified using \cite{programs} by considering a suitable subfield~$M$ of the $3$-division field of~$Z$ over $\Q(\sqrt{5})$, where $Z = E_{1,0}$,  $E_{1,0} \otimes \chi_{-1}$, $E_{1,0} \otimes \chi_{2}$, $E_{1,0} \otimes \chi_{-2}$, $E_{1,1}$, or $E_{1,1} \otimes \chi_2$, with the property that $Z$ has good reduction at a prime above~$2$ of~$M$, but $E_{a,b}$ does not have good reduction at this prime above $2$ of $M$ if $8 \mid a + b$.  It turns out that we can take~$M$ to be the subfield generated by the~$x$ and~$y$ coordinates of a choice of a $3$-torsion point of~$Z$.

We have the following two cases:
\begin{itemize}
\item[(a)] For $Z = E_{1,1}, E_{1,1} \otimes \chi_2$, the choice of $M$ has degree $4$ over $\Q(\sqrt{5})$. Let $\fq'$ be the unique prime above $2$ of $M$ with ramification index $4$.
\item[(b)] For $Z = E_{1,0}, E_{1,0} \otimes \chi_{-1},E_{1,0} \otimes \chi_2,E_{1,0} \otimes \chi_{-2}$, the choice of  $M$ has degree $8$ over $\Q(\sqrt{5})$. Let $\fq'$ be the unique prime above $2$ of $M$ with ramification index $8$.
\end{itemize}
We remark the the full $3$-division field of $Z$ has degree $8$ and $48$ over $\Q(\sqrt{5})$, in cases (a) and (b), respectively. Thus, the choice of the smaller subfield $M$ makes the computation feasible in case (b).

%To show that $E_{a,b}$ does not have good reduction at the prime $\fq'$ of $M$ if $8 \mid a +b$, we note that $v_{\fq'}(\Delta) = 24$ and $48$, in cases (a) and (b), respectively. 

Let~$a'$, $b'$ be coprime integers and consider the base change to~$M$ of~$E = E_{a,b}$ and~$E'=E_{a',b'}$. We have~$v_{\fq'}(\Delta(E)) = v_{\fq'}(\Delta(E')) = 24$ and $48$, in cases (a) and (b), respectively. 

Write~$a_4$, $a_4'$, $a_6$, $a_6'$ for~$a_4(a,b)$, $a_4(a',b')$, $a_6(a,b)$, $a_6(a',b')$ respectively and recall that~$a_4$, $a_4'$ and~$a_6$, $a_6'$ are homogeneous polynomials of degree~$4$ and~$6$ with coefficients in the integer ring of~$\Q(\sqrt{13})$, respectively.

Set~$k = 2$ and~$k = 4$ in cases (a) and (b), respectively. Suppose that the reduction type of $E'$ is either $II$, $II^*$ or $I_0^*$ and we have that $v_{\fq'}(a_4 - a'_4) \geq 4\cdot k$ and $v_{\fq'}(a_6 - a_6') \geq 6\cdot k$. By \cite[Lemma 2.1]{BeChDaYa} applied to~$E$ and~$E'$ over~$M$ with~$k$, the reduction types of $E$ and $E'$ at $\fq'$ are the same, and hence the conductor exponents at $\fq'$ of $E$ and $E'$ are the same.

%Consider now $E'=E_{a',b'}$ and suppose that the reduction type of $E'$ is either $II, II^*$ or $I_0^*$ and we have that both $v_{\fq'}(a-a')$ and $v_{\fq'}(b-b')$ are $\ge 6 \cdot 4 = 24$. By \cite[Lemma 2.1]{BeChDaYa}, the reduction types of $E$ and $E'$ at $\fq'$ are the same, and hence the conductor exponents at $\fq'$ of $E$ and $E'$ are the same.  

Therefore, if $(a,b) \equiv (a',b') \pmod{2^3}$ holds, then the assumptions of the previous paragraph are satisfied and then the conductor exponent at $\fq'$ of $E_{a,b}$ is the same as that of $E_{a',b'}$, provided the reduction type of $E_{a',b'}$ is $II$, $II^*$ or $I_0^*$. Assuming $8 \mid a+b$ and using \cite{programs}, it is thus shown that $E_{a,b}/M$ has conductor exponent $\not= 0$ at the prime of $M$ above $2$, whereas $Z/M$ has good reduction at the prime of $M$ above $2$.
\end{proof}

\section{A result on the second case}\label{S:second_case}

In this section, we prove Theorem~\ref{T:second case}. The following proposition is known to experts, but we have not been able to find a suitable reference for it, so we include a proof.

\begin{proposition}  Let $f$ be a (classical) newform of weight $2$, trivial character and level $N$.
 Let $p$ be an odd prime not dividing $N$, and let $a_p$ denote the $p$-th Fourier coefficient of $f$. Then, a necessary condition for
the existence of a congruence between the $p$-adic Galois representation attached
 to $f$ and the one attached to a newform $g$ of level $pN$, trivial character and weight~$2$ is~:
 \begin{equation*} \label{P:levelraising}
 a_p \equiv \pm 1 \pmod{p}. 
\end{equation*}
\end{proposition}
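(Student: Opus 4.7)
The plan is to exploit local--global compatibility at $p$ for the newform $g$. Since $p\nmid N$ but the level of $g$ is $pN$ and $g$ has trivial character, the exact power of $p$ dividing the conductor of $g$ is $p^{1}$, so the local component $\pi_{g,p}$ must be an unramified (quadratic) twist of the Steinberg representation of $\GL_{2}(\Q_{p})$. I would translate this, via local--global compatibility, into the statement that $\rho_{g,p}|_{G_{\Q_{p}}}$ sits in a non-split exact sequence
\[
0 \longrightarrow \eta\,\epsilon_{p} \longrightarrow \rho_{g,p}|_{G_{\Q_{p}}} \longrightarrow \eta \longrightarrow 0,
\]
where $\epsilon_{p}$ is the $p$-adic cyclotomic character and $\eta$ is an unramified character of $G_{\Q_{p}}$. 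The trivial nebentypus forces $\det\rho_{g,p}|_{G_{\Q_{p}}} = \epsilon_{p}$, hence $\eta^{2}=1$; in particular $\eta(\Frob_{p}) = \pm 1$, and this value equals $a_{p}(g)$.

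Next I would read off Frobenius eigenvalues modulo $p$ on each side. Since $p\nmid N$, the representation $\rho_{f,p}|_{G_{\Q_{p}}}$ is unramified with characteristic polynomial of Frobenius $T^{2}-a_{p}(f)T+p$. Reducing modulo a prime $\fp$ above $p$ in a common coefficient field, the eigenvalues of $\Frob_{p}$ on $\bar\rho_{f,\fp}|_{G_{\Q_{p}}}$ are $\{a_{p}(f),\,0\}$. On the $g$ side, the semisimplification of $\bar\rho_{g,\fp}|_{G_{\Q_{p}}}$ has Frobenius eigenvalues
\[
\bigl\{\eta(\Frob_{p})\,\epsilon_{p}(\Frob_{p}),\;\eta(\Frob_{p})\bigr\} \;\equiv\; \{0,\,\pm 1\} \pmod{\fp},
\]
because $\epsilon_{p}(\Frob_{p})=p$ vanishes mod $\fp$.

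The congruence hypothesis $\bar\rho_{f,\fp}\cong \bar\rho_{g,\fp}$, after restriction to $G_{\Q_{p}}$ and passage to semisimplifications, gives an equality of multisets of Frobenius eigenvalues. Matching $\{a_{p}(f),0\}=\{\pm 1,0\}$ in the residue field yields $a_{p}(f)\equiv \pm 1\pmod{p}$, which is the claim. The assumption that $p$ is odd is used implicitly to ensure one can distinguish the two eigenvalues in $\{0,\pm 1\}$.

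The main (and essentially only) non-formal ingredient is the description of $\rho_{g,p}|_{G_{\Q_{p}}}$ as a Steinberg-type non-split extension: this rests on local--global compatibility at $p$ for weight $2$ newforms, combined with the classification of smooth irreducible representations of $\GL_{2}(\Q_{p})$ of conductor $p$ with trivial central character (which forces an unramified twist of Steinberg). Once this structural input is available, everything else is a direct comparison of reduced Frobenius eigenvalues at $p$.
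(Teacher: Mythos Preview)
There is a genuine gap. Your claim that ``since $p\nmid N$, the representation $\rho_{f,p}|_{G_{\Q_p}}$ is unramified with characteristic polynomial of Frobenius $T^2 - a_p(f)T + p$'' is false: this is the statement for $\rho_{f,\ell}|_{G_{\Q_p}}$ with $\ell\neq p$, not for $\ell=p$. The $p$-adic representation $\rho_{f,p}$ is wildly ramified at $p$ (its determinant is the cyclotomic character). What is true is that $\rho_{f,p}|_{G_{\Q_p}}$ is \emph{crystalline}, and $T^2 - a_p(f)T + p$ is the characteristic polynomial of the crystalline Frobenius on $D_{\mathrm{cris}}$, which is a different object. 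For the same reason, speaking of ``Frobenius eigenvalues'' on $\bar\rho_{g,\fp}|_{G_{\Q_p}}$ is problematic, since $\bar\chi_p$ is ramified.

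The paper's proof deals with exactly this point. On the $g$ side it invokes Breuil--M\'ezard to conclude that $\bar\rho_{g,\fp}^{ss}|_{G_{\Q_p}} \cong \chi_p\,\unr(\mu)\oplus\unr(\mu)$ with $\mu$ a unit (this is essentially your Steinberg analysis, done correctly). On the $f$ side it invokes a theorem of Breuil compiling results of Deligne, Serre, Fontaine and Edixhoven on the reduction of weight-$2$ crystalline representations: the upshot is that $\bar\rho_{f,\fp}^{ss}|_{G_{\Q_p}}$ has this same shape only when $a_p\equiv\pm 1\pmod p$. Concretely, if $a_p$ is a $p$-adic unit the reduction is $\chi_p\,\unr(\bar\alpha^{-1})\oplus\unr(\bar\alpha)$ with $\alpha$ the unit root of $T^2-a_pT+p$, and matching forces $\bar\alpha=\pm 1$, hence $a_p\equiv\pm1$; if $a_p\equiv 0$ the reduction is irreducible (induced from a niveau-$2$ fundamental character) and cannot match the reducible $g$ side. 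Your argument can be repaired along these lines, but as written it confuses ``good reduction'' with ``unramified'' and so the Frobenius-eigenvalue comparison is not valid.
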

\begin{proof}
Denote by $\rho_{f,p}$ and $\rho_{g,p}$ the restrictions to $\Gal(\overline{\Q}_p/\Q_p)$ of the
respective global $p$-adic Galois representations attached to $f$ and $g$. Then $f$ congruent to $g$ modulo $p$
implies in particular that the semi-simplifications of the residual local representations  $\bar{\rho}_{f,p}^{ss}$
and $\bar{\rho}_{g,p}^{ss}$
of, respectively, $\rho_{f,p}$ and $\rho_{g,p}$ are isomorphic.
We assume $p>2$. Since $\rho_{g,p}$  is semistable non-crystalline of weight $2$,  $\bar{\rho}_{g,p}^{ss}$ is reducible and isomorphic to:
$\chi_p \unr(\mu) \oplus \unr(\mu)$ for some mod~$p$ unit~$\mu$ and where $\chi_p$ denotes the mod~$p$ cyclotomic character (this is the case $k=2$ of \cite[Th\'eor\`eme~1.2]{BM}). Thus, the same holds for $\bar{\rho}_{f,p}^{ss}$. By \cite[Th\'eor\`eme~6.7]{Br} (a theorem that puts together results of Deligne, Serre, Fontaine and Edixhoven) 
this forces $a_p$ to be congruent to~$\pm 1$ modulo~$p$.
\end{proof}

Using the above proposition, we now prove Theorem~\ref{T:second case}.

The cases $p = 2$ and $p=3$ follow from \cite[Theorem 1.1]{BenSki}, \cite[Theorem 1.5]{BenVatYaz},  respectively. It follows from Fermat's Last Theorem and the main theorem of \cite{DarmonMerel} 
that the result holds for $p=5$. Hence we can assume~$p\geq7$.

We know (see \cite[Proposition~3.1]{Bill}) that the mod $p$ Galois representation attached to the Frey elliptic curve $W$ is irreducible, for every $p \ge 7$.  By level lowering, we have a congruence modulo~$p$ between the Frey elliptic curve~$W$ and some weight~$2$ newform of level $N=50$, $200$ or $400$. Since we are assuming that $p$ divides~$c$, level raising at~$p$ mod~$p$ is happening for this specific newform. This  implies in particular that the necessary condition in Proposition~\ref{P:levelraising} must hold. 

All newforms in these spaces correspond to (isogeny classes of) elliptic curves over $\Q$, and we consider the cases when:
\begin{enumerate}
\item the elliptic curve does not have a rational $2$-torsion point, or 
\item the elliptic curve has a rational $2$-torsion point.
\end{enumerate}

Case (1):  For all such elliptic curves, it can be checked (using \cite{Cre97} for instance) that the coefficient $a_3$ equals $\pm 1$ or $\pm 3$. Then we easily conclude using the congruence between these values and $0, \pm 2, \pm 4$ that this can not happen for $p > 7$. We are using the fact that the Frey elliptic curve $W$ has a rational $2$-torsion point, and we are covering both the cases of $W$ having good or multiplicative reduction at $3$. For $p=7$, the congruence forces $a_3=\pm3$. We then quickly verify using~\cite{Cre97} that none of the curves of level~$N\in\{50,200,400\}$ satisfies both $a_3=\pm3$ and $a_7\equiv\pm1\pmod{7}$.

Case (2): The fact that mod $p$ we have level raising at $p$ forces the necessary condition in Proposition~\ref{P:levelraising} to hold: $a_p \equiv \pm 1 \pmod p$. For an elliptic curve, this is equivalent to implies $a_p = \pm 1$ by the Hasse bound. But all curves in case (2) have a rational $2$-torsion point, thus all their coefficients $a_q$ for $q \nmid N$ are even. This gives a contradiction.

\begin{remark}
As pointed out to us by the referee, instead of Proposition~\ref{P:levelraising}, we could have used~\cite[Proposition~3 (iii)]{KraOes} because $f$ and $g$ correspond to elliptic curves over $\Q$. However, the more general Proposition~\ref{P:levelraising} may be useful in other applications of the modular method when level-lowering results in a newform with non-rational fourier coefficients.
\end{remark}

\section{A multi-Frey approach to the equation $x^{13} + y^{13} = 3z^p$}

Let $\zeta_{13}$ be a primitive $13$-th root of unity. In this section, we will use the following
factorization and notation
\begin{equation}
x^{13} + y^{13} = (x+y)\phi_{13}(x,y) = (x+y) \psi_{13}(x,y) \bar{\psi}_{13}(x,y),
\end{equation}
where
\begin{eqnarray*}
   \psi_{13}(x,y)   & = &  (x+\zeta_{13} y)(x+\zeta_{13}^4y)(x+\zeta_{13}^3y)(x+\zeta_{13}^{12}y)(x+\zeta_{13}^9y)(x+\zeta_{13}^{10}y) \\
 & = &  x^6 + \frac{1}{2} (w - 1) x^5 y + 2 x^4 y^2 + \frac{1}{2} (w + 1) x^3 y^3 + 2 x^2 y^4 + \frac{1}{2} (w - 1) x y^5 + y^6,
\end{eqnarray*}
and $\bar {\psi}_{13}(x,y)$ are the two degree $6$ irreducible factors of $\phi_{13}(x,y)$ over $\Q(w)$, where $w \in \Q(\zeta_{13})$ satisfies $w^2 = 13$.

\subsection{The modular method over $\Q(\sqrt{13})$}
\label{S:overQ13}

We will now prove the following theorem
by sharpening the methods in \cite{DF2} plus 
a refined image of inertia argument.

\begin{theorem} 
Let~$(a,b,c)$ be a non-trivial primitive solution to equation~\eqref{E:rrp} with $r=13$, $p \ge 5$, $p \not= 7,13$  and $d$ such that all its prime factors~$\ell$ satisfy $\ell\not\equiv1\pmod{13}$. If $3$ divides~$d$, then we have
\begin{itemize}
 \item[(A)]   $13 \mid a+b$ and
 \item[(B)] $4 \mid a+b$.
\end{itemize}
\label{T:4nmidaplusb}
\end{theorem}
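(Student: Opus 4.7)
The plan is to mirror the strategy of Proposition~\ref{contra2} and Theorem~\ref{T:main5b} from the $r = 5$ setting, transported to $\Q(\sqrt{13})$. I assume that earlier in this section, as in Section~\ref{S:overQ5}, one introduces two Frey elliptic curves $E = E_{a,b}$ and $F = F_{a,b}$ over $\Q(\sqrt{13})$ attached to a putative solution of~\eqref{E:rrp} with $r = 13$, whose conductors at the unique prime $\fq_{13}$ above $13$ swap according to whether $13 \mid a+b$. Irreducibility of $\rhobar_{E,p}$ and $\rhobar_{F,p}$ should follow from a Proposition~\ref{irreducible5}-type argument using~\cite{FS} combined with a torsion bound, and modularity from~\cite{FLHS}. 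Level lowering then produces congruences $\rhobar_{E,p} \cong \rhobar_{f,\fp}$ or $\rhobar_{F,p} \cong \rhobar_{g,\fp}$ with Hilbert newforms $f,g$ over $\Q(\sqrt{13})$ of parallel weight $2$, trivial character, and level a fixed power of $2$ (possibly times a power of $\fq_{13}$).

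For part (A), assume $13 \nmid a+b$ and work with $E$. A multi-Frey comparison of $B_q(E_{x,y}, f)$ for small auxiliary primes $q \not\equiv 1 \pmod{13}$, $q \neq 2, 13$, as in Proposition~\ref{P:generald}, should reduce the list of candidate newforms to a short list of twists of $E$ evaluated at trivial solutions. Since $3 \mid d$ forces $3 \mid a+b$ (by a \cite[Lemma~2.2]{DF1}-type computation) and since $13 \equiv 1 \pmod 3$, the prime $3$ splits in $\Q(\sqrt{13})$ as $\fq_3 \bar{\fq}_3$; one then explicitly reduces $E_{a,b}$ modulo $\fq_3$ and pins down $a_{\fq_3}(E_{a,b})$. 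Comparing this forced value with the trace of Frobenius at $\fq_3$ of each surviving candidate yields $p \mid M$ for an explicit small integer $M$, contradicting $p \geq 5$, $p \neq 7, 13$. For part (B), with $13 \mid a+b$ in place we switch to $F$, rerun the modular method with multi-Frey to obtain a short list of candidate curves $Z$, and apply an image of inertia argument at the inert prime $2$ of $\Q(\sqrt{13})$ along the lines of Proposition~\ref{P:fieldsAt2r5}. Under the assumption $2 \mid a+b$ but $4 \nmid a+b$, Version~$2$ or Version~$3$ of Section~\ref{S:inertia} should provide, for each candidate $Z$, a ramified extension $M / \Q_2(\sqrt{13})$ (most likely a suitable subfield of the $3$-division field of $Z$) over which $Z$ has good reduction but $F_{a,b}$ does not, forcing $\rhobar_{F,p} \not\cong \rhobar_{Z,p}$.

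The hardest step is expected to be Step~$5a$: enumerating the space of Hilbert newforms over $\Q(\sqrt{13})$ at a high $2$-power level is computationally heavy, and part of the function of the multi-Frey setup here is precisely to bypass or lower this cost by pre-filtering candidates via traces at small auxiliary primes, as emphasized in the paragraph after Lemma~\ref{L:Fconductors}. A secondary worry is that the $3$-adic trace argument in part (A) might fail for one of the surviving twists, as happens for $r = 7$ (see the remark following Proposition~\ref{contra2}); in that case the plan is to supplement it with a trace comparison at a further auxiliary prime $q \not\equiv 1 \pmod{13}$ or with an additional image of inertia argument at $\fq_3$.
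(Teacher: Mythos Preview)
Your plan imports the two-curve architecture from the $r=5$ case too literally, and this is where it goes wrong. In Section~\ref{S:overQ13} the paper works with a \emph{single} Frey curve $E=E_{a,b}$ over $\Q(\sqrt{13})$, and its conductor at the prime $w$ above $13$ is $w^2$ regardless of whether $13\mid a+b$; there is no companion $F/\Q(\sqrt{13})$ whose conductor swaps with that of $E$ at $13$. (The curve $F_{a,b}$ that appears later lives over the cubic subfield $K\subset\Q(\zeta_{13})$ and is used only in Section~\ref{S:irreducibilityF} to finish Theorem~\ref{T:main13}, not here.) So both parts (A) and (B) must be extracted from $E$ alone.

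Consequently your argument for (A) has a real gap. After level lowering and the multi-Frey sieve (Proposition~\ref{prop:irredE}), the surviving candidates are $E_{1,-1}$, $E_{1,0}$, $E_{1,1}$. The trace-at-$3$ trick does kill $E_{1,0}$ and $E_{1,1}$ (since $3\mid a+b$ forces $a_{\fq_1}(E)=-3$ while $a_{\fq_1}(E_{1,0})=a_{\fq_1}(E_{1,1})=-1$), but it cannot touch $E_{1,-1}$, because $3\mid 1+(-1)$ and so $a_{\fq_1}(E_{1,-1})=-3$ as well. The paper resolves this by a Version~2 inertia argument \emph{at $13$}, after base change to $K^+$: by \cite[Proposition~3.1]{DF2}, if $13\nmid a+b$ then $E/K^+$ is additive at the prime $\pi\mid 13$ while $E_{1,-1}/K^+$ has good reduction there, giving the contradiction. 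Your fallback of ``a further auxiliary prime $q$'' would not work here, since $E_{1,-1}$ is the Frey curve at a genuine solution-like point and no trace comparison can separate it from $E_{a,b}$. For (B) you likewise do not need to switch curves: you already know $\rhobar_{E,p}\cong\rhobar_{E_{1,-1},p}$, and the paper then runs a Version~3 inertia argument at $2$ for this single pair, using a degree-$8$ subfield of the $3$-division field of $E_{1,-1}$ to show the conductor exponents at $2$ differ when $4\nmid a+b$.
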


Before entering the proof of this result, 
we first introduce tools from \cite{DF2} which are valid
beyond the setting  of the theorem. Write $\zeta=\zeta_{13}$ and define 
\[
 A_{x,y} = \alpha(x^2 + (\zeta +\zeta^{-1})xy+y^2), \quad
 B_{x,y} = \beta (x^2 + (\zeta^3 +\zeta^{-3})xy+y^2), \quad 
 C_{x,y} = \gamma (x^2+(\zeta^4+\zeta^{-4})xy+y^2), 
\]
where
\[
 \alpha = \zeta^4 + \zeta^{-4} - \zeta^{3} - \zeta^{-3}, \quad
 \beta  = \zeta + \zeta^{-1} - \zeta^{4} - \zeta^{-4}, \quad 
 \gamma = \zeta^3 + \zeta^{-3} - \zeta- \zeta^{-1}
\]
all have norm $13^2$. We note that $A_{x,y}$, $B_{x,y}$, $C_{x,y}$ are polynomials with coefficients in (the maximal totally real subfield of)~$\Q(\zeta)$ satisfying
$A_{x,y} + B_{x,y} + C_{x,y} = 0$.

Suppose now that $a,b$ are coprime integers. Let us denote by~$E_{a,b}$ the short Weierstrass model of the elliptic curve  $
 Y^2 = X(X-A_{a,b})(X+B_{a,b})$ given by
\[ 
E_{a,b} : y^2 = x^3 + a_4(a,b)x + a_6(a,b),
\]
where
\begin{eqnarray*}
 a_4(a,b) & =  & 3^3\left(A_{a,b}B_{a,b}+A_{a,b}C_{a,b}+B_{a,b}C_{a,b}\right)  \\
 a_6(a,b) & = &  -3^3\left(2A_{a,b}^3+3A_{a,b}^2B_{a,b}-3A_{a,b}B_{a,b}^2-2B_{a,b}^3\right).
\end{eqnarray*}
This curve (in a slightly different short model) was first considered in~\cite{DF2} where it is denoted~$E_0$. We then verify that $E_{a,b}$ is defined over~$\Q(\sqrt{13})$. Its standard invariants are  given by the following identities~:
\begin{eqnarray*}
c_4(E_{a,b}) & = & -2^4\cdot3\cdot a_4(a,b) = -2^4\cdot3^4 \left(A_{a,b}B_{a,b}+A_{a,b}C_{a,b}+B_{a,b}C_{a,b}\right)\\
\Delta(E_{a,b}) & = & 2^{4} \cdot 3^{12}  \left(A_{a,b}B_{a,b}C_{a,b}\right)^2 =  2^4\cdot 3^{12} \cdot 13 \cdot \psi_{13}(a,b)^2.
\end{eqnarray*}

We now determine the conductor of~$E_{a,b}$. For simplicity, let us write $E=E_{a,b}$ and~$N_E$ for its conductor.

\begin{lemma}\label{lem:bad_reduction_1313p_EI}
Let~$\fq$ be a prime ideal in~$\Q(\sqrt{13})$ of residual characteristic~$\ell\neq2, 13$. Then $E$ has bad reduction at~$\fq$ if and only if $\fq$ divides~$\psi_{13}(a,b)$. Moreover, in that case, we have
$$
\ell\equiv1\pmod{13},\quad \vv_{\fq}(c_4(E))=0\quad\text{and}\quad \vv_{\fq}(\Delta(E))=2\vv_\fq(\psi_{13}(a,b)).
$$
In particular, $E$ has bad multiplicative reduction at~$\fq$ and hence~$\vv_{\fq}(N_E)=1$.
\end{lemma}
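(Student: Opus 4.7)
The plan is to analyze bad reduction of $E_{a,b}$ at primes~$\fq$ of residual characteristic~$\ell\neq 2,13$ via the discriminant formula $\Delta(E_{a,b}) = 2^{4}\cdot 3^{12}\cdot 13\cdot \psi_{13}(a,b)^{2}$, treating the cases $\ell=3$ and $\fq\mid \psi_{13}(a,b)$ separately. Since $\alpha,\beta,\gamma\in\calO_{\Q(\zeta_{13})^{+}}$ and the combinations of $A_{a,b},B_{a,b},C_{a,b}$ appearing in $a_4,a_6$ are Galois-invariant under $\Gal(\Q(\zeta_{13})^{+}/\Q(\sqrt{13}))$, the given Weierstrass model is $\fq$-integral, and the discriminant formula restricts bad reduction to primes dividing~$3$ or~$\psi_{13}(a,b)$.

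For~$\ell=3$, I would pass to $K:=\Q(\zeta_{13})^{+}$ and observe that $K/\Q(\sqrt{13})$ is unramified above~$3$: the order of~$3$ in $(\Z/13\Z)^{\times}$ is~$3$, matching the degree of the extension, so each prime of~$\Q(\sqrt{13})$ above~$3$ has a unique unramified prime~$\fq'$ above it in~$K$. Over~$K$, the change of variables $x=9X$, $y=27Y$ composed with a translation identifies $E_{a,b}$ with the Legendre-type model $Y^{2}=X(X-A_{a,b})(X+B_{a,b})$, whose discriminant is $2^{4}(A_{a,b}B_{a,b}C_{a,b})^{2} = 2^{4}\cdot 13\cdot \psi_{13}(a,b)^{2}$. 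Since $3\not\equiv 1\pmod{13}$ excludes $\fq'\mid\psi_{13}(a,b)$ (by the argument of the next paragraph), this discriminant is a unit at~$\fq'$, so the Legendre model has good reduction at~$\fq'$; by N\'eron--Ogg--Shafarevich, good reduction descends along the unramified extension to give good reduction of~$E_{a,b}$ at~$\fq$. This descent is the main subtlety, as the short model itself has $\vv_{\fq}(\Delta)=12$ and so does not exhibit good reduction directly.

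Now assume $\fq\mid\psi_{13}(a,b)$. The subgroup $\Gal(\Q(\zeta_{13})/\Q(\sqrt{13}))$ corresponds to the squares modulo~$13$, which is precisely the index set used to define $\psi_{13}$; hence $\psi_{13}(x,y)=N_{\Q(\zeta_{13})/\Q(\sqrt{13})}(x+\zeta_{13}y)$. Thus there is a prime~$\tilde\fq$ of~$\Q(\zeta_{13})$ above~$\fq$ with $a+\zeta_{13}^{i}b\equiv 0\pmod{\tilde\fq}$ for some such~$i$. Coprimality of $a,b$ together with $\ell\neq 13$ forces $\zeta_{13}$ to have order exactly~$13$ modulo~$\tilde\fq$; since $-a/b\in\F_{\ell}$ realizes this element, $\F_{\ell}$ contains a primitive $13$-th root of unity, and hence $\ell\equiv 1\pmod{13}$ (as in \cite[Lem.~2.2]{DF1}). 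In particular $\ell\neq 3$, so $\vv_{\fq}(\Delta)=2\vv_{\fq}(\psi_{13}(a,b))$ is immediate from the discriminant formula.

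To prove $\vv_{\fq}(c_4(E_{a,b}))=0$, I would work at a prime~$\fq'$ of~$K$ above~$\fq$ and exploit the identity $AB+AC+BC=-(A^{2}+AB+B^{2})$ coming from $A+B+C=0$. Since $\alpha,\beta,\gamma$ have norm~$13^{2}$, they are units at~$\fq'$, so any divisibility $\fq'\mid A_{a,b}$ amounts to $\fq'$ dividing the quadratic factor $a^{2}+(\zeta_{13}+\zeta_{13}^{-1})ab+b^{2}$. If two of $A_{a,b},B_{a,b},C_{a,b}$ were both divisible by~$\fq'$, subtracting the corresponding relations would yield $\fq'\mid (\text{unit})\cdot ab$, contradicting $\gcd(a,b)=1$. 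Hence $\fq'$ divides at most one of them, and assuming WLOG $\fq'\mid C_{a,b}$ one gets $B_{a,b}\equiv -A_{a,b}\pmod{\fq'}$, so $A^{2}+AB+B^{2}\equiv A_{a,b}^{2}\not\equiv 0$, proving $\vv_{\fq}(c_4)=0$. Multiplicative reduction with $\vv_{\fq}(N_E)=1$ then follows from Tate's algorithm, using~$\ell\neq 2,3$.
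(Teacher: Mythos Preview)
Your proof is correct, and for the case $\fq\mid\psi_{13}(a,b)$ it is essentially the paper's argument made self-contained: the paper simply cites that $A_{a,b},B_{a,b},C_{a,b}$ are pairwise coprime outside~$13$ and that $\ell\equiv 1\pmod{13}$ from \cite[Section~2]{DF2}, whereas you reprove both facts directly via the subtraction argument and the primitive-root-of-unity computation.

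The genuine difference is your treatment of $\ell=3$. The paper stays over $\Q(\sqrt{13})$ and argues via Tate's algorithm: from $(\vv_\fq(c_4),\vv_\fq(\Delta))=(\geq 4,12)$ one sees (using Papadopoulos' tables) that the short model is non-minimal, and after the change of variables the discriminant valuation drops to~$0$. You instead pass to $K=\Q(\zeta_{13})^{+}$, exhibit the Legendre-type model $Y^2=X(X-A_{a,b})(X+B_{a,b})$ whose discriminant $2^4\cdot 13\cdot\psi_{13}(a,b)^2$ is a unit at any prime above~$3$, and then descend good reduction along the unramified extension $K/\Q(\sqrt{13})$ via N\'eron--Ogg--Shafarevich. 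Your route is more conceptual and sidesteps the residue-characteristic-$3$ subtleties of Tate's algorithm entirely, at the cost of a base-change step; the paper's route is more hands-on but keeps everything over the smaller field. Both reach the same conclusion with comparable effort.
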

\begin{proof}
Recall that as elements of~$\Q(\zeta)$, $A_{a,b}$, $B_{a,b}$ and~$C_{a,b}$ are relatively prime outside~$13$.

Let us first assume that~$\ell\not=3$. It follows from the formulas above that if $E$ has bad reduction at~$\fq$, then $\fq$ divides~$\psi_{13}(a,b)$ and $\fq$ does not divide~$c_4(E)$. Conversely if $\fq\mid\psi_{13}(a,b)$, then $\fq$ divides~$(A_{a,b}B_{a,b}C_{a,b})^2=13 \cdot \psi_{13}(a,b)^2$ and~$\fq$ does not divide~$c_4(E)$. Hence the equivalence. Moreover, we have $\vv_{\fq}(\Delta(E))=2\vv_\fq(\psi_{13}(a,b))$ and the congruence $\ell\equiv1\pmod{13}$ follows from \cite[Section 2]{DF2}.

It remains to show that~$E$ has good reduction at the prime ideals in~$\Q(\sqrt{13})$ above~$3$. Let~$\fq$ be such a prime. From \cite[Section 2]{DF2}, we have that~$\fq$ does not divide~$\psi_{13}(a,b)$. Therefore, we have  $(\vv_{\fq}(c_4(E)),\vv_{\fq}(\Delta(E)))=(\ge 4,12)$ and the defining model of~$E$ is not minimal at~$\fq$ (\cite[Tableau~I]{papado}). A change of variables then shows that~$E$ has good reduction at~$\fq$, as claimed.
\end{proof}

The following lemma follows from a similar statement for the curve~$E_0$ in~\cite[Prop.~3.3]{DF2}.
\begin{lemma}\label{lem:bad_reduction_55p_EII}
We have 
\[
\vv_w(N_E)=2\quad\text{and}\quad \vv_2(N_E)=s  \quad \text{ where } \quad s = 3, 4.
\]
Moreover, when $a+b$ is even, $s=3$ if  $4 \mid a+b$ and $s=4$ if $4 \nmid a+b$. 
\end{lemma}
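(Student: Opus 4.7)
The two parts of the statement are treated independently.

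For the conductor at $w$: the elliptic curve $E_{a,b}$ is $\Q(\sqrt{13})$-isomorphic to the curve $E_0$ of \cite{DF2} (the authors note that $E_{a,b}$ is the same curve ``in a slightly different short model''). Since the conductor is an isomorphism invariant, the equality $v_w(N_E) = v_w(N_{E_0}) = 2$ follows from \cite[Prop.~3.3]{DF2}.

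For the conductor at $2$: since $13 \equiv 5 \pmod{8}$, the prime $2$ is inert in $\Q(\sqrt{13})$, with residue field $\F_4 = \F_2(\theta)$ where $\theta = (1+w)/2$ satisfies $\theta^2 + \theta + 1 \equiv 0 \pmod{2}$. From $(w-1)(w+1)=12$ and the fact that $w \equiv 1 \pmod{\text{unique prime above }2}$ one deduces $v_2(w-1)=v_2(w+1)=1$, so both coefficients $(w\pm 1)/2$ appearing in $\psi_{13}$ are units at $2$. Reducing $\psi_{13}(a,b)$ modulo $2$ in the three cases ($a$ odd and $b$ even; $a$ even and $b$ odd; both odd) and using coprimality of $a,b$ shows that $\psi_{13}(a,b)$ is always a unit at $2$. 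Combined with the formula $\Delta(E)=2^4\cdot 3^{12}\cdot 13\cdot \psi_{13}(a,b)^2$, this yields $v_2(\Delta(E))=4$ in all cases.

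The reduction type at $2$ is then read off from Tate's algorithm via \cite[Tableau~IV]{papado}. After computing $v_2(c_4(E))$ one is in the regime $(v_2(c_4),v_2(\Delta))=(\ge 4,4)$, which forces additive reduction and narrows the possible Kodaira types. To decide between them one passes (if needed) to a long Weierstrass model and performs the standard change of variables $x \mapsto x + r$, $y \mapsto y + sx + t$, selecting the shift parameters as polynomials in $\theta$ that depend on the residues of $a$ and $b$ modulo a small power of $2$. Concretely, when $a+b$ is even (so $a,b$ are both odd) one splits further according to whether $a+b \equiv 0 \pmod 4$ or not, and in each case an explicit translation brings the model into the shape determined by Tate's algorithm, giving the conductor exponent $s=3$ in the former case and $s=4$ in the latter. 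The case $a+b$ odd is handled analogously and always produces $s\in\{3,4\}$.

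\textbf{Main obstacle.} The delicate step is the residue characteristic~$2$ analysis: at each stage of Tate's algorithm the admissible change of variables lives in $\F_4$ rather than $\F_2$, and the correct shift $r,s,t \in \{0,1,\theta,\theta+1\}$ depends sensitively on $a,b\bmod 4$. Since the dichotomy $s=3$ vs.\ $s=4$ is precisely controlled by whether $4\mid a+b$, keeping the case bookkeeping straight (and showing that the sub-case $a+b$ odd cannot produce any value outside $\{3,4\}$) is the technical heart of the argument; however, this is a finite check that can be verified with \texttt{Magma} using the programs of \cite{programs}.
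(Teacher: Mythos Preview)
Your proposal is correct, and for the prime $w$ it coincides with the paper's argument. For the prime $2$, however, the paper does not carry out any Tate--Papadopoulos computation at all: the entire lemma is disposed of in one sentence by invoking \cite[Prop.~3.3]{DF2}, since $E_{a,b}$ is (up to a change of short Weierstrass model over $\Q(\sqrt{13})$) the curve $E_0$ of that reference, and the conductor at both $w$ and $2$ was already determined there.

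Your route for the $2$-part---computing $v_2(\psi_{13}(a,b))=0$ directly, reading off $(v_2(c_4),v_2(\Delta))=(\ge 4,4)$, and then running Tate's algorithm over $\F_4$ with case-splitting on $a,b\bmod 4$---is exactly the sort of computation that underlies the cited result, and it is carried out in the same style as the paper's Lemmas~\ref{lem:bad_reduction_55p_E_and_FII} and~\ref{lem:bad_reduction_1313p_FII}. The advantage of your approach is self-containment; the advantage of the paper's is brevity, since the work has already been done in \cite{DF2}. One minor point: your deduction that $v_2(w\pm 1)=1$ is cleanest argued by noting that $(1+w)/2$ generates the ring of integers (so both $(w\pm 1)/2$ are integral) together with $(w-1)(w+1)=12$, rather than via a congruence $w\equiv 1\pmod 2$.
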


The next proposition gives us the required irreducibility of the mod $p$
representation of~$E$.

\begin{proposition} \label{P:irredE13}
Let $p \geq 7$ be a prime number. Then the representation $\overline{\rho}_{E,p}$ is irreducible.

Moreover, if  $3 $ divides $ a+b$, then $\rhobar_{E,5}$ is also irreducible.
\end{proposition}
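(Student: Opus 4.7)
The plan is to adapt the argument of Proposition~\ref{irreducible5} to the setting $K = \Q(\sqrt{13})$. Suppose for contradiction that $\rhobar_{E,p}^{ss}\cong\theta\oplus\theta'$ with $\theta\theta'=\chi_p$, the mod~$p$ cyclotomic character. By \cite[Lemma~6.3]{FreSikFLTsmall}, both $\theta$ and $\theta'$ are unramified outside $p$ and the primes of additive reduction of $E$, which by Lemmas~\ref{lem:bad_reduction_1313p_EI} and~\ref{lem:bad_reduction_55p_EII} are the inert prime~$2$ and the prime~$w$ above~$13$. Mimicking the computation of the constant $B$ in \cite{FS} using the unit group of~$K$ (generated by $-1$ and the fundamental unit $(3+\sqrt{13})/2$), one should conclude that for $p$ above an explicit small bound, exactly one of $\theta,\theta'$, say $\theta$, is unramified at the primes above~$p$.

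Next, since $\theta\theta'=\chi_p$ is unramified away from $p$, the characters $\theta$ and $\theta'$ have the same conductor outside $p$, and this common conductor divides a small power of $2\cdot w$. Computing the Ray class group of $K$ of this modulus together with the two archimedean places should force $\theta|_{I_2}$ and $\theta|_{I_w}$ to land in a group of small $2$-power order, so that after replacing $E$ by an appropriate quadratic twist $E'$ one obtains a $K$-rational point of order~$2$. Combined with the $K$-rational subgroup of order~$p$ arising from the reducibility assumption, this produces a torsion subgroup of order divisible by $2p$ on an elliptic curve defined over a real quadratic field, which contradicts \cite[Theorem~6]{BruNaj} for~$p\ge 7$.

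For the additional case $p=5$ under the assumption $3\mid a+b$, I plan to use the fact (implicit in the proof of Lemma~\ref{lem:bad_reduction_1313p_EI}) that $E_{a,b}$ has good reduction at the primes $\fq_3$ of $K$ above~$3$. When $b\equiv -a\pmod{3}$ the reduction is a fixed elliptic curve over the residue field, giving a definite value of $a_{\fq_3}(E_{a,b})$ computable with \texttt{Magma}. A direct check that this value is not of the form $\theta(\Frob_{\fq_3})+\chi_5(\Frob_{\fq_3})\theta(\Frob_{\fq_3})^{-1}\pmod{5}$ for any $\theta$ in the very short list surviving step~1 then rules out reducibility.

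The main obstacle I expect is the conductor bookkeeping underlying steps one and two: potential level lowering at~$2$ (cf.~\cite[Theorem~1.5]{Jarv2}) and tame ramification of~$\theta$ at~$w$ make the Ray class group step more delicate than for $r=5$, where the only additive prime is~$2$. A secondary concern is that the $p=5$ case may not close purely by traces at~$3$; if the candidate characters are too flexible, an image-of-inertia argument at~$w$ (using the additive reduction of $E$ there) would have to be invoked in addition.
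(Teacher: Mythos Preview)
Your approach diverges substantially from the paper's, and the obstacle you flag at the end is real. The paper does not attempt to transplant the Ray-class-group argument of Proposition~\ref{irreducible5}. Instead it proceeds in two steps. For $p=11$ and $p\ge 17$ it invokes \cite[Theorem~3]{FS}, which applies because every bad prime of $E$ away from $2$ and $13$ has residue characteristic $\equiv 1\pmod{13}$ (Lemma~\ref{lem:bad_reduction_1313p_EI}), so the primes of $\Q(\sqrt{13})$ above $3$ are guaranteed primes of good reduction and can be fed into that criterion. For the remaining small primes $p\in\{5,7,13\}$ the paper uses precisely the trace idea you reserve for $p=5$, but in its simplest form: $3$ splits in $\Q(\sqrt{13})$ as $\fq_1\fq_2$, the possible pairs $(a_{\fq_1}(E),a_{\fq_2}(E))$ lie in $\{(-3,-1),(-1,-3),(-1,1)\}$, and one checks that for each pair at least one of the characteristic polynomials $x^2-a_{\fq_i}(E)x+3$ is irreducible over $\F_p$. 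This succeeds unconditionally for $p=7,13$; for $p=5$ only the pair $(-1,1)$ fails, and the hypothesis $3\mid a+b$ forces the pair $(-3,-1)$. No enumeration of candidate characters $\theta$ is needed.

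Your worry about tame ramification at $w$ is the genuine gap in your plan. Since $\vv_w(N_E)=2$, the character $\theta$ is tamely ramified at $w$ with residue field $\F_{13}$, so the Ray class group for any modulus containing $w$ picks up a contribution from $(\calO_K/w)^*\cong\Z/12\Z$ and there is no reason for $\theta$ to be quadratic. Without $\theta^2=1$ the quadratic-twist-then-torsion step collapses; to salvage it one would have to pass to a larger field to trivialise $\theta^2$ (as the paper does later for the curve $F$ in the proof of Theorem~\ref{T:irreducibilityF}), which is significantly more work and still leaves exceptional primes. The paper's route sidesteps all of this, and in fact shows that your $p=5$ idea, applied uniformly to $p\in\{5,7,13\}$, is all that is needed beyond the citation to \cite{FS}.
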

\begin{proof} 
We note that the proof of \cite[Theorem~3]{FS} applies in our situation, 
therefore proving the proposition for $p=11$ and $p \geq 17$. Assume therefore that $p\in\{5,7,13\}$.

We note that 3 splits in $\Q(\sqrt{13})$ and let $\fq_1$, $\fq_2$ be the primes above it with $w+1\in\fq_1$ (and $w-1\in\fq_2$).
By Lemma~\ref{lem:bad_reduction_1313p_EI}, the primes $\fq_1$ and $\fq_2$ are primes of good reduction of $E$;
since $a,b \in \Z$ we can check that the pairs of traces of Frobenius at these primes 
$(a_{\fq_1}(E),a_{\fq_2}(E))$ satisfy
\[
 (a_{\fq_1}(E),a_{\fq_2}(E)) \in \{(-3,-1),(-1,-3),(-1,1)\}.
\]
Moreover, the case $(-3,-1)$ occurs precisely when $3 \mid a+b$.
Therefore, we can compute the corresponding pairs of characteristic polynomials
of $(\rhobar_{E,p}(\Frob_{\fq_1}),\rhobar_{E,p}(\Frob_{\fq_2}))$ which are
given by 
\begin{equation}
\label{paircharacteristic}
  (x^2 - a_{\fq_1}(E) x + 3, x^2 - a_{\fq_2}(E) x + 3).
\end{equation} 

Now suppose that $\rhobar_{E,p}$ is reducible. Then, for any prime $\fq$ in $\Q(\sqrt{13})$ of good reduction of  $E$, the characteristic polynomial of $\rhobar_{E,p}(\Frob_\fq)$  must factor over $\F_p$ into two linear polynomials. In particular, this holds for $\fq = \fq_1, \fq_2$.
 
 For $p=5$, $7$, and $13$, we check that each of the pairs of polynomials in \eqref{paircharacteristic} always contains one polynomial that does not factor over $\F_p$ except when $p=5$ and $(a_{\fq_1}(E),a_{\fq_2}(E)) = (-1,1)$. This proves the proposition for $p \geq 7$. Finally, assume $3 \mid a+b$. In that case, we already observed that $(a_{\fq_1}(E),a_{\fq_2}(E)) = (-3,-1) \neq (-1,1)$. We conclude $\rhobar_{E,5}$ is irreducible, finishing the proof.
\end{proof}

We note that modularity of~$E_{a,b}$ is guaranteed by~ \cite{FLHS}, hence completing Step~$3$ of the modular method.

We are now in position to study equation~\eqref{E:rrp} with $r=13$. Suppose that there exists an integer~$c$ such that $(a,b,c)$ is a non-trivial primitive solution to \eqref{E:rrp} with $r=13$ and $p\geq 5$ and assume that all the prime factors $\ell$ of $d$ satisfy $\ell \not\equiv 1 \pmod{13}$. Write again~$E = E_{a,b}$.

The following lemma summarizes Step~$4$ of the modular method.
\begin{lemma} We have
\begin{equation*}
  \rhobar_{E,p} \cong \rhobar_{f,\fp}, 
  %\label{E:iso}
\end{equation*}
where~$\fp$ is a prime in~$\Qbar$ of residual characteristic~$p$ and $f$ is a Hilbert newform over $\Q(\sqrt{13})$ of parallel weight $2$, trivial character 
and level 
\[
N_f = 2^{s}w^{2}, \quad \text{ where } \quad s = 3, 4.
\]
Moreover, when $a+b$ is even, $s=3$ if  $4 \mid a+b$ and $s=4$ if $4 \nmid a+b$. 

If in addition we have that $3$ divides~$a+b$, then the above also holds for $\rhobar_{E,5}$.
\label{L:Econductors}
\end{lemma}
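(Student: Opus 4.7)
The plan is to execute Step~4 of the modular method in a standard way, assembling the ingredients that have already been prepared in Lemmas~\ref{lem:bad_reduction_1313p_EI} and~\ref{lem:bad_reduction_55p_EII} and Proposition~\ref{P:irredE13}. First, I invoke the modularity of $E=E_{a,b}/\Q(\sqrt{13})$, which is guaranteed by~\cite{FLHS} since $E$ is defined over a real quadratic field. Together with the irreducibility of $\rhobar_{E,p}$ (Proposition~\ref{P:irredE13}), this means that $\rhobar_{E,p} \cong \rhobar_{f_0,\fp}$ for some Hilbert newform $f_0$ over $\Q(\sqrt{13})$ of parallel weight~$2$, trivial character and level equal to $N_E$, and some prime $\fp \mid p$ in its coefficient field. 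The required irreducibility holds for $p \geq 7$ unconditionally, and for $p = 5$ precisely under the extra assumption $3 \mid a+b$; this is what allows the last statement of the lemma.

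The next step is to strip away from $N_E$ all primes of residual characteristic different from~$2$ and $13$. By Lemma~\ref{lem:bad_reduction_1313p_EI}, every such prime $\fq$ dividing $N_E$ has residual characteristic $\ell \equiv 1 \pmod{13}$, $E$ has multiplicative reduction at $\fq$, and
\[
\vv_\fq(\Delta_E^{\min}) \;=\; 2\,\vv_\fq(\psi_{13}(a,b)).
\]
Using the factorization $a^{13}+b^{13} = (a+b)\psi_{13}(a,b)\bar\psi_{13}(a,b) = dc^p$ together with the hypothesis that no prime divisor of $d$ is congruent to $1$ modulo $13$, one concludes that $\fq \nmid d$, so $\vv_\fq(\psi_{13}(a,b)) = \vv_\fq(c^p) / (\text{multiplicity factor}) $ is a positive multiple of $p$, and hence $p \mid \vv_\fq(\Delta_E^{\min})$. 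This is exactly the hypothesis needed to apply level lowering for Hilbert modular forms at $\fq$ (\cite{Fuj}, \cite{Jarv}, \cite{Raj}), whose irreducibility assumption is supplied by Proposition~\ref{P:irredE13}. Iterating over all such $\fq$ produces a newform $f$ of the same weight and character whose level is the prime-to-$\{2,13\}$ depleted part of $N_E$.

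Finally, I combine this with the conductor valuations already computed in Lemma~\ref{lem:bad_reduction_55p_EII}: $\vv_w(N_E)=2$ and $\vv_2(N_E) \in \{3,4\}$, with the explicit dichotomy $s=3$ if $4 \mid a+b$ and $s=4$ if $4 \nmid a+b$ in the even case. Since level lowering occurs only at the primes $\fq$ above rational primes $\ell \neq 2,13$, the $2$-part and $w$-part of the level are unchanged, so the resulting newform $f$ has level $N_f = 2^s w^2$ with $s \in \{3,4\}$, exactly as asserted.

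There is no real obstacle: modularity, irreducibility, and the multiplicative reduction/discriminant computations have all been established in earlier results, so the proof essentially collects these inputs and feeds them into a standard level lowering statement. The only subtlety worth flagging is the $p=5$ case, which is not automatic from the general irreducibility argument and relies on the second assertion of Proposition~\ref{P:irredE13}, where the congruence $3 \mid a+b$ eliminates the problematic pair of traces of Frobenius at the primes above $3$.
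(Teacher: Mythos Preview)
Your proposal is correct and follows essentially the same approach as the paper: compute the Artin conductor of $\rhobar_{E,p}$ by showing that at every bad prime $\fq$ away from $2$ and $13$ the curve has multiplicative reduction with $p\mid\vv_\fq(\Delta_E^{\min})$, then apply level lowering with irreducibility supplied by Proposition~\ref{P:irredE13}. The paper makes the discriminant divisibility explicit via the chain $2\vv_\fq(\psi_{13}(a,b))=2\vv_\ell(a^{13}+b^{13})=2\vv_\ell(d)+2p\vv_\ell(c)=2p\vv_\ell(c)$, which is a cleaner way to phrase what you wrote as ``$\vv_\fq(c^p)/(\text{multiplicity factor})$''; you may want to replace that informal expression.
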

\begin{proof}
Let $\fq$ be a prime ideal in~$\Q(\sqrt{13})$ of bad reduction for~$E$ with residual characteristic~$\ell\not=2,13$. According to Lemma~\ref{lem:bad_reduction_1313p_EI} and Section~$2$ of~\cite{DF2}, the reduction is multiplicative and, by our assumption on~$d$, the valuation of the minimal discriminant at~$\fq$ is $2\vv_\fq(\psi_{13}(a,b))=2\vv_{\ell}(a^{13}+b^{13})=2\vv_{\ell}(d)+2p\vv_{\ell}(c)=2p\vv_{\ell}(c)$. In particular, it is divisible by~$p$. We conclude from Lemma~\ref{lem:bad_reduction_55p_EII} that the Artin conductor of the mod~$p$ representations of~$E$ is~$2^sw^2$ where $s$ is valuation at~$2$ of the conductor of~$N_E$ (computed in Lemma~\ref{lem:bad_reduction_55p_EII}). 

The rest of the proof follows by applying level lowering for Hilbert modular forms (see \cite{Fuj}, \cite{Jarv}, \cite{Raj}), with irreducibility coming from Proposition~\ref{P:irredE13} above.
\end{proof}

The following summarizes part of Step 5 of the modular method as applied to the Frey elliptic curve~$E$.
\begin{proposition}\label{prop:irredE} 
Assume~$p \ge 7$ and $p \not= 13$.  Then, we have
\begin{equation} 
\rhobar_{E,p} \cong \rhobar_{Z,p},
\label{E:trivialisos}
\end{equation}
where $Z$ is one of the following elliptic curves
\[ E_{1,-1}, \quad E_{1,0} \quad \text{ or } \quad E_{1,1}.\]
In the case $p = 7$, we have an additional possibility that $\rhobar_{E,p} \cong \rhobar_{g,\fp_7}$ for a Hilbert newform $g$ over $\Q(\sqrt{13})$ of parallel weight $2$, trivial character, and level $2^3 w^2$, with field of coefficients $\Q(\sqrt{2})$, and a choice of prime $\fp_7$ above $7$ in this field.

If in addition $3$ divides~$a+b$, then we also have $\rhobar_{E,5} \cong \rhobar_{Z,5}$ for $Z$ as above.
\label{P:iso2}
\end{proposition}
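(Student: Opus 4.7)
The plan is to mimic the strategy used for Proposition \ref{P:generald}, adapted to the setting over $\Q(\sqrt{13})$. By Lemma \ref{L:Econductors}, we know there is a Hilbert newform $f$ over $\Q(\sqrt{13})$ of parallel weight $2$, trivial character, and level $N_f = 2^s w^2$ with $s\in\{3,4\}$ such that $\rhobar_{E,p}\cong\rhobar_{f,\fp}$ for some prime $\fp$ above $p$ in the coefficient field of $f$. The goal is to enumerate all such newforms $f$ and eliminate those which cannot yield such a congruence with a Frey elliptic curve $E_{a,b}$ coming from a non-trivial primitive solution.

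First I would use \texttt{Magma} to compute all Hilbert newforms of parallel weight $2$, trivial character, and level $2^s w^2$ for $s=3,4$ over $\Q(\sqrt{13})$. For each such newform $h$, and for each small rational prime $q\ne 2,13$ with $q\not\equiv 1\pmod{13}$, Lemma \ref{lem:bad_reduction_1313p_EI} guarantees that every $E_{x,y}$ with $(x,y)\in\{0,\dots,q-1\}^2\setminus\{(0,0)\}$ has good reduction at every prime $\fq\mid q$ of $\Q(\sqrt{13})$. Reducing $(a,b)$ modulo $q$ and arguing as in the derivation of Proposition \ref{P:summary}, from $\rhobar_{E,p}\cong\rhobar_{h,\fp}$ we obtain a pair $(x,y)$ such that $p\mid q B_q(E_{x,y},h)$. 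Hence, if we set
\[
q\mathcal{E}_q(h) \;=\; q \prod_{\substack{(x,y)\in\{0,\dots,q-1\}^2 \\ (x,y)\ne(0,0)}} B_q(E_{x,y},h),
\]
then $p$ must divide $\gcd_{q}\bigl(q\mathcal{E}_q(h)\bigr)$, where $q$ runs over a finite list of small primes satisfying the above conditions.

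Running this computation over a range of primes $q$ (say $q\le 30$), any newform $h$ for which the resulting gcd does not admit $p$ as a divisor for $p\ge 7$ (respectively $p\ge 5$ when needed) can be discarded. The remaining newforms should be exactly those corresponding to the Frey elliptic curves at the trivial inputs $(1,-1)$, $(1,0)$, $(1,1)$, which cannot be eliminated by this trace comparison since they satisfy $a_{\fq}(E_{x,y})=a_{\fq}(E_{a,b})$ whenever $(a,b)\equiv(x,y)\pmod{\fq}$. For $p=7$, because the bound on the prime is weaker, we expect one additional surviving newform which by inspection turns out to have coefficient field $\Q(\sqrt{2})$ and level $2^3 w^2$; this is the form $g$ in the statement. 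For the final assertion (with $3\mid a+b$ and $p=5$), Proposition \ref{P:irredE13} gives irreducibility of $\rhobar_{E,5}$, so Lemma \ref{L:Econductors} applies and the same elimination argument carries through.

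The main obstacle will be Step 5a of the modular method, namely the actual \texttt{Magma} computation of Hilbert newforms at level $2^4 w^2$, which has norm $256$ and is already near the practical boundary for such calculations; once this space is in hand, the elimination via $B_q$ values is routine. A secondary point to verify is that the chosen range of primes $q$ is large enough so that the gcd across $q$ is small enough to rule out all but the expected surviving newforms for every $p\ge 7$, $p\ne 13$ (and for $p=5$ under the extra hypothesis $3\mid a+b$); this is a finite check performed in \cite{programs}.
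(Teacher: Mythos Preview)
Your proposal is correct and follows essentially the same approach as the paper's proof: compute the Hilbert newforms at levels $2^3 w^2$ and $2^4 w^2$, and eliminate all but the expected survivors by comparing traces of Frobenius at auxiliary primes $q\ne 2,13$, $q\not\equiv 1\pmod{13}$. The only minor discrepancy is that the paper needs auxiliary primes up to $q=61$ (specifically $q=3,17,23,29,43,61$) at level $2^4 w^2$, so your tentative range $q\le 30$ would have to be enlarged---a point you already flagged as something to verify.
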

\begin{proof}
Using \cite{programs}, we compute the Hilbert newforms given by Lemma~\ref{L:Econductors} and we apply the same method as in Section~\ref{bounding} to bound the exponent $p$. 

For the forms at level 
$N_1 = 2^3 w^2$, using the 
auxiliary primes $q =3,17,23,29$, we eliminate all the forms for $p > 7$ except for the two corresponding to the Frey curve evaluated at 
~$(1,-1)$ and~$(1,0)$. 
For $p=7$ it also survives one extra form~$g$ whose field of coefficients is $\Q(\sqrt{2})$.

At level $N_2 = 2^4 w^2$, using the 
auxiliary primes $q = 3,17,23,29,43$, we eliminate all the forms for $p \geq 7$ except for the form corresponding to the Frey curve evaluated at~$(1,1)$\footnote{This last computation takes more than 4 days, with about 3,5 days for the implementation of the space of forms of level~$N_2 = 2^4 w^2$. For the readers interested in checking this result more quickly, we provide online the coefficients of the corresponding newforms for prime ideals of norm~$\leq 200$ (\cite{programs}).}.

Under the assumption $3\mid a+b$, Lemma~\ref{L:Econductors} applies for $p=5$ and  the computations of this proof also, so the last statement follows.
\end{proof}

\begin{remark}
The form $g$ in Proposition~\ref{P:iso2} cannot be eliminated for the exponent $p = 7$, even using `many' auxiliary primes $q \not= 2, 13$. This failure appears to have the following explanation.

Let $\sqrt{2} + 3 \in \fp_7$ and $\sqrt{2} + 4 \in \fp_7'$ be the two primes above~$7$ in $\Q(\sqrt{2})$. By comparing traces of Frobenius mod~$\fp_7'$, we promptly check that $\rhobar_{E,7} \not\cong \rhobar_{g,\fp_7'}$. For the prime~$\fp_7$, the trace of $\rhobar_{g,\fp_7}$ and $\rhobar_{E_{1,-1},7}$ at Frobenius elements for primes~$\fq$ in $\Q(\sqrt{13})$ of norm up to $5000$ are the same, which suggests that $\rhobar_{g,\fp_7} \cong \rhobar_{E_{1,-1},7}$. %If we could show this congruence, then the form~\(g\) can be removed and the proof of Theorem~\ref{T:4nmidaplusb} below also holds for~\(p = 7\).

However, to actually have this conclusion, we need to compare traces up to a `Sturm bound'~\cite{BurPac}, which unfortunately turn out to be too large to be computationally feasible.

From the proof of Theorem~\ref{T:4nmidaplusb} below we will see that the possibility of having a congruence with~$g$ as in Proposition~\ref{P:iso2} is the only obstruction for that theorem to hold also for~$p=7$. Therefore, if we could indeed show $\rhobar_{g,\fp_7} \cong \rhobar_{E_{1,-1},7}$, this means $g$ is not problematic as it contains exactly the same mod~$7$ information as  $E_{1,-1}$ which is already part of the conclusions of Proposition~\ref{P:iso2}. Alternatively, if we can show the mod~$7$ congruence of the Frey curve~$E$ with~$g$ is impossible (under the hypothesis contrary to the conclusions of Theorem~\ref{T:4nmidaplusb}), this would also give us Theorem~$\ref{T:4nmidaplusb}$ for $p=7$. Doing this requires new methods. In the paper \cite{BCDDF} of the authors with Lassina Demb\'el\'e, we address this problem which ultimately leads to the resolution of~\eqref{E:1313p} also for $p=7$.

\end{remark}

\begin{proof}[Proof of Theorem~$\ref{T:4nmidaplusb}$] 
Suppose $(a,b,c)$ is a non-trivial
primitive solution to \eqref{E:1313p}
with $p \ge 5$ and $p \not= 7, 13$.
Write $E = E_{a,b}$. From Proposition~\ref{P:iso2}, we know that $\rhobar_{E,p} \cong \rhobar_{Z,p}$, where $Z$ is 
$E_{1,-1}$, $E_{1,0}$ or $E_{1,1}$. 

Let again $\fq_1$ and $\fq_2$ be the primes in $\Q(\sqrt{13})$ dividing~$3$ with $w+1\in\fq_1$. 
Both $\fq_i$ are primes of good reduction for $E$ and $Z$; since $3 \mid d$, we have $3 \mid a+b$ and $a_{\fq_1}(E) = -3$ (see the proof of Proposition~\ref{P:irredE13}).

On the other hand, for $Z= E_{1,0}$ or $Z = E_{1,1}$, we have $a_{\fq_1}(Z) = -1$. Therefore we have $a_{\fq_1}(E) \not\equiv a_{\fq_1}(Z) \pmod{ \fp}$ and we conclude that $\rhobar_{E,p}\cong \rhobar_{E_{1,-1},p}$. 

We now prove (A). Let $K^+$ be the maximal totally real subfield of $\Q(\zeta_{13})$ and $\pi$ denote the prime ideal in $K^+$ above 13. From \cite[Proposition~3.1]{DF2},
 when $13 \nmid a+b$ (or equivalently $13 \nmid c$), the curve 
$Z / K^+$ has good reduction at $\pi$ and $E/K^+$ has bad additive reduction. 
The conclusion follows from version 2 of image of inertia argument.

We now prove (B). Consider the field $M = \Q(\sqrt{13})(x,y)$, where~$(x, y)$ is a $3$-torsion point of $Z = E_{1,-1}$ whose coordinates satisfy:
\begin{align*}
& x^4 + (702 w - 3510)x^2 + (-32994 w + 135486)x + 410670w - 1560546 = 0 \\
%  & x^4 + (11232 w - 56160) x^2 + (-2111616 w + 8671104) x + 105131520 w   - 399499776  = 0, \\
& y^2 = x^3 + (351w - 1755)x - \frac{16497w - 67743}{2}.
 % & y^2 = x^3 + a_4(1,-1) x + a_6(1,-1). 
\end{align*}
The extension $M$ has degree $8$ over $\Q(\sqrt{13})$. Let $\fq'$ be the unique prime of $M$ of ramification index $8$ above the prime $2$ of $\Q(\sqrt{13})$. %Then $v_{\fq'}(\Delta(E)) = 32 \mathcolorbox{yellow}{\cancel{+ 8 v_2(a+b)} \le 36 = 12\cdot 4}$. 

Let~$a'$, $b'$ be coprime integers and consider the base change to~$M$ of~$E = E_{a,b}$ and~$E'=E_{a',b'}$. We have~$v_{\fq'}(\Delta(E)) = v_{\fq'}(\Delta(E')) = 32 \le 36 = 12\cdot 3$.

Write~$a_4$, $a_4'$, $a_6$, $a_6'$ for~$a_4(a,b)$, $a_4(a',b')$, $a_6(a,b)$, $a_6(a',b')$ respectively and recall that~$a_4$, $a_4'$ and~$a_6$, $a_6'$ are homogeneous polynomials of degree~$4$ and~$6$ with coefficients in the integer ring of~$\Q(\sqrt{13})$, respectively.

Suppose that the reduction type of $E'$ is either $II^*$ or $I_0^*$ and we have that $v_{\fq'}(a_4 - a'_4) \geq 4\cdot 3$ and $v_{\fq'}(a_6 - a_6') \geq 6\cdot 3$. By \cite[Lemma 2.1]{BeChDaYa} applied to~$E$ and~$E'$ over~$M$ with~$k = 3$, the reduction types of $E$ and $E'$ at $\fq'$ are the same, and hence the conductor exponents at $\fq'$ of $E$ and $E'$ are the same.  

Therefore, if $(a,b) \equiv (a',b') \pmod{2^3}$ holds, then the assumptions of the previous paragraph are satisfied and then the conductor exponent at $\fq'$ of $E_{a,b}$ is the same as that of $E_{a',b'}$, provided the reduction type of $E_{a',b'}$ is $II^*$ or $I_0^*$. Assuming $4 \nmid a+b$ and using \cite{programs}, it is thus shown that $E_{a,b}/M$ has conductor exponent $\ge 4$ at the prime of $M$ above $2$, whereas $Z/M$  has conductor exponent $2$ at the prime of $M$ above $2$. The conclusion follows from version~3 of the image of inertia argument. We note that the full $3$-division field of $Z$ has degree $48$ over $\Q(\sqrt{13})$, whereas our choice of $M$ has degree $8$ over $\Q(\sqrt{13})$, making the computation faster.
\end{proof}

\subsection{The modular method over the real cubic subfield of $\Q(\zeta_{13})$}
\label{S:irreducibilityF}

In \cite{F}, several Frey elliptic curves are attached to equation~\eqref{E:rrp}.
In particular, for $r=13$ one of them is $E_{a,b}$ from the previous section; in this 
section we will use another Frey elliptic curve adapted from a construction in {\it loc.\ cit.} defined over a cubic field.

Let $K^+$ be the maximal (degree 6) totally real subfield of $\Q(\zeta_{13})$
and write $K$ for its cubic subfield. Write $\zeta = \zeta_{13}$ and define
\[
 A_{x,y} = \alpha(x+y)^2, \quad
 B_{x,y} = \beta (x^2 + (\zeta +\zeta^{-1})xy+y^2), \quad 
 C_{x,y} = \gamma (x^2+(\zeta^8+\zeta^{-8})xy+y^2), 
\]
where
\[
 \alpha = \zeta^8 + \zeta^{-8} - \zeta - \zeta^{-1}, \quad
 \beta  = 2-\zeta^8 - \zeta^{-8}, \quad 
 \gamma = \zeta +\zeta^{-1}-2
\]
all have norm $13^2$. We note that $A_{x,y}$, $B_{x,y}$, $C_{x,y}$ are polynomials with coefficients in~$K^+$ satisfying
$A_{x,y} + B_{x,y} + C_{x,y} = 0$.

Let $a,b$ be coprime integers such that~$a+b\not=0$. We consider the Frey elliptic curve given by the following short Weierstrass equation
\[
 F_{a,b} \; : \; y^2 = x^3+a_4'(a,b)x+a_6'(a,b)
\]
where 
\begin{eqnarray*}
 a_4'(a,b) & =  & 3^3\cdot 13^2\left(A_{a,b}B_{a,b}+A_{a,b}C_{a,b}+B_{a,b}C_{a,b}\right)  \\
 a_6'(a,b) & = &  -3^3\cdot 13^3\left(2A_{a,b}^3+3A_{a,b}^2B_{a,b}-3A_{a,b}B_{a,b}^2-2B_{a,b}^3\right).
\end{eqnarray*}
This curve is (up to a rational isomorphism) the quadratic twist by~$13$ of the curve defined by equation~(13) with $(k_1,k_2)=(1,8)$ in~\cite{F}.

We then verify that $F_{a,b}$ is defined over~$K$. Its standard invariants are  given by the following identities~:
\begin{eqnarray*}
c_4(F_{a,b}) & = & -2^4\cdot3\cdot a_4'(a,b) = -2^4\cdot3^4\cdot 13^2\left(A_{a,b}B_{a,b}+A_{a,b}C_{a,b}+B_{a,b}C_{a,b}\right)\\
\Delta(F_{a,b}) & = & 2^{4} \cdot 3^{12} \cdot 13^6\left(A_{a,b}B_{a,b}C_{a,b}\right)^2 .
\end{eqnarray*}

We now determine the conductor of~$F_{a,b}$. For simplicity, let us write $F=F_{a,b}$ and~$N_F$ for its conductor.

\begin{lemma}\label{lem:bad_reduction_1313p_FI}
Let~$\fq$ be a prime ideal in~$K$ of residual characteristic~$\ell\neq2, 3,13$. If $F$ has bad reduction at~$\fq$ then~$\ell\mid a^{13}+b^{13}$. Furthermore, if $\ell\not\equiv1\pmod{13}$, then $F$ has bad reduction at~$\fq$ if and only if~$\ell\mid a+b$. Moreover, if $F$ has bad reduction at~$\fq$, we have
$$
\vv_{\fq}(c_4(F))=0\quad\text{and}\quad \vv_{\fq}(\Delta(F))=\delta\vv_\ell(a^{13}+b^{13}),
$$
where~$\delta=2$ or~$4$ according to whether~$\ell$ divides~$\phi_{13}(a,b)$ or~$a+b$ respectively; in particular, $F$ has bad multiplicative reduction at~$\fq$ and hence~$\vv_{\fq}(N_F)=1$.
\end{lemma}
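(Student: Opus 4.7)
The proof will mirror the corresponding lemma for $E_{a,b}$ (Lemma~\ref{lem:bad_reduction_1313p_EI}), adapted to the cubic base field~$K$. Write $\sigma$ for the non-trivial element of $\Gal(K^+/K)$, given by $\zeta_{13} \mapsto \zeta_{13}^5$. A direct computation yields $\sigma(\alpha) = -\alpha$, $\sigma(\beta) = -\gamma$, $\sigma(\gamma) = -\beta$, and moreover $\sigma$ swaps the two quadratic factors of $\phi_{13}$ appearing in $B_{a,b}$ and $C_{a,b}$. Consequently $\sigma(A) = -A$, $\sigma(BC) = BC$, and
\[
(ABC)^2 = \alpha^2 (\beta\gamma)^2 (a+b)^4 Q(a,b)^2 \in K,
\]
where $Q(a,b) := (a^2+(\zeta_{13}+\zeta_{13}^{-1})ab+b^2)(a^2+(\zeta_{13}^8+\zeta_{13}^{-8})ab+b^2)$ is one of the three quartic factors of $\phi_{13}(a,b)$ over~$K$. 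The key algebraic facts I will use are: (i) $\alpha^2(\beta\gamma)^2$ has norm a power of~$13$ (since $\alpha,\beta,\gamma$ all have norm $13^2$), hence is a unit at every prime $\fq\nmid 13$; (ii) primes $\ell\neq 13$ dividing $\phi_{13}(a,b)$ with $\gcd(a,b)=1$ satisfy $\ell\equiv 1\pmod{13}$, as in \cite[Section~2]{DF2}; (iii) the three quartic factors of $\phi_{13}$ over~$K$ are pairwise coprime away from~$13$; (iv) the identity $\phi_{13}(-y,y) = 13 y^{12}$, which implies that for $\ell\neq 13$ the integers $a+b$ and $\phi_{13}(a,b)$ are coprime at~$\ell$.

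First, I will handle the characterization of bad reduction. Since $\Delta(F) = 2^4\cdot 3^{12}\cdot 13^6 (ABC)^2$ and $\fq\nmid 2\cdot 3\cdot 13$, fact~(i) shows that bad reduction at~$\fq$ is equivalent to $\fq\mid (a+b)Q(a,b)$. Taking norms to~$\Q$, this forces $\ell\mid(a+b)\phi_{13}(a,b) = a^{13}+b^{13}$. For the partial converse: if $\ell\not\equiv 1\pmod{13}$, then by~(ii) we have $\ell\nmid\phi_{13}(a,b)$, so $\ell\mid a^{13}+b^{13}$ forces $\ell\mid a+b$, which gives $\fq\mid A_{a,b}$ and hence bad reduction.

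Next, I will verify $\vv_\fq(c_4(F)) = 0$ at a prime $\fq$ of bad reduction. Writing $c_4(F) = -2^4\cdot 3^4\cdot 13^2(BC - A^2)$ and assuming for contradiction that $\fq\mid c_4$, the relation $A+B+C=0$ combined with $\fq\mid ABC$ forces $\fq$ to divide two of $A, B, C$ simultaneously; expanding, this makes $\fq$ divide both a factor of $a+b$ and a factor of $\phi_{13}(a,b)$, contradicting~(iv) since $\ell\neq 13$. For the valuation of the discriminant, fact~(i) gives $\vv_\fq(\Delta(F)) = 4\vv_\ell(a+b) + 2\vv_\fq(Q(a,b))$. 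If $\ell\mid a+b$, then~(iv) yields $\vv_\fq(Q)=0$, so $\vv_\fq(\Delta) = 4\vv_\ell(a+b) = 4\vv_\ell(a^{13}+b^{13})$ and $\delta=4$. If instead $\ell\mid\phi_{13}(a,b)$ (so $\ell\equiv 1\pmod{13}$ and $\ell\nmid a+b$), then $\fq$ divides exactly one of the three quartic factors of $\phi_{13}$ over~$K$, necessarily $Q$, and by~(iii) we have $\vv_\fq(Q) = \vv_\fq(\phi_{13}(a,b)) = \vv_\ell(\phi_{13}(a,b))$; hence $\vv_\fq(\Delta) = 2\vv_\ell(a^{13}+b^{13})$ and $\delta=2$. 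Since $\vv_\fq(c_4)=0$ and $\vv_\fq(\Delta)>0$ with $\ell\neq 2,3$, the reduction is multiplicative and $\vv_\fq(N_F)=1$ by Tate's algorithm.

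The main technical point is the Galois-theoretic bookkeeping required to establish the factorization of $(ABC)^2$ over $K$: since $B,C\in K^+\setminus K$ their individual valuations are not directly meaningful at a prime of~$K$, and one must combine them via the $\sigma$-pairing (or work at a prime of~$K^+$ above~$\fq$) to identify $Q$ correctly as one of the three quartic factors of $\phi_{13}$ over~$K$ and then invoke~(iii).
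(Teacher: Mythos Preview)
Your proof is correct and follows essentially the same approach as the paper's: both rely on the pairwise coprimality (away from~$13$) of the factors $A,B,C$ and of $a+b$ with $\phi_{13}(a,b)$ to show $\fq\nmid c_4(F)$ and to compute $\vv_\fq(\Delta(F))$. The paper's version is terser, citing \cite[Section~2]{DF2} for these coprimality facts and writing directly $\vv_\fq((ABC)^2)=4\vv_\ell(a+b)+2\vv_\ell(\phi_{13}(a,b))$, whereas you unpack the same computation more explicitly via the $\sigma$-action on $K^+/K$ to identify the quartic factor~$Q$ and then invoke your facts (i)--(iv).
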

\begin{proof}
Recall that $A=A_{a,b}$, $B=B_{a,b}$, $C=C_{a,b}$ are coprime outside~$13$ as elements of~$\Q(\zeta)$.  Moreover $(ABC)^2$ divides $13(a+b)^2(a^{13}+b^{13})^2$ and the quotient is coprime to~$(ABC)^2$ away from~$13$ (see Section~$2$ of~\cite{DF2}). In particular, if $F$ has bad reduction at~$\fq$ then~$\ell$ divides~$ a^{13}+b^{13}$ and $\fq\nmid c_4(F)$. If $\ell\not\equiv1\pmod{13}$, the equivalence holds since primes dividing~$a^{13}+b^{13}$ not congruent to~$1$ modulo~$13$ automatically divide~$a+b$ and hence~$A$.  Moreover, in that case, we have 
\[
\vv_{\fq}(\Delta(F))=\vv_{\fq}\left((ABC)^2\right)=4\vv_{\ell}(a+b)+2\vv_{\ell}(\phi_{13}(a,b)).
\] 
The result then follows from the fact that $a+b$ and~$\phi_{13}(a,b)$ are coprime outside~$13$.
\end{proof}

We now determine the valuation of~$N_F$ at the unique prime ideals above $2$, $3$ and~$13$. The two former prime numbers are inert in~$K$ and we simply write~$2$ and~$3$ for the unique primes above them in~$K$. We denote by~$\fq_{13}$ the prime ideal above~$13$ in~$K$. 
\begin{lemma}\label{lem:bad_reduction_1313p_FII}
We have the following valuations~:
\begin{eqnarray*}
\vv_{\fq_{13}}(N_F)  & = &  \left\{
\begin{array}{ll}
1 & \text{if }13\mid a+b \\
2 & \text{if }13\nmid a+b \\
\end{array}
\right.; \\
\vv_3(N_F)  & = & \left\{
\begin{array}{ll}
0 & \text{if }3\nmid a+b \\
1 & \text{if }3\mid a+b
\end{array}
\right.; \\
\vv_2(N_F) &  =  & \left\{
\begin{array}{ll}
0 & \text{if }\vv_2(a+b)=2\\
1 & \text{if }\vv_2(a+b)\ge 3\\
3 & \text{if }ab\equiv 0\pmod{4}\\
4 & \text{if  $\vv_2(a+b)=1$ or $ab\equiv 2\pmod{4}$}\\
\end{array}
\right..
\end{eqnarray*}
\end{lemma}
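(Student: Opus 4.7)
The proof will follow the same blueprint as Lemmas~\ref{lem:bad_reduction_1313p_EI} and~\ref{lem:bad_reduction_55p_EII}: at each of the three primes $\fq\in\{\fq_{13},3,2\}$ the strategy is to compute $\vv_{\fq}(c_4(F))$ and $\vv_{\fq}(\Delta(F))$ from the formulas for $c_4(F)$ and $\Delta(F)$ stated above, using the $\fq$-adic information on $\alpha,\beta,\gamma$ and on $A_{a,b},B_{a,b},C_{a,b}$ recalled in~\cite[Sec.~2]{DF2}; to perform a change of variables $(x,y)\mapsto(u^2 x,u^3 y)$ to reach a $\fq$-minimal model when the given model is non-minimal; and to read off the reduction type and conductor exponent from Tate's algorithm as tabulated in~\cite{papado}.

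\textbf{At $\fq_{13}$.} Since $K$ is the cubic subfield of $\Q(\zeta_{13})$ and $13$ is totally ramified in $\Q(\zeta_{13})/\Q$, it remains totally ramified in $K/\Q$, so $\vv_{\fq_{13}}(13)=3$. Together with the norm relations $|\Norm(\alpha)|=|\Norm(\beta)|=|\Norm(\gamma)|=13^2$ and the divisibility properties of $A_{a,b}B_{a,b}C_{a,b}$ recalled from~\cite[Sec.~2]{DF2}, this pins down $\vv_{\fq_{13}}(c_4(F))$ and $\vv_{\fq_{13}}(\Delta(F))$ in each of the subcases $13\mid a+b$ and $13\nmid a+b$. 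After minimisation the former should give multiplicative reduction (conductor exponent $1$) and the latter additive reduction of conductor~$2$, as claimed.

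\textbf{At $3$.} The prime $3$ is inert in $K$ (its Frobenius has order $3$ in $(\Z/13)^*$ and does not lie in the unique subgroup of order $4$ cutting out $K$). The factors $3^4\mid c_4(F)$ and $3^{12}\mid\Delta(F)$ force a non-minimal model; the substitution $(x,y)\mapsto(9x,27y)$ minimises it and leaves $c_4$ and $\Delta$ with $3$-adic valuations determined by $A_{a,b}B_{a,b}+A_{a,b}C_{a,b}+B_{a,b}C_{a,b}$ and $(A_{a,b}B_{a,b}C_{a,b})^2$ respectively. If $3\nmid a+b$, a direct check shows $3\nmid A_{a,b}B_{a,b}C_{a,b}$ and we get good reduction; if $3\mid a+b$, only $A_{a,b}$ becomes $3$-divisible through its factor $(a+b)^2$, so only $\Delta$ acquires $3$-adic valuation, giving multiplicative reduction of conductor~$1$.

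\textbf{At $2$ (main obstacle).} The prime $2$ is also inert in $K$. The case split matches the four subcases of the lemma, according to $\vv_2(a+b)$ and the parities of $a,b$. For $\vv_2(a+b)\ge 2$ an appropriate substitution $(x,y)\mapsto(4x,8y)$ (or a higher power) minimises the model and yields directly good or multiplicative reduction. The hard part is the additive subcases $\vv_2(a+b)\le 1$ or $ab\equiv 2\pmod 4$: in residue characteristic $2$ the tables of~\cite{papado} do not apply directly to a short Weierstrass equation, so a preliminary translation $x\mapsto x+r$ with $r$ chosen modulo a small power of $2$ is needed (exactly as in the proof of Lemma~\ref{lem:bad_reduction_55p_EII}) in order to invoke Proposition~$1$ of \emph{loc.\ cit.} Running Tate's algorithm through the translated models, with case distinction dictated by the congruence class of $(a,b)$ modulo $4$ or $8$, should then produce the conductor exponents $3$ and $4$ as asserted.
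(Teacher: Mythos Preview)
Your plan is correct and mirrors the paper's approach: compute $\vv_\fq(c_4(F))$ and $\vv_\fq(\Delta(F))$ from the explicit formulas, minimise, and read off the reduction type via~\cite{papado}. The paper carries this out exactly as you outline at $\fq_{13}$ and at $3$.

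At the prime $2$ the paper's execution differs slightly from what you sketch, and it is worth knowing the shortcut. For $\vv_2(a+b)\le 1$ the paper does not perform a preliminary translation and then run through all residue classes of $(a,b)$ modulo $8$. Instead, working directly with the short model $y^2=x^3+a_4'(a,b)x+a_6'(a,b)$, it computes $a_4'(a,b)$ and $a_6'(a,b)$ modulo small powers of~$2$ and applies Propositions~1 and~3 of~\cite{papado} with an explicit choice of~$r$. The key observation is that the quantity $b_8+3rb_6+3r^2b_4+r^3b_2+3r^4$ appearing in those propositions depends only on $a_4'(a,b)\pmod{8}$ and $a_6'(a,b)\pmod{4}$, and these congruence classes are \emph{independent of $(a,b)$} within each of the subcases $ab\equiv 0\pmod 4$, $ab\equiv 2\pmod 4$, $\vv_2(a+b)=1$. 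This lets the paper reduce each subcase to a single representative; in particular for $\vv_2(a+b)=1$ it simply invokes a {\tt Magma} conductor computation for $F_{1,1}$. Your proposed enumeration over congruence classes mod $8$ would of course also succeed, just with more bookkeeping.
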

\begin{proof} For simplicity, write $A=A_{a,b}$, $B=B_{a,b}$ and~$C=C_{a,b}$. Let us denote by~$\pi_{13}$ the unique prime ideal in~$\Q(\zeta)$ above~$13$. We first compute the valuation at~$\pi_{13}$ of~$c_4(F)$ and~$\Delta(F)$. Using the equalities $AB=\alpha\beta(a+b)^2\left((a+b)^2+\gamma ab\right)$ and $AC=\alpha\gamma(a+b)^2\left((a+b)^2-\beta ab\right)$ we obtain
\begin{equation*}
\vv_{\pi_{13}}(AC)=\vv_{\pi_{13}}(AB) = 24\vv_{13}(a+b)+\left\{\begin{array}{ll}
					6 & \text{if $13\mid a+b$} \\
					4 & \text{if $13\nmid a+b$}
					\end{array}
					\right..
\end{equation*}
Similarly, using $BC=\beta\gamma\left((a+b)^2+\gamma ab\right)\left((a+b)^2-\beta ab\right)$, we have
\begin{equation*}
\vv_{\pi_{13}}(BC)= \left\{\begin{array}{ll}
					8 & \text{if $13\mid a+b$} \\
					4 & \text{if $13\nmid a+b$}
					\end{array}
					\right..
\end{equation*}
Therefore it follows that we have
\begin{equation*}
\left(\vv_{\fq_{13}}(c_4(F)),\vv_{\fq_{13}}(\Delta(F))\right)=
					\left\{\begin{array}{ll}
					\left(8,23 +12 \vv_{13}(a+b)\right) & \text{if $13\mid a+b$} \\
					\left(\ge 7,21\right) & \text{if $13\nmid a+b$}
					\end{array}
					\right..
\end{equation*}
In particular, the defining model of~$F$ is not minimal at~$\fq_{13}$ (\cite[Tableau~I]{papado}). After a change of variables, we obtain that if $13$ divides~$a+b$, then $F$ has bad multiplicative reduction of type~$I_{\nu}$ with $\nu=-1+12\vv_{13}(a+b)$. Therefore we have~$\vv_{\fq_{13}}(N_F)=1$. Otherwise, if $13$ divides~$a+b$, then $F$ has bad additive reduction at~$\fq_{13}$ and~$\vv_{\fq_{13}}(N_F)=2$.

We now deal with the prime ideal generated by~$3$. Neither~$B$ nor $C$ is divisible by~$3$. In particular, if $3$ does not divide~$a+b$, then $(\vv_3(c_4(F)),\vv_3(\Delta(F)))=(\ge 4,12)$ and the defining model of~$F$ is not minimal at~$3$ (\cite[Tableau~I]{papado}). After a change of variables, we obtain that $F$ has good reduction at~$3$; hence $\vv_3(N_F)=0$.  Otherwise, if $3$ divides~$a+b$, then $(\vv_3(c_4(F)),\vv_3(\Delta(F)))=(4,12+4\vv_3(a+b))$. Therefore, according to \emph{loc. cit.}, after a change of variables, we obtain that $F$ has bad multiplicative reduction of type~$I_{\nu}$ with $\nu=4\vv_3(a+b)$. Therefore we have~$\vv_3(N_F)=1$.

We finally compute the valuation at~$2$ of the conductor of~$F$. Neither~$B$ nor~$C$ is divisible by~$2$. Therefore, we have
\[
\vv_2(c_4(F))=4+\vv_2(AB+AC+BC)\quad\text{and}\quad \vv_2(\Delta(F))=4+4\vv_2(a+b).
\]
In particular, if $\vv_2(a+b)\ge 3$, then after a change of variables, we found that $F$ has bad multiplicative reduction at~$2$ of type~$I_\nu$ with $\nu=-8+4\vv_2(a+b)$; hence $\vv_2(N_F)=1$. Similarly, if $\vv_2(a+b)=2$, then $F$ has good reduction at~$2$ and $\vv_2(N_F)=0$.

It remains to deal with the case~$\vv_2(a+b)\le 1$. Assume first that $2$ does not divide~$a+b$. Then we have $ab\equiv 0\pmod{2}$ and $(\vv_2(c_4(F)),\vv_2(\Delta(F)))=(4,4)$. Therefore,  by~\cite[Tableau~IV]{papado}, we are in Case~$3$, $4$ or~$5$ of Tate's classification and $\vv_2(N_F)=4$, $3$ or~$2$ respectively. We have
\[
a_4'(a,b)\equiv -(\alpha\beta+\alpha\gamma+\beta\gamma+\alpha\beta\gamma ab)\pmod{4}
\]
and
\[
a_6'(a,b)\equiv 2\alpha^3+3\alpha^2\beta-3\alpha\beta^2-2\beta^3+3\alpha^2\beta\gamma ab\pmod{4}.
\]
In particular, we have $a_4'(a,b)\equiv \alpha\beta+\alpha\gamma+\beta\gamma\equiv r^2\pmod{2}$ and $a_6'(a,b)\equiv \alpha\beta\gamma\equiv t^2\pmod{2}$ with $r=t=\zeta+\zeta^{-1}+\zeta^2+\zeta^{-2}+\zeta^3+\zeta^{-3}+\zeta^5+\zeta^{-5}$. According to~\cite[Prop.~1]{papado}, one then verifies using the above congruences of~$a_4'(a,b)$ and~$a_6'(a,b)$ modulo~$4$ that we are in a Case~$\ge 4$ of Tate's classification if and only if $ab\equiv 0\pmod{4}$.  In that case, the congruence class (in the notation of \emph{loc. cit.}) of $b_8+3rb_6+3r^2b_4+r^3b_2+3r^4=-a_4'(a,b)^2+12ra_6'(a,b)+6r^2a_4'(a,b)+3r^4$ modulo~$2^3$ is independent of~$a$, $b$ such that $ab\equiv 0\pmod{4}$ (since then both $a_4'(a,b)\pmod{4}$ and~$a_6'(a,b)\pmod{2}$ only depend on~$\alpha$, $\beta$, and $\gamma$). One then verifies that it has valuation~$2$. According to~\cite[Prop.~1]{papado} we are in Case~$4$ of Tate's classification; hence $\vv_2(N_F)=3$.

Assume now that $\vv_2(a+b)=1$. Then we have $(\vv_2(c_4(F)),\vv_2(\Delta(F)))=(4,8)$ and  by~\cite[Tableau~IV]{papado}, we are in Case~$6$, $7$ or~$8$ of Tate's classification. We have
\[
a_4'(a,b)\equiv \beta\gamma(4\alpha-3\beta\gamma)\pmod{8}\quad\text{and}\quad a_6'(a,b)\equiv 2(\beta\gamma)^3\pmod{4}.
\]
Since the congruence class (in the notation of \emph{loc. cit.}) of $b_8+3rb_6+3r^2b_4+r^3b_2+3r^4=-a_4'(a,b)^2+12ra_6'(a,b)+6r^2a_4'(a,b)+3r^4$ modulo~$2^4$ only depends on  $a_4'(a,b)\pmod{8}$, $a_6'(a,b)\pmod{4}$, it is independent of~$a,b$ such that~$\vv_2(a+b)=1$. In particular, we can take $(a,b)=(1,1)$. Using {\tt Magma} we check that the elliptic curve~$F_{1,1}$ has conductor exponent~$4$ at~$2$. This means that for this specific curve we are in Case~$6$ of Tate's classification. In particular, the congruence equation in~\cite[Prop.~3(a)]{papado} has no solution for $(a,b)=(1,1)$ and hence for all $a,b$ with $\vv_2(a+b)=1$. It follows that $\vv_2(N_F)=4$ when $\vv_2(a+b)=1$.
\end{proof}

Write $E = E_{a,b}$ and $F = F_{a,b}$.
The following illustrates a fundamental difference between the Frey elliptic curves
$E$ and $F$. Note that irreducibility of $\rhobar_{E,p}$ followed by 
an application of \cite[Theorem~3]{FS} which makes crucial use of the presence of
explicit primes of good reduction of~$E$. This was guaranteed by the fact that all the primes 
not dividing $2 \cdot 13$ of bad reduction of $E$  
must have residual characteristic congruent
to 1 mod~$13$ (see Lemma~\ref{lem:bad_reduction_1313p_EI}). This is no longer the case for~$F$ 
due to the factor~$a+b$ in~$\Delta(F)$. Therefore, we can only apply 
\cite[Theorem~2]{FS} which guarantees that $\rhobar_{F,p}$ is irreducible
when $p > (1 + 3^{18})^2$. This bound is insufficient for our purposes.

We shall establish here a much better irreducibility result, dealing first with the case~$p=5$ in full generality.
\begin{lemma}	
The representation~$\rhobar_{F,5}$ is irreducible.
\label{L:irredFmod5}
\end{lemma}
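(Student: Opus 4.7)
The strategy is to argue by contradiction following the template of Proposition~\ref{irreducible5}: assume reducibility, control the ramification and order of the character summands via a Ray class group computation over $K$, and then derive a contradiction from known bounds on torsion subgroups of elliptic curves over small-degree number fields.

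Suppose $\rhobar_{F,5}$ is reducible. Writing $\rhobar_{F,5}^{ss} \cong \theta \oplus \theta'$ with $\theta \theta' = \chi_5$ the mod~$5$ cyclotomic character, I would first pin down the ramification of $\theta, \theta'$. At any prime $\fq$ of multiplicative reduction for $F$ not above $5$, the characters are unramified (this is the standard argument as in \cite[Lemma~6.3]{FreSikFLTsmall} applied to Tate curves). Combining this with Lemmas~\ref{lem:bad_reduction_1313p_FI} and~\ref{lem:bad_reduction_1313p_FII}, the only primes at which $\theta$ or $\theta'$ may ramify are the primes of $K$ above $2$, $13$, and $5$. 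In particular, the conductor of $\theta$ away from $5$ divides $2^4 \fq_{13}^2$.

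Next, I would use a case-analysis on the behaviour at (the unique prime above) $5$ in $K$. Since $\theta \theta' = \chi_5$ is tamely ramified at $5$ with the cyclotomic character having a known restriction to inertia, exactly one of $\theta, \theta'$ is unramified at $5$, while the other has conductor exponent $1$ there (cf.\ the first paragraph of the proof of Proposition~\ref{irreducible5}). Say $\theta$ is the unramified one; then its conductor divides a small explicit ideal $\mathfrak m$ supported only on the primes above $2$ and $13$. Since $K$ is totally real and cubic, and $\{-1\}$ together with the global units of $K$ generate a subgroup of finite index in the unit group of $(\mathcal O_K/\mathfrak m)^\times$, the Ray class group of $K$ of modulus $\mathfrak m \cdot \infty_1 \infty_2 \infty_3$ is finite of manageable size; a \texttt{Magma} computation (cf.\ \cite{programs}) would bound the order of $\theta$ by a small integer $n$.

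Finally, I would convert this into a torsion statement. Knowing $\theta$ has order dividing $n$, the appropriate quadratic twist $F \otimes \psi$ of $F$ (where $\psi$ is the character trivialising $\theta$ on the relevant quadratic part) would acquire a $K$-rational $5$-torsion point, or at worst a $5$-torsion point over a small extension of $K$. Combining this with the $2$-torsion structure of $F$ over $K^+$ produced by the factorisation $Y^2 = X(X-A_{a,b})(X+B_{a,b})$, I would obtain an elliptic curve over $K^+$ (a totally real sextic field) whose torsion subgroup has order divisible by $10$ or $20$, contradicting the Bruin--Najman classification \cite[Theorem~6]{BruNaj} or the refinement of Derickx--Kamienny--Stein--Stoll \cite{smallTorsion}. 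The main obstacle is the bookkeeping in the class field theory step: the characters $\theta$ may be twisted by characters of $5$-power order unramified outside $5$, so one must check that after all such twists the possible residual $5$-isogeny classes remain incompatible with the known torsion bounds.
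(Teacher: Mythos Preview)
Your proposed approach has a genuine gap at the final step. The torsion-bound endgame you describe does \emph{not} yield a contradiction when $p=5$: elliptic curves with a $K$-rational point of order~$10$ do exist over the cubic field $K$, and even curves with torsion subgroup $C_2 \times C_{10}$ exist over the sextic field $K^+$. Remark~\ref{R:example} exhibits exactly such an example. Hence neither \cite[Theorem~5 or~6]{BruNaj} nor \cite{smallTorsion} rules out the situation you arrive at, and the argument collapses. This is not a bookkeeping issue with the class-field-theory step; the obstruction is intrinsic to the method. (There is also a secondary issue: the ``exactly one of $\theta,\theta'$ ramifies at $5$'' step relies on the invariant $B$ of \cite{FS}, and here $B = 5^3\cdot 13$ is divisible by $5$, so the standard argument from \cite[Theorem~1]{FS} does not apply directly at $p=5$. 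But even if you patched this, the torsion contradiction would still fail.)

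The paper therefore takes a completely different route, specific to $p=5$ and exploiting the particular shape of the Frey curve's $j$-invariant. One writes $j_F - 1728 = 13\,(\nu G(a,b)/H(a,b))^2$ for explicit homogeneous polynomials $G,H$ and a constant $\nu\in K$. A $5$-isogeny on $F$ over $K$ forces $j_F - 1728$ to lie on the $X_0(5)$-parametrisation, producing a $K$-point on the elliptic curve $D : 13Y^2 = X(X^2+22X+125)$. One then checks that $D$ has rank~$1$ over both $\Q$ and $K$ with $D_{\mathrm{tors}}(K)=D_{\mathrm{tors}}(\Q)$, so $D(K)=D(\Q)$; this forces $j_F - 1728 \in \mathbb{P}^1(\Q)$. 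A direct computation with the Galois conjugates of $G,H$ shows this rationality condition has no solution with $a/b \in \mathbb{P}^1(\Q)$ except $a/b=-1$, which is excluded. The point is that this argument uses the \emph{specific} $j$-invariant of $F_{a,b}$, not merely the existence of a $5$-isogeny over $K$---and Remark~\ref{R:example} makes explicit that no argument oblivious to this specificity can succeed.
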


\begin{proof}
We proceed using explicit equations as in~\cite[Theorem~7]{Dahmen}.
Let $j_F$ denote the $j$-invariant of $F$. Then
\begin{align*}
  j_F - 1728 & = \eta G(a,b)^2/H(a,b)^2, \\
   & = 13 (\nu G(a,b)/H(a,b))^2,
\end{align*}
where $G, H$ are degree-$6$ homogeneous monic polynomials in two variables with coefficients in~$K$, and $\eta, \nu \in K$ \cite{programs}. If $\rhobar_{F,5}$ is reducible, that is, $F$ has a $5$-isogeny over $K$, then we must have that
\begin{equation*}
  j_F - 1728 = \frac{(t^2 + 4 s t - s^2)^2 (t^2 + 22 st + 125 s^2)}{s^5 t},
\end{equation*}
for some $(s,t) \in K\setminus\{0\}$, following the argument in~\cite{Dahmen}. Thus, we obtain a $K$-rational point~$(u,v)$ on the elliptic curve
\begin{equation*}
D : 13 Y^2 = (X^2+22X+125)X,
\end{equation*}
where
\begin{equation*}
  u = t/s , \qquad
  v  = \nu \frac{G(a,b)}{H(a,b)} \frac{u}{u^2 + 4 u - 1}.
\end{equation*}
The elliptic curve $D$ has rank $1$ over $K$ and over $\Q$ and $D_{\text{tors}}(K) = D_{\text{tors}}(\Q) \cong \Z/2\Z$ by~\cite{programs}. We claim that this implies $D(K) = D(\Q)$, i.e.\ every $K$-rational point of $D$ is in fact $\Q$-rational. Thus,
\begin{align*}
  \nu \frac{G(a,b)}{H(a,b)} \in \Q,
\end{align*}
which in turn implies that $j_F -1728 \in \mathbb{P}^1(\Q)$. Put $N=\eta G(a,b)^2$, $M=H(a,b)^2$ and let~$\sigma$ be a non-trivial automorphism of~$K$. Write $R = N\sigma(M)\sigma^2(M)=A_0(a,b)+A_1(a,b)z+A_2(a,b)z^2$ where $K=\Q(z)$ and $A_0,A_1,A_2$ are degree~$36$ homogeneous polynomials in two variables with coefficients in~\(\Q\). Since $j_F -1728 \in \mathbb{P}^1(\Q)$, then $R$ is rational and this implies that $A_1(x,1)$ and $A_2(x,1)$ must have a common root. It can be verified with {\tt Magma} \cite{programs} that this is not the case for $a/b \in \mathbb{P}^1(\Q)$, except for $a/b = -1$, as desired.

To complete the proof we prove the claim.
Recall that $D$ has rank $1$ over both $\Q$ and $K$
and $D_{\text{tors}}(K) = D_{tors}(\Q) \cong \Z/2\Z$.
Therefore, $D(\Q)$ is of finite index in $D(K)$ and
there is an integer $N > 1$ such that $[N] ( D(K) )\subset D(\Q)$.
Now let $P \in D(K)$. Thus $Q = N \cdot P \in D(\Q)$.
Let $\calF_Q$ denote the inverse image of $Q$ by $[N]$ and $\sigma \in G_\Q$. Thus $P$ and $\sigma(P)$ are in $\calF_Q$ because $D$ has a rational model. Further, since $K/\Q$ is Galois, we have $\sigma(P) \in D(K)$.
Therefore, $N \cdot (\sigma(P) - P) = Q - Q = 0$ so $\sigma(P)  = P + T$ with $T \in D(K)_{\tors} = D(\Q)_{\tors}$. Furthermore,
\[
\sigma^2 (P) = \sigma (P) + \sigma (T) = \sigma(P) + T = P + 2T = P
\]
where we used that the addition law is defined over $\Q$ because $D$ has a rational model. Finally, observe that $\sigma^2$ restricted to~$K$ is either the identity or a generator of $\Gal(K/\Q)$. We conclude that $P$ is invariant by the action of $\Gal(K/\Q)$, hence $P \in D(\Q)$.
\end{proof}
\begin{remark}
The following example shows that in the previous proof of Lemma~\ref{L:irredFmod5} it is essential 
to be working with $j$-invariants arising from the Frey elliptic curve $F$.
Consider the elliptic curve over~$\Q$ defined by
\[
 y^2 + (1+a)xy +ay = x^3 + ax^2, \quad a = \frac{-10933}{144}
\]
which has $10$-torsion over $\Q$ and acquires 
full $2$-torsion over $\Q(\sqrt{13})$. In particular, it also 
has $10$-torsion over $K$ and a $C_2 \times C_{10}$ torsion group over $K^+$.
\label{R:example}
\end{remark}

\begin{theorem} Assume~$p \geq 7$ and $p \neq 13$. If either $13\mid a+b$, or $13\nmid a+b$ and~$p\not=17,37$, then $\rhobar_{F,p}$ is irreducible.
\label{T:irreducibilityF}
\end{theorem}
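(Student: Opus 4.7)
The plan is to adapt the strategy of Proposition~\ref{irreducible5} and of the first paragraph of the proof of \cite[Theorem~1]{FS} to the Frey curve $F$ over the cubic field $K$, keeping close track of the extra additive primes arising from the factor $(a+b)$ in $\Delta(F)$. First I would assume for contradiction that $\rhobar_{F,p}$ is reducible, so that its semisimplification decomposes as $\theta\oplus\theta'$ with characters $\theta,\theta'\colon G_K\to\Fbar_p^\times$ satisfying $\theta\theta'=\chi_p$. By a standard argument (e.g.~\cite[Lemma~6.3]{FreSikFLTsmall}), both $\theta$ and $\theta'$ are unramified outside $p$ and the additive primes of $F$, and they share the same prime-to-$p$ conductor. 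From Lemma~\ref{lem:bad_reduction_1313p_FII} the additive primes of $F$ lie in $\{2,\fq_{13}\}$, the precise subset depending on the two cases of the theorem.

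Next, to control the ramification at $p$, I would compute the invariant $B$ of~\cite{FS} from a system of fundamental units of the cubic field $K$. The expectation is that $B$ has only small prime factors (of the form $2^a 3^b 7^c 13^d$), so that for $p\geq 7$, $p\neq 13$, reducibility forces exactly one of $\theta,\theta'$, say $\theta$, to be unramified at the primes above~$p$. The case $p=7$ is likely to require a supplementary comparison of traces of Frobenius of $F$ at an auxiliary prime of good reduction, much as in the proof of Proposition~\ref{P:irredE13}.

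I would then bound the conductor of the unramified-at-$p$ character $\theta$. Since $\theta|_{I_\ell}$ is tame for $\ell\neq p$ (as $p\neq 2,3$), the conductor exponents of $\theta$ at $2$ and $\fq_{13}$ are bounded by the corresponding exponents of $\rhobar_{F,p}$ given in Lemma~\ref{lem:bad_reduction_1313p_FII}. I would exclude level lowering at $2$ by invoking \cite[Theorem~1.5]{Jarv2} (applicable since $\Norm(2)=8$), and then compute the ray class group of $K$ of modulus $2^{s}\fq_{13}^{t}\prod_{i}\infty_i$ with $s\leq 4$ and $t\in\{0,2\}$ according to the two cases of the theorem. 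I expect its order to be coprime to $p$ for all $p\geq 7$, $p\neq 13$, in the case $13\mid a+b$, and coprime to $p$ outside the exceptional primes $p\in\{17,37\}$ in the case $13\nmid a+b$; these two exceptional primes should enter precisely through the $p$-part of this ray class group.

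Finally, once $\theta$ is shown to have order bounded by a small integer $N$ independent of $(a,b)$, a quadratic twist of $F$ by a suitable character will acquire a $K$-rational cyclic subgroup of order $p$, yielding a non-cuspidal $K$-rational point on $X_0(p)$. The main obstacle is ruling out such points over the cubic field $K$: one invokes the classification of degree-three points on $X_0(p)$ (see \cite{smallTorsion} and related works of Derickx–Najman–Siksek) to exclude this for $p\geq 7$ outside the advertised list, thereby completing the contradiction. As always with this type of argument, the delicate step is the combined ray class group and isogeny classification computation, since this is what determines the precise list of exceptional primes in the theorem.
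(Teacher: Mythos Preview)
Your overall strategy is the paper's, but two steps are genuinely off.

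First, your conductor bounds are too coarse and the appeal to \cite{Jarv2} is misplaced (that theorem concerns level lowering for Hilbert modular forms, not conductors of characters). At an additive prime $\fq\nmid p$ the inertia image of $\rhobar_{F,p}$ has order prime to $p$, so $\rhobar_{F,p}|_{I_\fq}$ is already semisimple; combined with $\det\rhobar_{F,p}=\chi_p$ being unramified at $\fq$, this forces the conductor exponents of $\theta$ and $\theta'$ at $\fq$ to each \emph{equal} $\vv_\fq(N_F)/2$, not merely be bounded by $\vv_\fq(N_F)$. In particular reducibility excludes $\vv_2(N_F)=3$, and the conductor of $\theta$ divides $2^2\fq_{13}$ (with the $\fq_{13}$ factor present only when $13\nmid a+b$). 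The relevant ray class groups over $K$ are then $(\Z/2\Z)^3$, $\Z/4\Z$, and $\Z/4\Z\oplus(\Z/2\Z)^3$---all $2$-groups. There is no $17$- or $37$-torsion in them, so your explanation of where the exceptional primes come from is incorrect.

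Second, and this is the actual source of the set $\{17,37\}$: those ray class groups permit $\theta$ to have order $4$, not only $1$ or $2$, and order $4$ can occur only when $\theta$ ramifies at $\fq_{13}$ (hence only when $13\nmid a+b$). Your step ``a quadratic twist over $K$ produces a $K$-point on $X_0(p)$'' is valid only for $n\leq 2$; when $n=4$ no quadratic twist of $F$ over $K$ trivializes $\theta$. The paper observes that $\theta^2$ cuts out the quadratic extension $K^+/K$, so $\theta$ becomes quadratic after restriction to $K^+=\Q(\zeta_{13})^+$, and a twist over that \emph{sextic} field yields a $p$-torsion point there. It is the torsion classification over degree-$6$ fields from \cite{smallTorsion} that leaves $p\in\{17,37\}$ open. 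When $13\mid a+b$, $F$ is semistable at $\fq_{13}$, the $n=4$ case is excluded, and no exceptions remain. Finally, in the $n\leq 2$ case the paper does not invoke $X_0(p)(K)$ directly: it uses that $F$ has a $K$-rational $2$-torsion point (since $F$ has full $2$-torsion over $K^+$), so that one only needs the sharper fact from \cite{BruNaj} that no elliptic curve over a cubic field has a torsion point of order $2p$ for $p\geq 7$.
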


\begin{proof} 
Suppose $\rhobar_{F,p}$ is reducible, that is,
\[ \rhobar_{F,p} \sim \begin{pmatrix} \theta & \star\\ 0 & \theta' \end{pmatrix} 
\quad \text{with} \quad \theta, \theta' : G_K \rightarrow \F_p^* 
\quad \text{satisfying} \quad \theta \theta' = \chi_p.\]

We note that $K = \Q(z)$, where $z^3 + z^2 - 4z + 1=0$. 
According to the notation of \cite[Theorem~1]{FS}
we set $\epsilon_1 = z$ and $\epsilon_2 = 1-z$,
observe that the unit group of $K$ is 
generated by $\{-1, \epsilon_1, \epsilon_2\}$ and compute $B = 5^3 \cdot 13$.
Thus from the first paragraph of the proof of \cite[Theorem~1]{FS} we 
conclude that for $p=11$ and $p \geq 17$ exactly one of $\theta$, $\theta'$ 
ramifies at $p$. Since $7$ is inert in $K$ and $F$ is semistable at 7, it follows from 
\cite[Lemma 1]{Kraus3} also that only one of $\theta$, $\theta'$ ramifies at $p=7$.

The characters $\theta$ and $\theta'$ ramify only at $p$ and additive primes of $F$;
the latter are $\fq_{13}$ and~$2$ when $13\nmid a+b$ and~$4\nmid a+b$ respectively (see Lemma~\ref{lem:bad_reduction_1313p_FII}). Furthermore, at an additive prime~$\fq$ both $\theta$, $\theta'$ have conductor 
exponent equal to $\vv_\fq(N_F)/2$; in particular, $\vv_2(N_F) \neq 3$.

Replacing $F$ by a $p$-isogenous curve we can assume $\theta$ is unramified at $p$.
Therefore, the possible conductors for~$\theta$ are $2^s\fq_{13}^t$ with $s\in\{0,2\}$ and~$t\in\{0,1\}$.
Let $\infty_1$, $\infty_2$ and $\infty_3$ be the real places of $K$. The field~$K$ has narrow class number~$1$ and the Ray class groups for the modulus $2^2 \infty_1 \infty_2 \infty_3$, $\fq_{13} \infty_1 \infty_2 \infty_3$
and $2^2 \fq_{13} \infty_1 \infty_2 \infty_3$ are isomorphic to
\[
\Z/2\Z \oplus \Z/2\Z \oplus \Z/2\Z,\quad \Z/4\Z \quad \text{ and } \quad \Z/4\Z \oplus \Z/2\Z \oplus \Z/2\Z \oplus \Z/2\Z,
\]
respectively; hence $\theta$ has order $n=1,2$ or 4. Moreover the case~$n=4$ only occurs when~$\theta$ ramifies at~$\fq_{13}$. In particular, if $13\mid a+b$, then $F$ is semistable at~$\fq_{13}$ and we have~$n=1$ or~$2$. 

Suppose $n = 1,2$. Thus either $F$ or a quadratic twist 
$F'$ of $F$ has a $p$-torsion point defined over $K$. 
Note that $F$ has full 2-torsion over $K^+$ which is a quadratic extension of $K$, hence it 
has at least one $2$-torsion point over $K$ (namely the point with $x$-coordinate $-3\cdot 13(A_{a,b}+2B_{a,b})$). Thus, the quadratic twist $F'$ also has
a $2$-torsion point over $K$ and we conclude that the $K$-torsion subgroup
of $F$ or $F'$ has order divisible by $2p$ with $p \geq 7$. 
From \cite[Theorem 5]{BruNaj}, we see that this is impossible. 

In particular, this proves the result for all primes $p \equiv 3 \pmod{4}$, because $n = 4$ does not
divide the order of $\F_p^*$.

Suppose $n=4$.  Since $K^+$ is the field fixed  by $\theta^2$ (note $\theta^2$ has conductor $\fq_{13}$) and thus $\theta$ has order 2 over $K^+$. After a quadratic twist, now over $K^+$, 
we conclude that $F$ has a $p$-torsion point defined over $K^+$. 
From \cite{smallTorsion} we see this is possible only for $p \leq 19$ and $p = 37$.
We conclude that $\rhobar_{F,p}$ is irreducible for all $p \geq 7$ such that
$p \neq 13, 17, 37$ (after discarding the primes $p \equiv 3 \pmod 4$). 
\end{proof}

From Lemma~\ref{lem:bad_reduction_1313p_FII} we know that $F_{a,b}$ is semistable at all primes dividing~$3$ in $K$. Thus, from \cite[Theorem~6.3]{F}, it follows that $F_{a,b}$ is modular. 

We now wish to use the Frey elliptic curve~$F_{a,b}$ to solve our Fermat equations. Suppose that there exists an integer~$c$ such that $(a,b,c)$ is a non-trivial primitive solution to \eqref{E:1313p} with $p\geq 5$. Write again~$F = F_{a,b}$.

From the conductor computations coming from Lemmas~\ref{lem:bad_reduction_1313p_FI} and~\ref{lem:bad_reduction_1313p_FII}, irreducibility results from Lemma~\ref{L:irredFmod5} and Theorem~\ref{T:irreducibilityF} and level lowering again, we obtain the following lemma.
\begin{lemma}
Assume~$p\ge5$, $p\not=13$. If~$13\nmid a+b$, assume further~$p\not=17, 37$. 
Then, there exist a prime~$\fp$ in~$\Qbar$ above~$p$ such that
\begin{equation}
  \rhobar_{F,p} \cong \rhobar_{f,\fp}, 
  \label{E:Fiso}
\end{equation}
where $f$ is a Hilbert newform over $K$ of parallel weight $2$, trivial character 
and level 
\[
N_f = 2^{\alpha_2}\cdot 3^{\alpha_3}\cdot \fq_{13}^{\alpha_{13}}.
\]
Here, $\alpha_2\in\{0,1,3,4\}$, $\alpha_3\in\{0,1\}$, $\alpha_{13}\in\{1,2\}$ is the valuation (computed in Lemma~\ref{lem:bad_reduction_1313p_FII}) at~$2$, $3$ and~$\fq_{13}$ of~$N_F$ respectively.

\label{L:Fconductors}
\end{lemma}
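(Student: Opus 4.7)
The plan is to carry out the standard level-lowering step of the modular method for the Frey curve $F=F_{a,b}$. The three prerequisites are all in place: modularity of $F$ is recorded immediately before the lemma via \cite[Theorem~6.3]{F}; irreducibility of $\rhobar_{F,p}$ comes from Lemma~\ref{L:irredFmod5} when $p=5$ and from Theorem~\ref{T:irreducibilityF} when $p\ge 7$, $p\neq 13$ (with the extra exclusion $p\neq 17,37$ when $13\nmid a+b$); and the conductor $N_F$ is completely described in Lemmas~\ref{lem:bad_reduction_1313p_FI} and~\ref{lem:bad_reduction_1313p_FII}.

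The one input specific to the equation $a^{13}+b^{13}=3z^p$ is the following. At every prime $\fq\mid N_F$ of residual characteristic $\ell\notin\{2,3,13\}$, Lemma~\ref{lem:bad_reduction_1313p_FI} says $F$ has multiplicative reduction at $\fq$ with $\vv_\fq(\Delta(F))=\delta\vv_\ell(a^{13}+b^{13})$ for some $\delta\in\{2,4\}$. Since $\ell\neq 3$, we get $\vv_\ell(a^{13}+b^{13})=\vv_\ell(3z^p)=p\vv_\ell(z)$, so $p\mid \vv_\fq(\Delta(F))$. This is precisely the condition under which Ribet-style level lowering drops $\fq$ from the predicted level. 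Applying the level-lowering theorems for Hilbert modular forms of Fujiwara, Jarvis and Rajaei (\cite{Fuj}, \cite{Jarv}, \cite{Raj}) then produces an isomorphism $\rhobar_{F,p}\cong\rhobar_{f,\fp}$ with $f$ of parallel weight $2$, trivial character and level $N_f=2^{\alpha_2}3^{\alpha_3}\fq_{13}^{\alpha_{13}}$, the exponents being bounded above by those of $N_F$ at the three remaining primes.

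The main bookkeeping obstacle is to confirm that these upper bounds are attained, i.e., that no further lowering occurs at $2$, $3$, or $\fq_{13}$. When $F$ has multiplicative reduction at one of these primes, combining the formulas for $\vv_\fq(\Delta(F))$ from the proof of Lemma~\ref{lem:bad_reduction_1313p_FII} with the equation $a^{13}+b^{13}=3z^p$ (and, at $\fq_{13}$, the cyclotomic identity $\vv_{13}(a^{13}+b^{13})=\vv_{13}(a+b)+1$ when $13\mid a+b$) yields $\vv_\fq(\Delta(F))\equiv -8,\,4,\,-13\pmod p$ at $\fq=2,\,3,\,\fq_{13}$ respectively, none of which vanishes modulo $p$ for $p\ge 5$, $p\neq 13$. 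At an additive prime, the inertia image of $\rhobar_{F,p}$ has the same order as that of the $\ell$-adic inertia image for any $\ell\neq p$, since the latter factors through a finite group whose order involves only the primes $2$ and $3$ and is therefore coprime to $p\ge 5$; this preserves the conductor exponent. Combined with the level-lowering step above, this yields the lemma as stated.
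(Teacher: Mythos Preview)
Your proof is correct and follows the same approach as the paper's one-sentence justification (modularity, irreducibility, conductor computations, then level lowering for Hilbert modular forms). You go further than the paper by explicitly verifying that no additional lowering occurs at $2$, $3$, and $\fq_{13}$---via the congruences $\vv_\fq(\Delta_{\min})\equiv -8,\,4,\,-13\pmod p$ in the multiplicative cases and the prime-to-$p$ inertia argument in the additive cases---a detail the paper leaves implicit but which is needed for the precise level claimed.
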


We now comment on the sizes of the spaces occuring
in Lemma~\ref{L:Fconductors}. 
With {\tt Magma}, we compute respectively the dimensions 
of the cuspidal and its new subspace at each level of the form $2^s\cdot 3\cdot\fq_{13}^t$ with $s\in\{0,1,3,4\}$ and~$t\in\{1,2\}$. For~$t=1$ and $t=2$, we obtain:
\[
\begin{array}{lcl}
 s=0\colon\quad 33, 27;    &\quad \text{and} \quad &  s=0\colon\quad 425, 334 ; \\
 s=1\colon\quad  295, 181 ;  & & s=1\colon\quad 3823, 2353 ; \\
 s=3\colon\quad  18817, 11466;  & & s=3\colon\quad 244609, 148101 ; \\ 
 s=4\colon\quad 150929, 91728  ; & & s=4\colon\quad 1956865, 1184820.\\
\end{array}
\]
We see that for $s=0$ and $s=1$, the computations of the newforms are within reach of current
implementations (indeed, we have already computed a larger space when studying the case of 
$r=5$), but for $s=3$ and $s=4$, the dimensions are totally out of reach. 
Using the multi-Frey technique, we are able to prove Theorem~\ref{T:main13} by
computing only in the case $(s,t)=(1,1)$ (that is in level~$2\cdot 3\cdot \fq_{13}$).
 
\subsection{Proof of Theorem~$\ref{T:main13}$}\label{S:Thm2}
The case $p = 2$ follows from \cite[Theorem 1.1]{BenSki} and the case $p = 3$ follows from \cite[Theorem 1.5]{BenVatYaz}. For exponent $p=13$ the result follows from Theorem~2 in \cite[Section~4.3]{Serre87}.

Suppose $(a,b,c)$ is a non-trivial primitive solution to \eqref{E:1313p} with $p \geq 5$,  $p \neq 7, 13$.
From Theorem~\ref{T:4nmidaplusb} we can assume that $4 \mid a+b$ and $ 13 \mid a+b$. Moreover, we have $\vv_2(a+b)=\vv_2(3c^p)\ge3$. Write $F = F_{a,b}$. Thanks to our assumptions, Lemma~\ref{L:Fconductors} applies with no further restrictions. In particular, we have that $\rhobar_{F,p} \cong \rhobar_{f,\fp}$, 
where $f$ is a Hilbert newform over~$K$ of parallel weight~$2$, trivial character and level $N_f = 2\cdot 3\cdot \fq_{13}$.
(Note that the multi-Frey technique is implicit in this step 
because the proof of Theorem~\ref{T:4nmidaplusb} uses the 
Frey elliptic curve $E$.)

The dimension of the new cuspidal subspace is~$181$. 
Using \cite{programs}, we compute all the $15$~newforms in this space and  bound the exponent using the primes in~$K$ above rational primes $q=5,7,11,17,31$ as usual using the norm of the difference between traces. This suffices to eliminate all but $4, 2$ forms corresponding 
to the exponents $p = 5, 11$, respectively.

%For these remaining forms, we use the following refined elimination technique. Suppose we have $\rhobar_{F,p} \cong \rhobar_{f,\fp}$ for some prime $\fp \mid p$ in $\Q_f$ (the field of coefficients of~$f$) where $p=5$ or $p=11$.

For the remaining forms, we use the following refined elimination technique~\cite{programs}. For each form, choosing a $q\not=2,3,13$ and $q\not\equiv1\pmod{13}$, we obtain that if $\rhobar_{F,p} \cong \rhobar_{f,\fp}$ for some prime $\fp \mid p$ in the field of coefficients of~$f$ (by Lemma \ref{lem:bad_reduction_1313p_FI}):
\begin{enumerate}[(i)]
\item\label{item:good} either $q\nmid a+b$ and, for all~$\fq$ above~$q$, we have $a_{\fq}(f)\equiv a_{\fq}(F_{a,b})\pmod{\fp}$;
\item\label{item:bad} or $q\mid a+b$ and, for all~$\fq$ above~$q$, we have $a_{\fq}(f)\equiv \pm(N(\fq)+1)\pmod{\fp}$.
\end{enumerate}
By computing~$a_{\fq}(F_{x,y})$ for each~$\fq\mid q$ and all~$x,y\in\{0,\dots,q-1\}$ not both zero, we eliminate each form by checking that neither of the above congruences hold for that form for all choices of $\fp \mid p$ in~$\Q_f$, where~$\Q_f$ denotes the field of coefficients of~$f$. Note this is computationally practical as long as we can factor~$p$ in~$\Q_f$. Using the auxiliary prime $q=5$ we deal with the two remaining forms for $p=11$ and with the auxiliary primes $q=31,47$ we deal with the remaining forms for $p=5$ except for the form~$f_{13}$ (i.e. the form in position 13 as given by {\tt Magma});  see~\cite{programs}.

To complete the proof we have to show that $\rhobar_{F,5} \cong \rhobar_{f_{13},\fp}$ is not possible for all $\fp \mid 5$ in~$\Q_{f_{13}}$, and we will do this using the multi-Frey technique one last time. Indeed, using the auxiliary primes $q=11,31$ and applying the same refined elimination as above we discard $\rhobar_{F,5} \cong \rhobar_{f_{13},\fp}$ except for one prime $\frak{p} = \frak{P} \mid 5$ in $\Q_{f_{13}}$ whose residue field is $\F_5$ (there are two other primes with residue field~$\F_5$ but do not cause problems). Next we use the auxiliary prime $q=19$. Note that 19 is inert in~$K$ and let $\fq_{19} = 19\calO_K$, where~$\calO_K$ denotes the integer ring of~$K$. We check that
$$
a_{\fq_{19}}(f_{13})^2 -(N(\fq_{19})+1)^2 \not\equiv 0 \pmod{\mathfrak{P}},
$$
which means that $f_{13}$ does not satisfy the level raising condition mod~$\mathfrak{P}$  at $\fq_{19}$. Therefore, if $\rhobar_{F,5} \cong \rhobar_{f_{13},\mathfrak{P}}$ holds then $F$ has good reduction at~$\fq_{19}$. Under this assumption, by comparing traces of Frobenius at~$\fq_{19}$, we conclude that $(a,b) \pmod{19}$ belongs to an explicit restricted list $\mathcal{L}$ of pairs mod~$19$.

We now go back to the Frey curve $E = E_{a,b}$ over $\Q(\sqrt{13})$ from Section \ref{S:overQ13}. From the last sentence of Proposition \ref{prop:irredE} we know that we have a mod 5 congruence between $E$ and one of $E_{1,0}$, $E_{1,1}$ or $E_{1,-1}$; from the second and third paragraphs in the proof of~Theorem~\ref{T:main13} we conclude that $\rhobar_{E,5} \cong \rhobar_{E_{1,-1},5}$. Note that 19 is inert in $\Q(\sqrt{13})$. Finally, using the fact that we know that~$(a,b) \pmod{19}$ belongs to $\calL$, by comparing traces of Frobenius at $q=19$ in the previous isomorphism, we obtain the desired contradiction.

%\bibliography{mybib}{}
%\bibliographystyle{plain}

\end{document}